\renewcommand{\eqref}[1]{\hyperref[#1]{(\ref{#1})}}
\newlist{enumlist}{enumerate}{1}
\setlist[enumlist]{labelindent=0cm,label=(\roman*),labelwidth=4.5ex,labelsep=0.5ex,leftmargin=5ex,align=left,topsep=0.5ex,itemsep=1ex,parsep=1ex}
\newlist{itemlist}{itemize}{1}
\setlist[itemlist]{labelindent=0cm,label=$\bullet$,labelwidth=3.5ex,labelsep=0.5ex,leftmargin=4ex,align=left,topsep=0.5ex,itemsep=1ex,parsep=1ex}
\numberwithin{equation}{section}
\theoremstyle{definition}\newtheorem{definition}{Definition}[section]
\newtheorem{remark}[definition]{Remark}
\newtheorem{proposition}[definition]{Proposition}
\newtheorem{lemma}[definition]{Lemma}
\newtheorem{theorem}[definition]{Theorem}
\newtheorem{corollary}[definition]{Corollary}
\renewcommand{\Re}{\operatorname{Re}}
\providecommand{\cZ}{\mathcal{Z}}
\providecommand{\cL}{\mathcal{L}}
\providecommand{\TLJ}{\operatorname{TLJ}}
\providecommand{\Gammahat}{\widehat{\Gamma}}
\providecommand{\bF}{\mathbb{F}}
\providecommand{\bFhat}{\widehat{\mathbb{F}}}
\providecommand{\Z}{\mathbb{Z}}
\providecommand{\GL}{\operatorname{GL}}
\providecommand{\cKfix}{\mathcal{K}^{\text{\rm fix}}}
\providecommand{\cB}{\mathcal{B}}
\providecommand{\be}{\beta}
\providecommand{\cG}{\mathcal{G}}
\providecommand{\ubar}{\overline{u}}
\providecommand{\N}{\mathbb{N}}
\providecommand{\SU}{\operatorname{SU}}
\providecommand{\bG}{\mathbb{G}}
\providecommand{\bH}{\mathbb{H}}
\providecommand{\bGhat}{\widehat{\mathbb{G}}}
\providecommand{\bHhat}{\widehat{\mathbb{H}}}
\providecommand{\otmin}{\otimes_{\text{\rm min}}}
\providecommand{\Ubar}{\overline{U}}
\providecommand{\om}{\omega}
\providecommand{\albar}{\overline{\alpha}}
\providecommand{\cV}{\mathcal{V}}
\providecommand{\cA}{\mathcal{A}}
\providecommand{\cAtil}{\widetilde{\mathcal{A}}}
\providecommand{\cK}{\mathcal{K}}
\providecommand{\eps}{\varepsilon}
\providecommand{\cE}{\mathcal{E}}
\providecommand{\ot}{\otimes}
\providecommand{\ovt}{\mathbin{\overline{\otimes}}}
\providecommand{\al}{\alpha}
\providecommand{\C}{\mathbb{C}}
\providecommand{\recht}{\rightarrow}
\providecommand{\cC}{\mathcal{C}}
\providecommand{\RR}{\mathbb{R}}
\providecommand{\NN}{\mathbb{N}}
\providecommand{\ZZ}{\mathbb{Z}}
\providecommand{\CC}{\mathbb{C}}
\providecommand{\onb}{\,\operatorname{onb}}
\providecommand{\counit}{\varrho}
\mathchardef\mhyphen="2D
\providecommand{\ind}[1]{\mathrm{ind\mhyphen}#1}
\providecommand{\Tr}{\operatorname{Tr}}
\providecommand{\Hom}{\operatorname{Hom}}
\providecommand{\Irr}{\operatorname{Irr}}
\providecommand{\mult}{\operatorname{mult}}
\providecommand{\id}{\operatorname{id}}
\providecommand{\Rep}{\operatorname{Rep}}
\providecommand{\Ext}{\operatorname{Ext}}
\providecommand{\Tor}{\operatorname{Tor}}
\providecommand{\Pol}{\operatorname{Pol}}
\providecommand{\CatKer}{\operatorname{Ker}}
\providecommand*{\diff}%
        {\@ifnextchar^{\DIfF}{\DIfF^{}}}
\def\DIfF^#1{%
        \mathop{\mathrm{\mathstrut d}}%
                \nolimits^{#1}\gobblespace
}
\def\gobblespace{%
        \futurelet\diffarg\opspace}
\def\opspace{%
        \let\DiffSpace\!%
        \ifx\diffarg(%
                \let\DiffSpace\relax
        \else
                \ifx\diffarg\[%
                        \let\DiffSpace\relax
                \else
                        \ifx\diffarg\{%
                                \let\DiffSpace\relax
                        \fi\fi\fi\DiffSpace} %
\newcommand{\roundedbox}[4]{
	\draw[rounded corners=5pt, thick, fill=white] ($#1+(-#2,-#2)+(-#3,0)$) rectangle ($#1+(#2,#2)+(#3,0)$);
	\coordinate (ZZa) at ($#1+(-#3,0)$);
	\coordinate (ZZb) at ($#1+(#3,0)$);
	\node at ($1/2*(ZZa)+1/2*(ZZb)$) {#4};
}
\begin{document}

\begin{center}
{\boldmath\LARGE\bf $L^2$-Betti numbers of rigid $C^*$-tensor categories and \vspace{0.5ex}\\ discrete quantum groups}

\bigskip

{\sc by David Kyed{\renewcommand\thefootnote{1}\footnote{\noindent Department of Mathematics and Computer Science, University of Southern Denmark, Odense (Denmark).\\ E-mail: dkyed@imada.sdu.dk. DK is supported by the Villum foundation grant 7423.}}, Sven Raum{\renewcommand\thefootnote{2}\footnote{\noindent EPFL SB SMA, Lausanne (Switzerland). E-mail: sven.raum@epfl.ch.}}, Stefaan Vaes{\renewcommand\thefootnote{3}\footnote{\noindent KU~Leuven, Department of Mathematics, Leuven (Belgium). E-mails: stefaan.vaes@kuleuven.be and matthias.valvekens@kuleuven.be. Supported by European Research Council Consolidator Grant 614195, and by long term structural funding~-- Methusalem grant of the Flemish Government.}} and Matthias Valvekens$^3$}
%
%\bigskip
%
%{\it Analysis \& PDE,} to appear.
%
\end{center}

\begin{abstract}
\noindent We compute the $L^2$-Betti numbers of the free $C^*$-tensor categories, which are the representation categories of the universal unitary quantum groups $A_u(F)$. We show that the $L^2$-Betti numbers of the dual of a compact quantum group $\bG$ are equal to the $L^2$-Betti numbers of the representation category $\Rep(\bG)$ and thus, in particular, invariant under monoidal equivalence. As an application, we obtain several new computations of $L^2$-Betti numbers for discrete quantum groups, including the quantum permutation groups and the free wreath product groups. Finally, we obtain upper bounds for the first $L^2$-Betti number in terms of a generating set of a $C^*$-tensor category.
\end{abstract}

\section{Introduction}

The framework of rigid $C^*$-tensor categories unifies a number of structures encoding various kinds of quantum symmetry, including standard invariants of Jones' subfactors, representation categories of compact quantum groups, in particular of $q$-deformations of compact simple Lie groups, and ordinary discrete groups. In several respects, rigid $C^*$-tensor categories are quantum analogues of discrete groups.

Using this point of view, the unitary representation theory for rigid $C^*$-tensor categories was introduced in \cite{pv-repr-subfactors}. This allowed to define typical geometric group theory properties like the Haagerup property and property~(T) intrinsically for standard invariants of subfactors and for rigid $C^*$-tensor categories. It was then proved in \cite{pv-repr-subfactors}, using \cite{arano,de-commer-freslon-yamashita}, that the Temperley-Lieb-Jones category $\Rep(\SU_q(2))$ has the Haagerup property, while $\Rep(\SU_q(3))$ has Kazhdan's property~(T). Equivalent formulations of the unitary representation theory of a rigid $C^*$-tensor category were found in \cite{neshveyev-yamashita,ghosh-jones} and are introduced below.

In \cite{psv-cohom}, a comprehensive (co)homology theory for standard invariants of subfactors and rigid $C^*$-tensor categories was introduced. Taking the appropriate Murray-von Neumann dimension for (co)homology with $L^2$-coefficients, this provides a definition of $L^2$-Betti numbers.

The first goal of this article is to compute the $L^2$-Betti numbers for the representation category $\cC$ of a free unitary quantum group $A_u(F)$. Here, $A_u(F)$ is the universal compact quantum group (in the sense of Woronowicz) generated by a single irreducible unitary representation. As a $C^*$-tensor category, $\cC$ is the free rigid $C^*$-tensor category generated by a single irreducible object $u$. The irreducible objects of $\cC$ are then labeled by all words in $u$ and $\ubar$ and can thus be identified with the free monoid $\NN * \NN$. We prove that $\beta_1^{(2)}(\cC)=1$ and that the other $L^2$-Betti numbers vanish.

For compact quantum groups $\bG$ of Kac type (a unimodularity assumption that is equivalent with the traciality of the Haar state), the $L^2$-Betti numbers $\beta_n^{(2)}(\bGhat)$ of the dual discrete quantum group $\bGhat$ were defined in \cite{kyed-quantum-l2hom}. The second main result of our paper is that these $L^2$-Betti numbers only depend on the representation category of $\bG$ and are given by $\beta_n^{(2)}(\Rep(\bG))$. This is surprising for two reasons. The $L^2$-Betti numbers $\beta_n^{(2)}(\Rep(\bG))$ are well defined for all compact quantum groups, without unimodularity assumption. And secondly, taking arbitrary coefficients instead of $L^2$-cohomology, there is no possible identification between the (co)homology of $\bGhat$ and $\Rep(\bG)$. Indeed, by \cite[Theorem 3.2]{collins-haertel-thom}, homology with trivial coefficients distinguishes between the quantum groups $A_o(k)$, but does not distinguish between their representation categories $\Rep(A_o(k))$ by Corollary \ref{cor.homology-trivial-coefficients} below. As an application, we compute the $L^2$-Betti numbers for several families of Kac type discrete quantum groups, including the duals of the quantum permutation groups $S_m^+$, the hyperoctahedral series $H_m^{s+}$ of \cite{banica-verngioux-reflection} and the free wreath product groups $\bH \wr_* \bF$ of \cite{bichon-wreath}.

One of the equivalent definitions in \cite{psv-cohom} for the (co)homology of a rigid $C^*$-tensor category $\cC$ is given by the Hochschild (co)homology of the associated tube algebra $\cA$ together with its counit $\counit : \cA \recht \C$ as the augmentation. In \cite{neshveyev-yamashita-remarks}, it is proved that when $\cC = \Rep(\bG)$ is the representation category of a compact quantum group $\bG$, then the tube algebra $\cA$ is strongly Morita equivalent with the Drinfeld double algebra of $\bG$. This is one of the main tools in our paper. As a side result, applying this to $\bG = \SU_q(2)$, so that $\cC$ becomes the Temperley-Lieb-Jones category TLJ, we can transfer the resolution of \cite{bichon-yetter-drinfeld} to a length 3 resolution for the tube algebra of TLJ, see Theorem \ref{thm.resolution-TLJ}. This allows us in particular to compute the (co)homology of TLJ with trivial coefficients, giving $\C$ in degree $0$ and degree $3$, and giving $0$ in all other degrees. This completes the computation in \cite[Proposition 9.13]{psv-cohom}, which went up to degree $2$, and this was also obtained in an unpublished note of Y.~Arano.

In the second part of this paper, we focus on the first $L^2$-Betti number of a rigid $C^*$-tensor category. For an infinite group $\Gamma$ generated by $n$ elements $g_1,\ldots,g_n$, it is well known that $\beta_1^{(2)}(\Gamma) \leq n-1$. The reason for this is that a $1$-cocycle on $\Gamma$ is completely determined by the values it takes on the generators $g_1,\ldots,g_n$. In Section \ref{sec.derivations-Cstar-tensor}, we explain how to realize the first cohomology of a rigid $C^*$-tensor category $\cC$ by a kind of derivations $D$ and prove that $D$ is indeed determined by its values on a generating set of irreducible objects. We then deduce an upper bound for $\beta_1^{(2)}(\cC)$ and show in Section \ref{sec.derivations-Au} that this upper bound is precisely reached for the universal (or free) category $\cC = \Rep(A_u(F))$.

{\bf Acknowledgment.} SV would like to thank the Isaac Newton Institute for Mathematical Sciences for support and hospitality during the programme {\it Operator Algebras: Subfactors and their Applications} when work on this paper was undertaken, supported by EPSRC Grant Number EP/K032208/1. SV also thanks Dimitri Shlyakhtenko for several helpful remarks.

\section{Preliminaries}

\subsection{\boldmath The tube algebra of a rigid $C^*$-tensor category}

Let $\mathcal{C}$ be a \textit{rigid $C^*$-tensor category}, i.e.\@ a $C^*$-tensor category with irreducible unit object $\eps\in\mathcal{C}$ such that every object $\alpha\in\mathcal{C}$ has a conjugate $\overline{\alpha}\in\mathcal{C}$. In particular, this implies that every object in $\mathcal{C}$ decomposes into finitely many irreducibles. The essential results on rigid $C^*$-tensor categories, which we will use without further reference, are covered in \cite[Chapter 2]{neshveyev-tuset}. For $\alpha,\beta\in\mathcal{C}$, we denote the (necessarily finite-dimensional) Banach space of morphisms $\alpha\to\beta$ by $(\beta,\alpha)$.

The set of isomorphism classes of irreducible objects of $\mathcal{C}$ will be denoted by $\Irr(\mathcal{C})$. In what follows, we do not distinguish between irreducible objects and their respective isomorphism classes and we fix representatives for all isomorphism classes once and for all. Additionally, we always identify $(\alpha,\alpha)$ with $\CC$ when $\alpha\in\Irr(\mathcal{C})$.
The \textit{multiplicity} of $\gamma$ in $\alpha$ when $\alpha\in\mathcal{C}$ and $\gamma\in\Irr(\mathcal{C})$ is defined by
\[
\mult(\gamma,\alpha)=\dim_{\CC} (\alpha,\gamma) \; .
\]
For $\alpha,\beta\in\mathcal{C}$, we write $\beta\prec\alpha$ whenever $\beta$ is isomorphic with a subobject of $\alpha$.
When there is no danger of confusion, we denote the tensor product of $\alpha$ and $\beta$ by $\alpha\beta$.

The rigidity assumption says that every object $\alpha\in\mathcal{C}$ admits a \textit{solution to the conjugate equations} \cite[\S~2.2]{neshveyev-tuset}, i.e.\@ an object $\overline{\alpha}\in\mathcal{C}$ and a pair of morphisms $s_\alpha\in(\alpha\overline{\alpha},\eps)$ and $t_\alpha\in(\overline{\alpha}\alpha,\eps)$ satisfying the relations
\[
(t_\alpha^*\otimes 1)(1 \otimes s_\alpha)=1 \text{\qquad and\qquad}(s_\alpha^*\otimes 1)(1 \otimes t_\alpha)=1 \; .
\]
A \textit{standard} solution for the conjugate equations for $\alpha\in\mathcal{C}$ additionally satisfies
\[
s_\alpha^*(T\otimes 1)s_\alpha =t_\alpha^*(1 \otimes T)t_\alpha
\]
for all $T\in (\alpha,\alpha)$.
The adjoint object $\albar$ and the standard solutions for the conjugate equations are unique up to unitary equivalence.
Throughout this article, we always fix standard solutions for all $\alpha\in\Irr(\mathcal{C})$, and extend by naturality to arbitrary objects $\alpha\in\mathcal{C}$ (see \cite[Definition~2.2.14]{neshveyev-tuset}).
The positive real number defined by $d(\alpha)=t_\alpha^*t_\alpha=s_\alpha^*s_\alpha$ is referred to as the \textit{quantum dimension} of $\alpha$.

These standard solutions also give rise to canonical tracial functionals $\Tr_\alpha$ on $(\alpha,\alpha)$ via
\[
\Tr_\alpha(T)=s_\alpha^*(T\otimes 1)s_\alpha=t_\alpha^*(1 \otimes T)t_\alpha \; .
\]
Note that these traces are typically not normalized, since $\Tr_\alpha(1)=d(\alpha)$.
It is sometimes convenient to work with the \textit{partial traces} defined by
\begin{align*}
&\Tr_\alpha\otimes\id: (\alpha\beta,\alpha\gamma)\to (\beta,\gamma):
T\mapsto (t_\alpha^*\otimes 1)(1 \otimes T)(t_\alpha\otimes 1) \; ,\\
&\id\otimes\Tr_\alpha: (\beta\alpha,\gamma\alpha)\to (\beta,\gamma):
T\mapsto (1 \otimes s_\alpha^*)(T\otimes 1)(1 \ot s_\alpha) \; .
\end{align*}
for $\alpha,\beta,\gamma\in\mathcal{C}$.
These satisfy $\Tr_\beta\circ (\Tr_\alpha\otimes\id)=\Tr_{\alpha\beta}=\Tr_\alpha\circ(\id\otimes\Tr_\beta)$.
For all $\alpha,\beta\in\mathcal{C}$, the categorical traces induce an inner product on $(\alpha,\beta)$, given by
\begin{equation}
\label{eqn:morphism-space-innprod}
\langle T,S\rangle = \Tr_\alpha(TS^*) = \Tr_\beta(S^*T).
\end{equation}
Throughout, the notation $\onb(\alpha,\beta)$ will refer to some choice of orthonormal basis of $(\alpha,\beta)$ with respect to this inner product.
Finally, the standard solutions of the conjugate equations induce the \emph{Frobenius reciprocity maps}, which are the unitary isomorphisms given by
\begin{equation}\label{eq.frobenius-reciprocity-maps}
\begin{split}
&(\alpha\beta,\gamma) \to (\alpha,\gamma\overline{\beta}): T\mapsto (1 \otimes s_\beta^*)(T\otimes 1) \; ,\\
&(\alpha\beta,\gamma)\to (\beta,\overline{\alpha}\gamma): T\mapsto (t_\alpha^*\otimes 1)(1 \otimes T) \; ,
\end{split}
\end{equation}
where $\alpha,\beta,\gamma\in\Irr(\mathcal{C})$.

The \textit{tube algebra} $\mathcal{A}$ of a rigid $C^*$-tensor category was first defined by Ocneanu in \cite{ocneanu-chirality} for categories with finitely many irreducibles.
For convenience, we recall some of the exposition from \cite{psv-cohom}.
The tube algebra is defined by the vector space direct sum
\[
\mathcal{A}=\bigoplus_{i,j,\alpha\in\Irr(\mathcal{C})} (i\alpha,\alpha j) \; .
\]
For general $\alpha\in\mathcal{C}$ and $i,j\in\Irr(\mathcal{C})$, a morphism $V\in (i\alpha,\alpha j)$ also defines an element of $\mathcal{A}$ via
\begin{equation}
\label{eqn:general-intertw-in-tube-algebra}
V\mapsto\sum_{\gamma\in\Irr(\mathcal{C})}d(\gamma)\sum_{W\in\onb(\alpha,\gamma)} (1 \otimes W^*)V(W\otimes 1).
\end{equation}
It should be noted that this map is generally not an embedding of $(i\alpha,\alpha j)$ into $\mathcal{A}$.
One easily checks that $\mathcal{A}$ is a $*$-algebra for the following operations
\begin{align*}
V\cdot W&=\delta_{j,j'}(V\otimes 1)(1 \otimes W)\in (i\alpha\beta,\alpha\beta k) \; ,\\
V^\# &= (t_\alpha^*\otimes 1 \otimes 1)(1 \otimes V^*\otimes 1)(1 \otimes 1 \otimes s_\alpha)\in (j\overline{\alpha},\overline{\alpha} i) \; ,
\end{align*}
where $V\in(i\alpha,\alpha j)$, $W\in (j'\beta, \beta k)$ and where the map in \eqref{eqn:general-intertw-in-tube-algebra} is used to view $V \cdot W$ as an element of $\cA$.
We follow the notational convention from \cite{psv-cohom} and explicitly denote the tube algebra operations by $\cdot$ and ${\ }^\#$, to avoid confusion with composition and adjunction of morphisms. It should be noted that $\mathcal{A}$ is not unital, unless $\Irr(\mathcal{C})$ is finite.

For $i\in\Irr(\mathcal{C})$, the identity map on $i$ is an element of $(i\eps,\eps i)$. So it can be considered as an element $p_i\in\mathcal{A}$.
As the notation suggests, $p_i$ is a self-adjoint idempotent in $\mathcal{A}$, and it is easy to see that $p_i\cdot V\cdot p_j=\delta_{ik}\delta_{jk'} V$ when $V\in (k\alpha,\alpha k')$.
The corner $p_i\cdot\mathcal{A}\cdot p_i$ is a unital $*$-algebra and the projections $p_i$, $i \in \Irr(\cC)$, serve as local units for $\cA$. In particular, for all purposes of homological algebra, we can work with $\cA$ as if it were a unital algebra.

The corner $p_\eps\cdot\mathcal{A}\cdot p_\eps$ is canonically isomorphic to the \textit{fusion $*$-algebra}~$\CC[\mathcal{C}]$.
This algebra is formed by taking the free vector space over $\Irr(\mathcal{C})$, and defining multiplication by the fusion rules, i.e.
\[
\alpha\cdot \beta = \sum_{\gamma\in\Irr(\mathcal{C})} \mult(\gamma,\alpha\otimes\beta)\gamma \; .
\]
The involution on $\CC[\mathcal{C}]$ is given by conjugation in $\mathcal{C}$.

The tube algebra comes with a faithful trace $\tau$ (see \cite[Proposition~3.10]{psv-cohom}).
For $V\in (i\alpha,\alpha j)$ with $i,j,\alpha\in\Irr(\mathcal{C})$, this trace is given by
\[
\tau(V)=\begin{cases}
\Tr_i(V) & i=j,\alpha=\eps\\
0 & \text{otherwise} \; .
\end{cases}
\]
In \cite{psv-cohom}, it is also shown that every involutive action of $\mathcal{A}$ on a pre-Hilbert space is automatically by bounded operators.
In particular, this allows us to define a von Neumann algebra $\mathcal{A}''$ by considering the faithful action of $\mathcal{A}$ on $L^2(\mathcal{A},\tau)$ by left multiplication, and then taking the bicommutant.
Additionally, the trace $\tau$ uniquely extends to a faithful normal semifinite trace on $\mathcal{A}''$.

For $i,j,\alpha\in\Irr(\mathcal{C})$, we now have two inner products on $(i\alpha,\alpha j)$, related by
\[
\Tr_{\alpha j}(W^* V) = d(\alpha) \, \tau(W^\#\cdot V)\; .
\]
We will however always work with the inner product given by $\Tr_{\al j}(W^* V)$, because it is compatible with the inner product in \eqref{eqn:morphism-space-innprod}, which is defined on all spaces of intertwiners and which makes the Frobenius reciprocity maps \eqref{eq.frobenius-reciprocity-maps} unitary.

\subsection{\boldmath Representation theory for rigid $C^*$-tensor categories}
\label{sec:representation-theory}

The unitary representation theory for rigid $C^*$-tensor categories was introduced in \cite{pv-repr-subfactors} and several equivalent formulations were found in \cite{neshveyev-yamashita,ghosh-jones,psv-cohom}. Following \cite{ghosh-jones}, a unitary representation of $\cC$ is given by a \emph{nondegenerate $*$-representation of the tube algebra} of $\mathcal{C}$. Following \cite{neshveyev-yamashita}, a unitary representation of $\cC$ is given by a \textit{unitary half braiding on an ind-object of} $\mathcal{C}$, i.e.\ an object in the \textit{unitary Drinfeld center} $\mathcal{Z}(\ind{\mathcal{C}})$. Here, the category $\ind{\mathcal{C}}$ may be thought of as a completion of $\mathcal{C}$ with infinite direct sums, giving rise to a (nonrigid) $C^*$-tensor category.
A \textit{unitary half braiding} on an ind-object $X\in\ind{\mathcal{C}}$ is a natural unitary isomorphism $\sigma_{-}:-\otimes X\to X\otimes -$ that satisfies the half braiding condition
\[
\sigma_{Y\otimes Z}=(\sigma_Y\otimes 1)(1 \otimes\sigma_Z)
\]
for all $Y,Z\in\ind{\mathcal{C}}$.
The collection of unitary half braidings on $\ind{\mathcal{C}}$ is denoted by $\mathcal{Z}(\ind{\mathcal{C}})$. We refer to \cite{neshveyev-yamashita} for rigorous definitions and basic properties of these objects.

By \cite[Proposition~3.14]{psv-cohom}, there is the following bijective correspondence between nondegenerate right Hilbert $\cA$-modules $\cK$ and unitary half braidings $(X,\sigma)$.
Given $(X,\sigma)\in\mathcal{Z}(\ind{\mathcal{C}})$, one defines $\mathcal{K}$ as the Hilbert space direct sum of the Hilbert spaces $(X,i)$, $i \in \Irr(\cC)$.
To turn $\mathcal{K}$ into a right $\mathcal{A}$-module, we let $V\in (i\alpha,\alpha j)$ act on a vector $\xi\in (X,i')$ by
\begin{equation}
\label{eqn:ind-obj-a-module-action}
\xi\cdot V = \delta_{ii'}(\Tr_\alpha\otimes\id)(\sigma_\alpha^*(\xi\otimes 1) V) \in (X,j) \; .
\end{equation}
In particular, we see that $\cK \cdot p_i = (X,i)$.

\subsection{\boldmath (Co)homology and $L^2$-Betti numbers for rigid $C^*$-tensor categories}

(Co)homology for rigid $C^*$-tensor categories was introduced in \cite{psv-cohom}. One of the equivalent ways to describe this (co)homology theory is as Hochschild (co)homology for the tube algebra $\cA$, see \cite[\S~7.2]{psv-cohom}. Concretely, we equip $\mathcal{A}$ with the augmentation (or counit)
\[
\counit:\mathcal{A}\to\CC: V\in (i\alpha,\alpha j)\mapsto \delta_{ij\eps} \Tr_\alpha(V) \; .
\]
Since $\counit$ is a $*$-homomorphism, we can view $\C$ as an $\cA$-module, which should be considered as the trivial representation of $\mathcal{C}$. Let $\mathcal{K}$ be a nondegenerate right Hilbert $\mathcal{A}$-module. We denote the (algebraic) linear span of $\mathcal{K}\cdot p_i$ for $i\in\Irr(\mathcal{C})$ by $\mathcal{K}^0$. Following \cite{psv-cohom}, the homology of $\mathcal{C}$ with coefficients in $\mathcal{K}^0$ is then defined by
\[
H_\bullet(\mathcal{C},\mathcal{K}^0)=\Tor^{\mathcal{A}}_\bullet(\mathcal{K}^0,\CC) \; .
\]
Similarly, the cohomology of $\mathcal{C}$ with coefficients in $\mathcal{K}^0$ is given by
\[
H^\bullet(\mathcal{C},\mathcal{K}^0)=\Ext_{\mathcal{A}}^\bullet(\CC,\mathcal{K}^0) \; .
\]
Note that, in the special case where $\mathcal{K}=L^2(\mathcal{A})$, the left $\mathcal{A}''$-module structure on $L^2(\mathcal{A})$ induces a natural left $\mathcal{A}''$-module structure on the (co)homology spaces. As in \cite{psv-cohom}, one then defines the $n$-th $L^2$-\textit{Betti number} of $\mathcal{C}$ as
\begin{equation}
\label{eqn:betti-number-definition}
\beta^{(2)}_n(\mathcal{C})=\dim_{\mathcal{A}''} H^n(\mathcal{C},L^2(\mathcal{A})^0)=\dim_{\mathcal{A}''} H_n(\mathcal{C},L^2(\mathcal{A})^0).
\end{equation}
where $\dim_{\mathcal{A}''}$ is the L\"{u}ck dimension with respect to the normal semifinite trace $\tau$ on $\mathcal{A}''$.

We refer to \cite[\S~6.1]{lueck-l2-invariants}, \cite[\S~A.4]{kpv13} and Remark \ref{rem.dimension-function} for the relevant definitions and properties of the dimension function $\dim_N$ on arbitrary $N$-modules, associated with a von Neumann algebra $N$ equipped with a faithful normal semifinite trace $\Tr$. Note that the second equality in \eqref{eqn:betti-number-definition} is nontrivial and was proved in \cite[Proposition~6.4]{psv-cohom}. When $\mathcal{C}$ is a discrete group, all these notions reduce to the familiar ones for groups.

\section{\boldmath A scaling formula for $L^2$-Betti numbers}

\subsection{Index of a subcategory}

\begin{definition}\label{def:general-index}
Let $\mathcal{C}$ be a rigid $C^*$-tensor category, and $\mathcal{C}_1\subset\mathcal{C}$ a full $C^*$-tensor subcategory of $\mathcal{C}$.
For an object $\alpha\in\mathcal{C}$, we define $[\alpha]_{\mathcal{C}_1}$ as the largest subobject of $\alpha$ that belongs to $\mathcal{C}_1$.
We denote the orthogonal projection of $\alpha$ onto $[\alpha]_{\mathcal{C}_1}$ by $P^\alpha_{\mathcal{C}_1} \in (\al,\al)$.
Fixing $\alpha\in\Irr(\mathcal{C})$, we define the $\mathcal{C}_1$-\textit{orbit} of $\alpha$ as
\[
\alpha\cdot\mathcal{C}_1=\{\beta\in\Irr(\mathcal{C})\mid \exists\gamma\in\mathcal{C}_1: \beta\prec\alpha\gamma\} \; .
\]
Note that in this definition, we can replace $\mathcal{C}_1$ by $\Irr(\mathcal{C}_1)$ without changing the orbit.
By Frobenius reciprocity, the orbits form a partition of $\Irr(\mathcal{C})$.
If $\alpha_1,\ldots,\alpha_k$ are representatives of $\mathcal{C}_1$-orbits, the \textit{index} of $\mathcal{C}_1\subset\mathcal{C}$ is defined as
\begin{equation}
\label{eqn:general-index}
{}[\mathcal{C}:\mathcal{C}_1]=\sum_{i=1}^k\frac{d(\alpha_i)^2}{d([\overline{\alpha_i}\alpha_i]_{\mathcal{C}_1})}.
\end{equation}
If the set of orbits is infinite, we put $[\mathcal{C}:\mathcal{C}_1]=\infty$.
\end{definition}

In Lemma \ref{lem:index-well-defined}, we show that the index is well defined. In Proposition \ref{prop:tube-algebra-projective-markov}, we prove that $[\cC : \cC_1]$ equals the Jones index for an associated inclusion of von Neumann algebra completions of tube algebras. In Proposition \ref{prop.index-transitive}, we prove the formula $[\cC : \cC_2] = [\cC : \cC_1] \, [\cC_1 : \cC_2]$ when $\cC_2 \subset \cC_1 \subset \cC$. So, the above definition of $[\cC : \cC_1]$ is indeed natural.

When $\mathcal{C}_1=\{\eps\}$, the index defined above coincides with the global index $d(\cC)$ of $\mathcal{C}$. When $\cC$ has only finitely many irreducible objects, we have $[\cC : \cC_1] = d(\cC) / d(\cC_1)$, see Proposition \ref{prop.index-transitive}.

Another extreme situation arises when
\begin{equation}
\label{eqn:unit-radical-definition}
N(\mathcal{C})=\{\gamma\in\Irr(\mathcal{C})\mid \exists \alpha_1,\ldots,\alpha_k\in\Irr(\mathcal{C}): \gamma\prec \alpha_1\cdots\alpha_k\overline{\alpha_k}\cdots\overline{\alpha_1}\}
\end{equation}
is a subset of $\Irr(\mathcal{C}_1)$. In this case, the index simply counts the number of orbits. In particular, we recover the index for subgroups when $\mathcal{C}_1\subset\mathcal{C}$ are both groups considered as $C^*$-tensor categories.

\begin{lemma} \label{lem:index-well-defined}
Let $\mathcal{C}$ be a rigid $C^*$-tensor category with full $C^*$-tensor subcategory $\mathcal{C}_1$.
Then, for $\alpha,\beta\in\Irr(\mathcal{C})$ with $\beta\in\alpha\cdot\mathcal{C}_1$, we have that
\begin{equation}
\label{eqn:index-term-orbit-invariant}
\frac{d([\overline{\alpha}\alpha]_{\mathcal{C}_1})}{d(\alpha)^2}
=\frac{d([\overline{\alpha}\beta]_{\mathcal{C}_1})}{d(\alpha)d(\beta)}
=\frac{d([\overline{\beta}\beta]_{\mathcal{C}_1})}{d(\beta)^2}.
\end{equation}
\end{lemma}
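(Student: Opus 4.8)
The plan is to rewrite all three quantities in terms of the single symmetric kernel
\[
K(\mu,\nu)=\frac{d([\overline{\mu}\nu]_{\mathcal{C}_1})}{d(\mu)\,d(\nu)},\qquad \mu,\nu\in\Irr(\mathcal{C}),
\]
so that \eqref{eqn:index-term-orbit-invariant} reads $K(\alpha,\alpha)=K(\alpha,\beta)=K(\beta,\beta)$. First I would record that $K$ is symmetric: since $\overline{\overline{\mu}\nu}\cong\overline{\nu}\mu$, conjugation preserves $d$ and maps $\mathcal{C}_1$-subobjects to $\mathcal{C}_1$-subobjects (as $\mathcal{C}_1$ is closed under conjugates), whence $d([\overline{\mu}\nu]_{\mathcal{C}_1})=d([\overline{\nu}\mu]_{\mathcal{C}_1})$. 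With this, the whole lemma follows from the single claim (P): \emph{for each fixed $\mu$, the function $\nu\mapsto K(\mu,\nu)$ is constant on the orbit $\mu\cdot\mathcal{C}_1$, with value $K(\mu,\mu)$.} Indeed, applying (P) with basepoint $\alpha$ gives $K(\alpha,\beta)=K(\alpha,\alpha)$, and applying it with basepoint $\beta$ (note $\alpha\in\beta\cdot\mathcal{C}_1$, since orbits partition $\Irr(\mathcal{C})$) gives $K(\beta,\alpha)=K(\beta,\beta)$; symmetry of $K$ closes the chain.

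The engine for (P) is a multiplicativity identity $(\star)$: for $\gamma\in\mathcal{C}_1$ and any $X\in\mathcal{C}$,
\[
d([X\gamma]_{\mathcal{C}_1})=d([X]_{\mathcal{C}_1})\,d(\gamma).
\]
To prove this I would first take $\delta,\gamma$ irreducible and observe the dichotomy $[\delta\gamma]_{\mathcal{C}_1}=\delta\gamma$ if $\delta\in\mathcal{C}_1$ and $[\delta\gamma]_{\mathcal{C}_1}=0$ otherwise: any irreducible $\rho\prec\delta\gamma$ with $\rho\in\mathcal{C}_1$ forces $\delta\prec\rho\overline{\gamma}\in\mathcal{C}_1$, hence $\delta\in\mathcal{C}_1$ as $\mathcal{C}_1$ is replete. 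Thus $[\delta\gamma]_{\mathcal{C}_1}=[\delta]_{\mathcal{C}_1}\gamma$ for irreducible $\delta$, and $(\star)$ follows by additivity after decomposing $X$ and $\gamma$ into irreducibles. Now fix $\mu$, write $O=\mu\cdot\mathcal{C}_1$, and set $b_\nu=d([\overline{\mu}\nu]_{\mathcal{C}_1})$ and $\Xi_\nu=d(\nu)$ for $\nu\in O$. Using $\overline{\mu}(\nu\gamma)=(\overline{\mu}\nu)\gamma$ together with $(\star)$, I get $\sum_{\nu'}\mult(\nu',\nu\gamma)\,b_{\nu'}=d([(\overline{\mu}\nu)\gamma]_{\mathcal{C}_1})=d(\gamma)\,b_\nu$, while $\sum_{\nu'}\mult(\nu',\nu\gamma)\,d(\nu')=d(\nu\gamma)=d(\gamma)\,\Xi_\nu$. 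In other words, both $b$ and $\Xi$ are strictly positive joint eigenvectors of the fusion adjacency operators $A_\gamma$ ($\gamma\in\Irr(\mathcal{C}_1)$), $(A_\gamma v)_\nu=\sum_{\nu'}\mult(\nu',\nu\gamma)v_{\nu'}$, all with eigenvalue $d(\gamma)$; equivalently $f:=b/\Xi$ is fixed by the associated stochastic operators on $O$, and claim (P) is exactly the assertion that $f$ is constant.

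It then remains to show that the positive joint eigenvector is unique up to scale, a Perron–Frobenius statement. The orbit $O$ is connected for the symmetric adjacency relation $\nu\sim\nu'\iff\nu'\prec\nu\gamma$ for some $\gamma\in\Irr(\mathcal{C}_1)$, precisely because it is a single $\mathcal{C}_1$-orbit. Note $f$ is bounded: $b_\nu\le d(\overline{\mu}\nu)=d(\mu)d(\nu)$ gives $f\le d(\mu)$. I would set $s=\sup_O f\le d(\mu)$ and consider $g:=s\,\Xi-b\ge 0$, which again satisfies $A_\gamma g=d(\gamma)g$. If $g$ vanishes at some $\nu_0$ (i.e. the supremum is attained), then $0=d(\gamma)g_{\nu_0}=\sum_{\nu'}\mult(\nu',\nu_0\gamma)g_{\nu'}$ forces $g_{\nu'}=0$ on all neighbours of $\nu_0$; propagating through the connected orbit yields $g\equiv 0$, hence $b=s\,\Xi$ and $f\equiv s$, proving (P).

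The main obstacle is the \emph{attainment of the supremum} on a possibly infinite orbit: the maximum-principle propagation above is immediate once the extremum is realized, but for infinite $O$ this is the delicate point, and it is where the hypotheses must really be used (single-step comparison only yields inequalities, not the pointwise equality, so no induction on distance from $\mu$ is available). In the pointed/group case each $A_\gamma$ is a permutation, so $f$ is constant by transitivity, and for orbits with finitely many elements standard Perron–Frobenius applies directly; the genuine work is the general infinite case. I would resolve it either by a limiting/normal-family argument exhibiting a maximizer of the bounded function $f$, or—more robustly—by invoking the positivity of the faithful trace on the tube algebra to identify the eigenvalue $d(\gamma)$ as the top of the spectrum and upgrade uniqueness of the positive eigenvector to the relative statement; this is the relative analogue of the uniqueness of the quantum (Frobenius–Perron) dimension as the only positive dimension function, and it foreshadows the Jones-index interpretation recorded later in Proposition \ref{prop:tube-algebra-projective-markov}.
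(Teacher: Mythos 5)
Your preparatory steps are all correct: the symmetry of $K$, the reduction of \eqref{eqn:index-term-orbit-invariant} to claim (P), the dichotomy $[\delta\gamma]_{\cC_1}\in\{\delta\gamma,0\}$, the multiplicativity $(\star)$, and the two eigenvector identities. (Your $(\star)$ is exactly the trace of the identity $P^{\overline{\alpha}\alpha\gamma}_{\cC_1}=P^{\overline{\alpha}\alpha}_{\cC_1}\otimes 1$ on which the paper's proof rests.) But the gap you flag yourself is fatal as the argument stands: the maximum principle needs the supremum $s=\sup_O f$ to be \emph{attained}, and neither proposed repair can deliver this on an infinite orbit. A bounded function on a discrete infinite set need not attain its supremum, and there is no compactness or equicontinuity for a ``limiting/normal-family'' argument to act on. The spectral repair fails for a structural reason: $b$ and $\Xi$ are not square-summable, and $d(\gamma)$ is in general \emph{not} in the $\ell^2$-spectrum of $A_\gamma$ at all --- already for the Temperley--Lieb subcategory generated by $\gamma$ with $d(\gamma)>2$, the orbit graph is a half-line whose adjacency operator has $\ell^2$-norm $2<d(\gamma)$; equality of these two numbers is a Kesten-type amenability condition that fails precisely for the free categories this paper cares about. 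So the trace on the tube algebra cannot identify $d(\gamma)$ as ``the top of the spectrum''. Worse, the abstract Perron--Frobenius statement you reduce to (a positive eigenfunction with eigenvalue $d(\gamma)$ on a connected infinite graph is unique up to scale) is simply false: on a $(q+1)$-regular tree, fixing an end with horofunction $h$, the nonconstant positive function $x\mapsto q^{-h(x)}$ is an adjacency eigenfunction with eigenvalue $q+1$, just like the constant function. Uniqueness therefore cannot follow from connectedness and positivity alone; it would have to exploit finer structure of fusion orbits, which your argument nowhere does.

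The fix is to postpone taking dimensions, which is how the paper argues. By Schur's lemma, $(\Tr_{\overline{\mu}}\otimes\id)(P^{\overline{\mu}\nu}_{\cC_1})\in(\nu,\nu)$ equals the scalar $f(\nu)=d([\overline{\mu}\nu]_{\cC_1})/d(\nu)$ times $1_\nu$ for every $\nu\in\Irr(\cC)$. The morphism-level form of your $(\star)$, namely $P^{\overline{\mu}\nu\gamma}_{\cC_1}=P^{\overline{\mu}\nu}_{\cC_1}\otimes 1$ for $\gamma\in\cC_1$, then says that $(\Tr_{\overline{\mu}}\otimes\id)(P^{\overline{\mu}\nu\gamma}_{\cC_1})$ is the \emph{scalar} $f(\nu)1_{\nu\gamma}$, not merely an endomorphism whose trace is $d(\gamma)d(\nu)f(\nu)$. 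Hence, for any isometry $W:\nu'\recht\nu\gamma$ with $\nu'$ irreducible, naturality of the projections and the module property of the partial trace give
\begin{equation*}
f(\nu')\,W=(\Tr_{\overline{\mu}}\otimes\id)\bigl((1\otimes W)P^{\overline{\mu}\nu'}_{\cC_1}\bigr)=(\Tr_{\overline{\mu}}\otimes\id)\bigl(P^{\overline{\mu}\nu\gamma}_{\cC_1}(1\otimes W)\bigr)=f(\nu)\,W \; ,
\end{equation*}
so $f(\nu')=f(\nu)$ for each irreducible summand $\nu'\prec\nu\gamma$ \emph{individually}. This is exactly the pointwise information that your eigenvector identity retains only in averaged form, and it proves claim (P) in two lines, with no Perron--Frobenius input and no distinction between finite and infinite orbits.
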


\begin{proof}
For arbitrary $\alpha,\beta\in\Irr(\mathcal{C})$, we have that
\begin{align*}
(\Tr_{\overline{\alpha}}\otimes\id)(P^{\overline{\alpha}\beta}_{\mathcal{C}_1})=d(\beta)^{-1}\Tr_{\overline{\alpha}\beta}(P^{\overline{\alpha}\beta}_{\mathcal{C}_1}) \, 1 =\frac{d([\overline{\alpha}\beta]_{\mathcal{C}_1})}{d(\beta)} \, 1 \; ,
\end{align*}
by irreducibility of $\beta$.
Now suppose that $\alpha,\beta$ satisfy the conditions of the lemma.
Choose $\gamma\in\Irr(\mathcal{C}_1)$ such that $\beta\prec\alpha\gamma$.
For any isometry $W:\beta\to\alpha\gamma$, we compute
\begin{align*}
W(\Tr_{\overline{\alpha}}\otimes\id)(P^{\overline{\alpha}\beta}_{\mathcal{C}_1})
&=(\Tr_{\overline{\alpha}}\otimes\id)((1 \otimes W)P^{\overline{\alpha}\beta}_{\mathcal{C}_1})\\
&=(\Tr_{\overline{\alpha}}\otimes\id)(P^{\overline{\alpha}\alpha\gamma}_{\mathcal{C}_1}(1 \otimes W))\\
&=((\Tr_{\overline{\alpha}}\otimes\id)(P^{\overline{\alpha}\alpha}_{\mathcal{C}_1})\otimes 1)W\\
&=\frac{d([\overline{\alpha}\alpha]_{\mathcal{C}_1})}{d(\alpha)}W \; ,
\end{align*}
where we used that $P^{\overline{\alpha}\alpha\gamma}_{\mathcal{C}_1}=P^{\overline{\alpha}\alpha}_{\mathcal{C}_1}\otimes 1$, as is easy to see by splitting $\overline{\alpha}\alpha$ into irreducible components.
Multiplying by $W^*$ on the left, we find that
\[
(\Tr_{\overline{\alpha}}\otimes\id)(P^{\overline{\alpha}\beta}_{\mathcal{C}_1})=\frac{d([\overline{\alpha}\alpha]_{\mathcal{C}_1})}{d(\alpha)} 1  \;  .
\]
We already proved that the left-hand side equals $\frac{d([\overline{\alpha}\beta]_{\mathcal{C}_1})}{d(\beta)} \, 1$. So, the first equality in \eqref{eqn:index-term-orbit-invariant} follows. The second one is proven analogously.
\end{proof}

\begin{proposition}\label{prop.index-transitive}
Let $\cC$ be a rigid $C^*$-tensor category with full $C^*$-tensor subcategories $\cC_2 \subset \cC_1 \subset \cC$. Then,
$$[\cC : \cC_2] = [\cC : \cC_1] \, [\cC_1 : \cC_2] \; .$$
In particular, if $\cC$ is a rigid $C^*$-tensor category with finitely many irreducible objects and if $\cC_1 \subset \cC$ is a full $C^*$-tensor subcategory, then $[\cC : \cC_1] = d(\cC)/d(\cC_1)$, where $d(\cC)$ and $d(\cC_1)$ denote the global index of $\cC$ and $\cC_1$.
\end{proposition}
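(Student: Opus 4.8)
The whole proposition reduces to a single statement about one orbit. Since $\cC_2 \subset \cC_1$, every $\cC_2$-orbit is contained in a unique $\cC_1$-orbit, so the $\cC_2$-orbits refine the $\cC_1$-orbits. Grouping the defining sum for $[\cC:\cC_2]$ accordingly, it suffices to prove, for each $\cC_1$-orbit $O$ with representative $\alpha$, the per-orbit identity
\[
\sum_{\cC_2\text{-orbits } O' \subseteq O} \frac{d(\beta_{O'})^2}{d([\overline{\beta_{O'}}\beta_{O'}]_{\cC_2})} = \frac{d(\alpha)^2}{d([\overline{\alpha}\alpha]_{\cC_1})}\,[\cC_1:\cC_2],
\]
where $\beta_{O'}$ is a representative of $O'$. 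Summing this over all $\cC_1$-orbits $O$ and recognizing $\sum_O d(\alpha)^2/d([\overline{\alpha}\alpha]_{\cC_1}) = [\cC:\cC_1]$ gives the formula. This first step is merely a regrouping of a sum of nonnegative terms, so it remains valid even if some of the indices are infinite; all the content sits in the per-orbit identity.

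The engine is a pair of Frobenius-reciprocity computations, valid for any full subcategory $\cD$ and any $\cD$-orbit $O$ with representative $\alpha$. First, every irreducible component of $\alpha\gamma$ lies in $O$ and $d$ is multiplicative, so $\sum_{\beta} \mult(\beta,\alpha\gamma)\,d(\beta) = d(\alpha)d(\gamma)$ for $\gamma\in\Irr(\cD)$. Second, Frobenius reciprocity gives $\mult(\beta,\alpha\gamma)=\mult(\gamma,\overline{\alpha}\beta)$, whence $\sum_{\gamma\in\Irr(\cD)}\mult(\beta,\alpha\gamma)\,d(\gamma) = d([\overline{\alpha}\beta]_{\cD})$, which by Lemma \ref{lem:index-well-defined} equals $\tfrac{d(\beta)}{d(\alpha)}\,d([\overline{\alpha}\alpha]_{\cD})$ for $\beta\in O$. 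Evaluating the double sum $\sum_{\beta\in O}\sum_{\gamma\in\Irr(\cD)}\mult(\beta,\alpha\gamma)\,d(\beta)\,d(\gamma)$ in the two orders (Tonelli, since all terms are nonnegative) then produces the key identity
\[
\sum_{\beta\in O} d(\beta)^2 = \frac{d(\alpha)^2}{d([\overline{\alpha}\alpha]_{\cD})}\,d(\cD), \qquad d(\cD)=\sum_{\gamma\in\Irr(\cD)}d(\gamma)^2 .
\]

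I would then apply this key identity three times: to a $\cC_2$-orbit $P$ inside $\cC_1$, to a $\cC_2$-orbit $O'\subseteq O$, and to the $\cC_1$-orbit $O$ itself. When $d(\cC_2)<\infty$, summing the second over the $\cC_2$-orbits of $O$ and comparing with the third (using $[\cC_1:\cC_2]=d(\cC_1)/d(\cC_2)$, which is the first identity summed over $P$) yields the per-orbit identity at once; in particular this already proves the proposition, and the corollary, whenever $\cC$ has finitely many irreducibles. The main obstacle is exactly the general case: $\cC$, $\cC_1$, $\cC_2$ may have infinitely many irreducibles and $[\cC_1:\cC_2]$ can be finite while $d(\cC_2)=\infty$ (for instance an index-two inclusion), so a single $\cC_2$-orbit is infinite, its dimension-weighted sum diverges, and the global bookkeeping collapses. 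To get around this I would run the count \emph{relative to} $\alpha$: the partial sums $\sum_{\beta\in O'}\mult(\beta,\alpha\gamma)\,d(\beta)$ (for a fixed representative $\gamma$ of $P$) and $\sum_{\gamma\in P}\mult(\beta,\alpha\gamma)\,d(\gamma)$ (for a fixed representative $\beta$ of $O'$) are both finite, with clean marginals $d(\alpha)d(\gamma)$ and $\tfrac{d(\beta)}{d(\alpha)}\,d([\overline{\alpha}\alpha]_{\cC_1})$ coming from the two identities above, and I would try to reconcile these two one-sidedly clean tables into the per-orbit identity. The delicate point is controlling how the components of $\alpha\gamma$ distribute among the distinct $\cC_2$-orbits as $\gamma$ ranges over $P$; I expect this matching to be the real hurdle. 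Alternatively, once Proposition \ref{prop:tube-algebra-projective-markov} is available it realizes each $[\cC:\cC_i]$ as a genuine Jones index, and transitivity follows from multiplicativity of the Jones index for the resulting tower of von Neumann completions, bypassing the combinatorial difficulty.

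Finally, the last assertion is the special case $\cC_2=\{\eps\}$: there the orbits are singletons and $[\overline{\alpha}\alpha]_{\{\eps\}}=\eps$, so $[\cC:\{\eps\}]=\sum_{\alpha\in\Irr(\cC)}d(\alpha)^2=d(\cC)$ and likewise $[\cC_1:\{\eps\}]=d(\cC_1)$; transitivity then reads $d(\cC)=[\cC:\cC_1]\,d(\cC_1)$, i.e.\ $[\cC:\cC_1]=d(\cC)/d(\cC_1)$ when $\cC$ has finitely many irreducibles.
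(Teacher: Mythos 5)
Your double-counting identity is correct and worth keeping: for any full subcategory $\mathcal{D}\subset\cC$ and any $\mathcal{D}$-orbit $O$ with representative $\alpha$, evaluating $\sum_{\beta\in O}\sum_{\gamma\in\Irr(\mathcal{D})}\mult(\beta,\alpha\gamma)\,d(\beta)\,d(\gamma)$ in both orders (Frobenius reciprocity plus Lemma \ref{lem:index-well-defined}) does give
\[
\sum_{\beta\in O} d(\beta)^2 \;=\; \frac{d(\alpha)^2}{d([\overline{\alpha}\alpha]_{\mathcal{D}})}\, d(\mathcal{D})
\qquad\text{as an identity in } [0,\infty]\; ,
\]
and from it you correctly deduce the per-orbit identity, hence the proposition, whenever $d(\cC_2)<\infty$. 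This already gives a genuinely elementary proof of the final assertion (finitely many irreducibles) that avoids all of the paper's operator-algebraic machinery. But for the proposition as stated this is a real gap, not a technicality: in the motivating situations (for instance $\cC_1=\CatKer(\Xi)$ of index $|\Lambda|$ inside a category with infinitely many irreducibles, and $\cC_2\subset\cC_1$ of finite index) one has $d(\cC_1)=d(\cC_2)=\infty$, and your key identity degenerates to $\infty=\infty$, from which nothing follows. You acknowledge this and sketch a "count relative to $\alpha$" strategy, but you do not carry it out and you yourself flag the matching of components of $\alpha\gamma$ across $\cC_2$-orbits as the unresolved hurdle. So the main case of the proposition is not proven in your argument.

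Your fallback -- invoke Proposition \ref{prop:tube-algebra-projective-markov} and "multiplicativity of the Jones index" -- is indeed the direction the paper takes, but as stated it does not close the gap. The paper's proof needs, besides the two Markov inclusions $\cAtil_1''\subset\cA''$ and $\cAtil_2''\subset\cA''$ furnished by Proposition \ref{prop:tube-algebra-projective-markov}, the \emph{intermediate} inclusion $\cAtil_2''\subset\cAtil_1''$ to be $[\cC_1:\cC_2]^{-1}$-Markov; the paper is explicit that this does not formally follow from Proposition \ref{prop:tube-algebra-projective-markov} and must be re-derived by running the Pimsner--Popa basis argument of Lemma \ref{lem:psv-lemma39-generalisation} with representatives of the $\cC_2$-orbits in $\Irr(\cC_1)$. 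Moreover, for these non-unital, non-trace-preserving inclusions of semifinite von Neumann algebras there is no off-the-shelf "multiplicativity of the Jones index" to cite: what actually finishes the proof is the dimension-scaling formula of Proposition \ref{prop.dim-scaling-Markov-inclusion}, applied to the single module $p_\eps L^2(\cA)$, whose $\cAtil_2''$-dimension is computed once directly as $[\cC:\cC_2]$ and once through $\cAtil_1''$ as $[\cC_1:\cC_2]\,[\cC:\cC_1]$. (There is also a preliminary reduction, which you skip, showing that finiteness of $[\cC:\cC_2]$ is equivalent to finiteness of both $[\cC:\cC_1]$ and $[\cC_1:\cC_2]$, so that one may assume all three indices finite.) Without the intermediate Markov property and the scaling formula, the tower argument you gesture at is not yet a proof.
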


Since a short proof for Proposition \ref{prop.index-transitive} can be given using the language of Markov inclusions, we postpone the proof until the end of Section \ref{sec.scaling}.

In the concrete computations of $L^2$-Betti numbers in this paper, we only need the particularly easy tensor subcategories $\mathcal{C}_1 \subset \mathcal{C}$ that arise from a homomorphism to a finite group. More precisely, assume that we are given a group $\Lambda$ and a map $\Xi:\Irr(\mathcal{C})\to\Lambda$ satisfying the following two properties.
\begin{enumlist}
\item For all $\alpha,\beta,\gamma\in\Irr(\mathcal{C})$ with $\gamma \prec \alpha\beta$, we have $\Xi(\gamma)=\Xi(\alpha)\Xi(\beta)$.
\item For all $\alpha\in\Irr(\mathcal{C})$, we have $\Xi(\overline{\alpha})=\Xi(\alpha)^{-1}$.
\end{enumlist}
Defining $\CatKer(\Xi) \subset \cC$ as those objects in $\cC$ that can be written as a direct sum of irreducible objects $\gamma \in \Irr(\cC)$ with $\Xi(\gamma) = e$, we obtain a full $C^*$-tensor subcategory $\CatKer(\Xi) \subset \cC$ of index $|\Lambda|$.

Note that $N(\mathcal{C})$, as defined in \eqref{eqn:unit-radical-definition}, always is a subset of $\CatKer(\Xi)$. Actually, denoting by $\Gamma$ the set of orbits for the left (or right) action of $N(\cC)$ on $\Irr(\cC)$, we get that $\Gamma$ has a natural group structure and we can view $\Gamma$ as the largest group quotient of $\cC$.

\subsection{Markov inclusions of tracial von Neumann algebras}\label{sec.Markov}

In \cite[Section 1.1.4]{popa-classification}, the concept of a $\lambda$-Markov inclusion $N \subset (M,\tau)$ of tracial von Neumann algebras was introduced. More generally, Popa defined in \cite[Section 1.2]{popa-cbms} the $\lambda$-Markov property for arbitrary inclusions of von Neumann algebras $N \subset M$ together with a faithful normal conditional expectation $E : M \recht N$. Taking in the tracial setting the unique trace-preserving conditional expectation, both notions coincide.

In this paper, we need a slight variant of this concept for inclusions $N \subset M$ where both $N$ and $M$ are equipped with fixed faithful normal semifinite traces, denoted $\Tr_N$ and $\Tr_M$, but the inclusion need not be trace-preserving. In particular, there is no canonical conditional expectation of $M$ onto $N$.

Recall that an element $v \in M$ is called right $N$-bounded if there exists a $\kappa > 0$ such that $\Tr_M(a^* v^* v a) \leq \kappa \, \Tr_N(a^* a)$ for all $a \in N$. We denote by $L_v : L^2(N,\Tr_N) \recht L^2(M,\Tr_M)$ the associated bounded operator, which is right $N$-linear and given by $L_v(a) = va$ for all $a \in N \cap L^2(N,\Tr_N)$. A family $(v_i)_{i \in I}$ of right $N$-bounded vectors in $M$ is called a Pimsner-Popa basis for $N \subset M$ if
$$\sum_{i \in I} L_{v_i} L_{v_i}^* = 1 \; .$$

\begin{definition}\label{def.Markov-inclusion}
Let $(N,\Tr_N)$ and $(M,\Tr_M)$ be von Neumann algebras equipped with faithful normal semifinite traces. Assume that $N \subset M$, but without assuming that this inclusion is trace-preserving. We say that the inclusion is $\lambda$-Markov for a given number $\lambda > 0$ if a Pimsner-Popa basis $(v_i)_{i \in I}$ satisfies
$$\sum_{i \in I} v_i v_i^* = \lambda^{-1} \, 1 \; .$$
\end{definition}

One checks that this definition does not depend on the choice of the Pimsner-Popa basis.

\begin{definition}\label{def.locally-finite}
Given a von Neumann algebra $M$ equipped with a faithful normal semifinite trace $\Tr$, we call an (algebraic) right $M$-module $\cE$ \emph{locally finite} if for every $\xi \in \cE$, there exists a projection $p \in M$ with $\Tr(p) < \infty$ and $\xi = \xi p$.
\end{definition}

Note that for every projection $p \in M$ with $\Tr(p) < \infty$, the right $M$-module $p L^2(M)$ is locally finite, because for every $\xi \in L^2(M)$, the right support projection of $p \xi$ has finite trace.

For our computations, the following scaling formula is essential.

\begin{proposition}\label{prop.dim-scaling-Markov-inclusion}
Let $(N,\Tr_N)$ and $(M,\Tr_M)$ be von Neumann algebras equipped with faithful normal semifinite traces. Assume that $N \subset M$ and that $\lambda > 0$. The inclusion is $\lambda$-Markov if and only if $\dim_M(\cE) = \lambda \, \dim_N(\cE)$ for every locally finite $M$-module $\cE$.

We have $\dim_M(\cE) = \lambda \, \dim_N(\cE)$ for arbitrary $M$-modules $\cE$ if and only if the inclusion is $\lambda$-Markov and the restriction of $\Tr_M$ to $N$ is semifinite.
\end{proposition}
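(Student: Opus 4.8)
The plan is to reduce everything to a single computation: the value of $\dim_N$ on the Hilbert $N$-modules $pL^2(M)$, for a projection $p\in M$ of finite $\Tr_M$-trace, expressed through a Pimsner–Popa basis. Fix such a basis $(v_i)_{i\in I}$, so that the maps $L_{v_i}:L^2(N,\Tr_N)\to L^2(M,\Tr_M)$ are right $N$-linear and $\sum_i L_{v_i}L_{v_i}^*=1$. Assembling them into $W:L^2(M)\to L^2(N)\ovt\ell^2(I)$, $W\xi=(L_{v_i}^*\xi)_i$, the basis relation reads $W^*W=1$, so $W$ is a right $N$-linear isometry. Since $p$ (acting by left multiplication) commutes with the right $N$-action, $WpW^*$ is a projection in $N\ovt B(\ell^2(I))$ (with $N$ acting by left multiplication on $L^2(N)$) whose range is $W(pL^2(M))$, and hence
\[
\dim_N(pL^2(M)) = (\Tr_N\otimes\Tr)(WpW^*)=\sum_{i\in I}\Tr_N(b_i^{(p)}),
\]
where $b_i^{(p)}\in N^+$ is the element with $L_{v_i}^*\,p\,L_{v_i}$ equal to left multiplication by $b_i^{(p)}$.

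The crux is the identity $\Tr_N(b_i^{(p)})=\Tr_M(p\,v_iv_i^*)$, which must reconcile two traces across an inclusion that is \emph{not} trace-preserving. From the definition of $b_i^{(p)}$ one gets $\Tr_N(b_i^{(p)}\,y)=\Tr_M(v_i^*\,p\,v_i\,y)$ for all $y$ in the trace ideal $\mathfrak m_{\Tr_N}$. I would then specialize to $y=f_k$ for an increasing net of finite-$\Tr_N$-trace projections $f_k\in N$ with $f_k\uparrow 1$ (available because $\Tr_N$ is semifinite) and pass to the limit using cyclicity and normality: $\Tr_N(f_k\,b_i^{(p)})\uparrow\Tr_N(b_i^{(p)})$ on one side, while $\Tr_M(v_i^*\,p\,v_i\,f_k)=\Tr_M(p\,v_i f_k v_i^*)\uparrow\Tr_M(p\,v_iv_i^*)$ on the other. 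Summing over $i$ and invoking normality of $\Tr_M$ yields, for every finite-trace $p\in M$, the master formula
\[
\dim_N(pL^2(M))=\sum_{i\in I}\Tr_M(p\,v_iv_i^*)=\Tr_M\Bigl(p\,\textstyle\sum_i v_iv_i^*\Bigr).
\]
This single identity does all the work and makes the independence of the chosen basis transparent.

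Part 1 now follows in both directions. If the inclusion is $\lambda$-Markov, then $\sum_i v_iv_i^*=\lambda^{-1}1$ and the formula gives $\dim_N(pL^2(M))=\lambda^{-1}\Tr_M(p)=\lambda^{-1}\dim_M(pL^2(M))$ for all finite-trace $p$. Using the standard compatibility of the L\"uck dimension with $L^2$-completion for finitely generated projective modules (so $\dim_N(pM)=\dim_N(pL^2(M))$) and additivity, the scaling $\dim_M=\lambda\,\dim_N$ holds on all modules $pM$ and hence on finitely generated projectives. Every locally finite module $\cE$ is a directed union of finitely generated submodules $\mathcal F$, each satisfying $\mathcal F=\mathcal F p$ for a finite-trace $p$ (take $p$ the join of finitely many finite-trace supports) and therefore admitting a surjection $(pM)^n\to\mathcal F$; since restriction of modules is exact, $\dim_N$ is additive and cofinal as a function of $M$-modules, and the scaling propagates from the $pM$ to all locally finite $\cE$. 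Conversely, if scaling holds on every locally finite module, applying it to $pL^2(M)$ and comparing with the master formula gives $\Tr_M(p\sum_i v_iv_i^*)=\lambda^{-1}\Tr_M(p)$ for all finite-trace $p$; letting $p\uparrow 1$ and using faithfulness and semifiniteness of $\Tr_M$ forces $\sum_i v_iv_i^*=\lambda^{-1}1$, i.e.\ the $\lambda$-Markov property.

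For Part 2 the issue is to push the scaling to modules that are not locally finite, the prototype being $M$ itself when $\Tr_M(1)=\infty$, and this is exactly where semifiniteness of $\Tr_M|_N$ becomes indispensable. I would choose projections $f_k\in N$ with $\Tr_M(f_k)<\infty$ and $f_k\uparrow1$; each right module $f_kM$ is locally finite (the right support of $f_ka$ has trace at most $\Tr_M(f_k)$), so Part 1 gives $\dim_N(f_kM)=\lambda^{-1}\Tr_M(f_k)$, and passing to the supremum over the increasing family $f_kM$ recovers $\dim_M(M)=\lambda\,\dim_N(M)$; the general case then follows by reducing to finitely generated projectives and exhausting them with the diagonal projections $f_k^{\oplus n}\in M_n(N)$. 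For the converse, scaling on all modules yields the $\lambda$-Markov property via Part 1, while the failure of semifiniteness of $\Tr_M|_N$ is detected on a module supported on the non-semifinite part of $N$, where $\dim_M$ is infinite but $\dim_N$ stays finite. The two points I expect to be genuinely delicate are the trace-matching limit of the second paragraph, and this last exhaustion of non-locally-finite modules by finite-trace projections drawn \emph{from $N$}, which is precisely what the semifiniteness hypothesis on $\Tr_M|_N$ guarantees.
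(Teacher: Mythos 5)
Your overall architecture --- the Pimsner--Popa trace formula for $\dim_N(pL^2(M))$, bootstrapping from finite-trace projections through finitely generated projectives and quotients to locally finite modules, and exhaustion by projections from $N$ in Part~2 --- is the same as the paper's. But there are two genuine gaps at exactly the points where the real work lies. The first is the step you wave through as ``standard compatibility of the L\"uck dimension with $L^2$-completion for finitely generated projective modules,'' i.e.\ $\dim_N(pM)=\dim_N(pL^2(M))$. No standard theorem covers this: $pM$ is finitely generated projective over $M$, \emph{not} over $N$, the dimension you need is the one over $N$, the trace $\Tr_N$ is semifinite rather than finite, and the inclusion $N\subset M$ is not trace-preserving. (That such compatibility statements are fragile in this setting is precisely the content of the second half of the proposition.) The inequality $\dim_N(pM)\leq\dim_N(pL^2(M))$ is monotonicity, but the reverse inequality must be proved: the paper does it by embedding, for every \emph{finite} subset $I_0\subset I$, the finitely generated projective $N$-module $(\id\ot\al)(p)\bigl(\C^n\ot\ell^2(I_0)\ot N\bigr)$ into $p(\C^n\ot M)$ via the basis vectors ($x\mapsto p(e_i\ot v_j a)$), and letting $I_0$ increase. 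Your isometry $W$ contains the ingredients for this, but the argument is not made, and it is the crux of the direction ``Markov $\Rightarrow$ scaling.'' (You also need, for the quotient step, that finitely generated $M$-submodules of $p(\C^n\ot M)$ are again of this form with finite-trace projection --- semiheredity of $M$ --- since additivity requires the scaling on the kernel of the surjection $(pM)^n\recht\mathcal{F}$.)

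The second gap is in Part~2. The phrase ``the general case then follows by reducing to finitely generated projectives'' is not a valid reduction: an arbitrary $M$-module is a quotient of a free module, but when $\Tr_M(1)=\infty$ free modules have infinite dimension, so additivity degenerates to $\infty-\infty$. The correct tool is the property $\dim_M(\cE)=\lim_n\dim_M(\cE\,p_nM)$ of the semifinite dimension function (\cite[Lemmas A.15 and A.16]{kpv13}), applied to an increasing net $p_n\in N$ with \emph{both} $\Tr_N(p_n)<\infty$ and $\Tr_M(p_n)<\infty$ --- the existence of such a net is exactly where semifiniteness of $\Tr_M|_N$ enters --- combined with local finiteness of $\cE\,p_nM$ and the sandwich $\dim_N(\cE\,p_nN)\leq\dim_N(\cE\,p_nM)\leq\dim_N(\cE)$. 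Your exhaustion of $\cE=M$ by $f_kM$ silently uses the same nontrivial lemma (note $\bigcup_k f_kM\neq M$, so continuity alone does not suffice). Finally, the converse of Part~2 needs an actual counterexample, and the mechanism you describe (``$\dim_M$ infinite but $\dim_N$ stays finite'') is backwards: the paper takes $\cE=M/M_0$, where $M_0$ is the ideal of elements whose support has finite $\Tr_M$; then $\cE\,q=0$ for every finite-trace projection $q\in M$, so $\dim_M(\cE)=0$, while $pN\hookrightarrow\cE$ is injective for the projection $p\in N$ witnessing non-semifiniteness, so $\dim_N(\cE)\geq\Tr_N(p)>0$. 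So the failure one can actually produce is ``$0$ versus positive,'' not ``$\infty$ versus finite,'' and as written you give no construction at all.
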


\begin{proof}
Fix a Pimsner-Popa basis $(v_i)_{i \in I}$ for $N \subset M$, w.r.t.\ the traces $\Tr_N$, $\Tr_M$. Define the projection $q \in B(\ell^2(I)) \ovt N$ given by $q_{ij} = L_{v_i}^* L_{v_j}$. Then,
$$U : L^2(M,\Tr_M) \recht q(\ell^2(I) \ot L^2(N,\Tr_N)) : U(x) = \sum_{i \in I} e_i \ot L_{v_i}^*(x)$$
is a well-defined right $N$-linear unitary operator. Whenever $a \in M$, the operator $U a U^*$ commutes with the right $N$-action and so, we get a well-defined unital $*$-homomorphism
$$\al : M \recht q (B(\ell^2(I)) \ovt N) q : \al(a) = U a U^* \; .$$
A direct computation gives that
$$(\Tr \ot \Tr_N)(\al(a)) = \sum_{i \in I} \Tr_M(v_i^* a v_i)$$
for all $a \in M^+$. So the inclusion $N \subset M$ is $\lambda$-Markov if and only if $\Tr_M(p) = \lambda \, (\Tr \ot \Tr_N)(\al(p))$ for every projection $p \in M$. Note that the left-hand side equals $\dim_{-M}(p L^2(M))$, while the right-hand side equals $\lambda \dim_{-N}(p L^2(M))$. So if the formula $\dim_M(\cE) = \lambda \, \dim_N(\cE)$ holds for all locally finite $M$-modules, it holds in particular for $\cE = p L^2(M)$ for every projection $p \in M$ with $\Tr_M(p) < \infty$ and we conclude that $\Tr_M(p) = \lambda \, (\Tr \ot \Tr_N)(\al(p))$ for every projection $p \in M$ with $\Tr_M(p) < \infty$. An arbitrary projection $p \in M$ can be written as the limit of an increasing net of finite trace projections, so that the same formula holds for all projections $p \in M$ and thus, $N \subset M$ is $\lambda$-Markov.

Conversely, assume that $N \subset M$ is $\lambda$-Markov. We prove that $\dim_M(\cE) = \lambda \, \dim_N(\cE)$ for every locally finite $M$-module $\cE$. Denote by $\cL_1$ the class of $M$-modules that are isomorphic with $p (\C^n \ot M)$ for some $n \in \N$ and some projection $p \in M_n(\C) \ot M$ having finite trace. We start by proving that $\dim_N(\cE) = \lambda^{-1} \dim_M(\cE)$ for all $\cE \in \cL_1$.

Take a finite trace projection $p \in M_n(\C) \ot M$ such that $\cE \cong p(\C^n \ot M)$. We have
$$p(\C^n \ot M) \subset p(\C^n \ot L^2(M,\Tr_M)) \cong (\id \ot \al)(p) (\C^n \ot \ell^2(I) \ot L^2(N,\Tr_N)) \; .$$
Therefore,
\begin{align*}
\dim_{-N}(p (\C^n \ot M)) & \leq \dim_{-N}((\id \ot \al)(p) (\C^n \ot \ell^2(I) \ot L^2(N,\Tr_N))) \\ &= (\Tr \ot \Tr \ot \Tr_N)(\id \ot \al)(p) \\
& = \lambda^{-1} \, (\Tr \ot \Tr_M)(p) = \lambda^{-1} \, \dim_{-M}(p (\C^n \ot M)) \; .
\end{align*}
Conversely, since
$$(1 \ot U^*)\bigl( (\id \ot \al)(p) (e_i \ot e_j \ot a) \bigr) = p(e_i \ot v_j a)$$
for all $i \in \{1,\ldots,n\}$, $j \in I$ and $a \in N$, we get for every finite subset $I_0 \subset I$ the injective $N$-module map
$$(\id \ot \al)(p) (\C^n \ot \ell^2(I_0) \ot N) \hookrightarrow p(\C^n \ot M) \; .$$
Letting $I_0$ increase and taking $\dim_{-N}$, it follows that
$$(\Tr \ot \Tr \ot \Tr_N)(\id \ot \al)(p) \leq \dim_{-N}(p(\C^n \ot M)) \; .$$
The left-hand side equals $\lambda^{-1} \, (\Tr \ot \Tr_M)(p) = \lambda^{-1} \, \dim_{-M}(p(\C^n \ot M))$. In combination with the converse inequality above, we have proved that $\dim_M(\cE) = \lambda \, \dim_N(\cE)$ for every $\cE \in \cL_1$.

Next denote by $\cL_2$ the class of all $M$-modules that arise as the quotient of an $M$-module in $\cL_1$. Let $\cE \in \cL_2$ and let $0 \recht \cE_0 \recht \cE_1 \recht \cE \recht 0$ be an exact sequence of $M$-modules, with $\cE_1 \in \cL_1$. Since every finitely generated $M$-submodule of an $M$-module in $\cL_1$ again belongs to $\cL_1$, we can write $\cE_0$ as the union of an increasing family of $M$-submodules $\cE_j \subset \cE_0$ with $\cE_j \in \cL_1$ for all $j$. Since both $\dim_M$ and $\dim_N$ are continuous when taking increasing unions (see Remark \ref{rem.dim-function-continuity-additivity}), we get that $\dim_M(\cE_0) = \lambda \, \dim_N(\cE_0)$. Since both $\dim_M$ and $\dim_N$ are additive with respect to short exact sequences (see Remark \ref{rem.dim-function-continuity-additivity} as well), we conclude that
$$\dim_M(\cE) = \dim_M(\cE_1) - \dim_M(\cE_0) = \lambda \, \dim_N(\cE_1) - \lambda \, \dim_N(\cE_0) = \lambda \, \dim_N(\cE) \; .$$
Finally, every locally finite $M$-module can be written as the union of an increasing family of $M$-submodules in $\cL_2$. So again using the continuity of the dimension function, we find that $\dim_M(\cE) = \lambda \, \dim_N(\cE)$ for all locally finite $M$-modules $\cE$.

Next assume that $N \subset M$ is $\lambda$-Markov and that the restriction of $\Tr_M$ to $N$ is semifinite. We can then choose an increasing net of projections $p_n \in N$, converging to $1$ strongly, with $\Tr_N(p_n) < \infty$ and $\Tr_M(p_n) < \infty$ for all $n$. Let $\cE$ be an arbitrary $M$-module. By \cite[Lemmas A.15 and A.16]{kpv13}, we have $\dim_M(\cE) = \lim_n \dim_M(\cE \, p_n M)$. For each $n$, the $M$-module $\cE \, p_n M$ is locally finite. Therefore, $\dim_M(\cE \, p_n M) = \lambda \, \dim_N(\cE \, p_n M)$. Since $\cE \, p_n M \subset \cE$, it follows that $\dim_M(\cE) \leq \lambda \, \dim_N(\cE)$. Conversely, $\cE \, p_n N \subset \cE \, p_n M$, so that
$$\dim_N(\cE \, p_n M) \geq \dim_N(\cE \, p_n N) \; .$$
Again using \cite[Lemmas A.15 and A.16]{kpv13}, we have $\lim_n \dim_N(\cE \, p_n N) = \dim_N(\cE)$, so that the inequality $\dim_M(\cE) \geq \lambda \, \dim_N(\cE)$ follows.

Finally, assume that the restriction of $\Tr_M$ to $N$ is not semifinite. We then find a nonzero projection $p \in N$ such that $\Tr_M(x) = +\infty$ for every nonzero element $x \in p N^+ p$. Define the two-sided ideal $M_0 \subset M$ consisting of all elements $x \in M$ whose left (equivalently right) support projection has finite $\Tr_M$. Define $\cE = M/M_0$ and view $\cE$ as a right $M$-module. Whenever $p \in M$ is a projection with $\Tr_M(p) < \infty$, we have that $\cE \, p = \{0\}$. By \cite[Definition A.14]{kpv13}, we have that $\dim_{-M}(\cE) = 0$. On the other hand, the map $p N \recht \cE : x \mapsto x + M_0$ is $N$-linear and injective because $p N \cap M_0 = \{0\}$. Therefore, $\dim_N(\cE) \geq \Tr_N(p) > 0$. So, the dimension scaling formula fails in general when the restriction of $\Tr_M$ to $N$ is no longer semifinite.
\end{proof}

\begin{remark}\label{rem.dim-function-continuity-additivity}
In the proof of Proposition \ref{prop.dim-scaling-Markov-inclusion}, we made use of the following continuity and additivity property of the dimension function $\dim_M$ associated with a von Neumann algebra $M$ equipped with a faithful normal semifinite trace $\Tr$.
\begin{enumlist}
\item\label{dim.one} Assume that $\cE$ is an $M$-module and $\cE_j \subset \cE$ is an increasing net of $M$-submodules with $\bigcup_j \cE_j = \cE$. Then, $\dim_M(\cE) = \lim_j \dim_M(\cE_j)$.
\item\label{dim.two} Assume that $0 \recht \cE_1 \recht \cE \recht \cE_2 \recht 0$ is an exact sequence of $M$-modules. Then, $\dim_M(\cE) = \dim_M(\cE_1) + \dim_M(\cE_2)$.
\end{enumlist}
When $\Tr$ is a tracial state, meaning that $\Tr(1) = 1$, these properties are proved in \cite[Theorem 6.7(4)]{lueck-l2-invariants}. When $\Tr$ is semifinite, we can take an increasing net of projections $p_i \in M$ with $\Tr(p_i) < \infty$ for all $i$ and $p_i \recht 1$ strongly. Define the tracial state $\tau_i$ on $p_i M p_i$ given by $\tau_i(x) = \Tr(p_i)^{-1} \Tr(x)$. Then \cite[Lemma A.16]{kpv13} says that for every $M$-module $\cE$, the net $\Tr(p_i) \dim_{(p_i M p_i,\tau_i)}(\cE p_i)$ is increasing and converges to $\dim_M(\cE)$. Therefore, the continuity and additivity properties \ref{dim.one} and \ref{dim.two} above are also valid for $\dim_M$.
\end{remark}

\begin{remark}\label{rem.dimension-function}
Let $(M,\Tr)$ be a von Neumann algebra equipped with a faithful normal semifinite trace. Proposition \ref{prop.dim-scaling-Markov-inclusion} shows that the dimension function $\dim_M$ has a subtle behavior. We therefore also want to clarify why \cite[Definition A.14]{kpv13}, given by
\begin{equation}\label{eq.kpv-def}
\dim_M(\cE) = \sup \bigl\{ \Tr(q) \, \dim_{qMq}(\cE \, q) \bigm| q \in M \;\;\text{is a projection with}\;\; \Tr(q) <\infty \bigr\}
\end{equation}
and making use of the dimension function for $(qMq, \Tr(q)^{-1} \Tr(\,\cdot\,))$, coincides with \cite[Definition B.17]{petersen-PhD}, given by
\begin{equation}\label{eq.pet-def}
\begin{split}
\dim_M(\cE) = \sup \bigl\{ (\Tr \ot \Tr)(p) \bigm| \; & p \in M_n(\C) \ot M \;\;\text{is a projection with finite trace and}\\ & p (\C^n \ot M) \hookrightarrow \cE \;\;\text{as $M$-modules} \bigr\}
\end{split}
\end{equation}
Whenever $p(\C^n \ot M) \hookrightarrow \cE$, we have $p(\C^n \ot M q) \hookrightarrow \cE \, q$. Denoting by $z_q \in \cZ(M)$ the central support of $q$, it follows from \cite[Lemma A.15]{kpv13} that
$$\Tr(q) \, \dim_{qMq}(\cE \, q) \geq \Tr(q) \, \dim_{qMq}(p(\C^n \ot Mq)) = (\Tr \ot \Tr)(p(1 \ot z_q)) \; .$$
Taking the supremum over all finite trace projections $q \in M$ and all embeddings $p(\C^n \ot M) \hookrightarrow \cE$, it follows that the dimension in \eqref{eq.pet-def} is bounded above by the dimension in \eqref{eq.kpv-def}.

Conversely, $\Tr(q) \, \dim_{qMq}(\cE \, q)$ can be computed as the supremum of $(\Tr \ot \Tr)(p)$ where $p \in M_n(\C) \ot q M q$ is a projection and $\theta : p (\C^n \ot Mq) \hookrightarrow \cE \, q$. Defining $\xi \in \overline{\C^n} \ot \cE \, q$ given by $\xi = \sum_{i=1}^n e_i^* \ot \theta(p(e_i \ot q))$, it follows that $\xi = \xi p$ and $\theta(x) = \xi x$ for all $x \in p(\C^n \ot Mq)$. Then $\psi : p(\C^n \ot M) \recht \cE : \psi(x) = \xi x$ is $M$-linear. We claim that $\psi$ remains injective. Indeed, if $\psi(x) = 0$, then for all $i \in \{1,\ldots,n\}$, also $\theta(xx^*(e_i \ot 1)) = \psi(x) x^* (e_i \ot 1) = 0$. So, $xx^*(e_i \ot 1) = 0$ for all $i$ and thus, $x = 0$. It follows that the dimension in \eqref{eq.kpv-def} is bounded above by the dimension in \eqref{eq.pet-def}.
\end{remark}

\subsection{The scaling formula}\label{sec.scaling}

The goal of this section is to prove the following scaling formula for $L^2$-Betti numbers under finite-index inclusions.

\begin{theorem}\label{thm:scaling-formula}
Let $\mathcal{C}_1\subset\mathcal{C}$ be a finite-index inclusion of rigid $C^*$-tensor categories.
Then
\[
\beta^{(2)}_n(\mathcal{C}_1)=[\mathcal{C}:\mathcal{C}_1]\; \beta^{(2)}_n(\mathcal{C})
\]
for all $n\geq 0$.
\end{theorem}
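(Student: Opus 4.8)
The plan is to compute both sides from the tube algebras and to let the index enter only through the von Neumann dimension, exactly as in the classical scaling formula $\beta^{(2)}_n(\Lambda) = [\Gamma:\Lambda]\,\beta^{(2)}_n(\Gamma)$ for finite-index inclusions of groups. Write $\cA$ and $\cA_1$ for the tube algebras of $\cC$ and $\cC_1$. Since $\cC_1 \subset \cC$ is \emph{full}, for $i,j,\alpha \in \Irr(\cC_1)$ the morphism space $(i\alpha,\alpha j)$ is the same whether computed in $\cC_1$ or in $\cC$, and the operations $\cdot$ and ${}^\#$ keep their outputs in $\Irr(\cC_1)$ because $\cC_1$ is closed under tensor products and subobjects. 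Hence $\cA_1$ is a $*$-subalgebra of $\cA$, sharing the local units $p_i$, $i \in \Irr(\cC_1)$, and the counit of $\cA$ restricts to that of $\cA_1$, so the trivial $\cA$-module $\C$ restricts to the trivial $\cA_1$-module. Thus $H_n(\cC,L^2(\cA)^0) = \Tor^{\cA}_n(L^2(\cA)^0,\C)$ and $H_n(\cC_1,L^2(\cA_1)^0) = \Tor^{\cA_1}_n(L^2(\cA_1)^0,\C)$ use literally the same module $\C$ on the trivial side.

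Next I would compare the two homologies through a single resolution. Fix a resolution $P_\bullet \to \C$ of the trivial module by projective left $\cA$-modules. Using the finite-index hypothesis I would establish the module-theoretic input needed to restrict it: that $\cA$ is projective (indeed flat) as a left $\cA_1$-module, for which the Pimsner--Popa basis underlying the Markov inclusion of Proposition \ref{prop:tube-algebra-projective-markov} should provide the required $\cA_1$-module splitting. Granting this, $P_\bullet$ restricts to a projective $\cA_1$-resolution of $\C$, and the two homologies are computed by the complexes $L^2(\cA)^0 \otimes_{\cA} P_\bullet$ and $L^2(\cA_1)^0 \otimes_{\cA_1} P_\bullet$. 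The decisive step is to identify these compatibly: matching the coefficient modules along $\cA_1 \subset \cA$, I would produce a natural isomorphism
\[
H_n(\cC_1,L^2(\cA_1)^0) \;\cong\; H_n(\cC,L^2(\cA)^0)
\]
of left $\cA_1''$-modules, where on the right the $\cA''$-action is restricted to $\cA_1''$.

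Finally I would run the dimension count. By Proposition \ref{prop:tube-algebra-projective-markov} the associated inclusion of von Neumann completions is $\lambda$-Markov with $\lambda^{-1} = [\cC:\cC_1]$, so Proposition \ref{prop.dim-scaling-Markov-inclusion} gives $\dim_{\cA_1''}(\cE) = [\cC:\cC_1]\,\dim_{\cA''}(\cE)$ for every module $\cE$ restricted from $\cA''$ to $\cA_1''$. Applying this to $\cE = H_n(\cC,L^2(\cA)^0)$ and invoking the isomorphism above yields
\[
\beta^{(2)}_n(\cC_1) = \dim_{\cA_1''} H_n(\cC_1,L^2(\cA_1)^0) = [\cC:\cC_1]\,\dim_{\cA''} H_n(\cC,L^2(\cA)^0) = [\cC:\cC_1]\,\beta^{(2)}_n(\cC).
\]

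The main obstacle is the middle step, and it is genuinely harder than its group-theoretic model. For $\Lambda \subset \Gamma$ the algebra $\C[\Gamma]$ is \emph{free of finite rank} over $\C[\Lambda]$, so $L^2(\Lambda)\otimes_{\C[\Lambda]}\C[\Gamma]$ equals $L^2(\Gamma)$ on the nose and the identification of homologies is immediate. Here $[\cC:\cC_1]$ is typically non-integer and $\cA$ is only a Markov --- i.e.\ von Neumann --- extension of $\cA_1$, governed by a Pimsner--Popa basis rather than a finite basis; one must therefore control $L^2(\cA_1)^0 \otimes_{\cA_1} \cA$ against $L^2(\cA)^0$ through that basis and check that passing to the $L^2$-completion does not distort the Lück dimension. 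I expect to handle this by cutting down with the local-unit projection $p = \sum_{i\in\Irr(\cC_1)} p_i$ and with finite-trace projections, where Proposition \ref{prop.dim-scaling-Markov-inclusion} applies directly, and then removing the cutoff using the continuity and additivity of $\dim$ recorded in Remark \ref{rem.dim-function-continuity-additivity}.
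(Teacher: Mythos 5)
Your overall architecture --- compute both sides as $\Tor$ over tube algebras, transport one to the other by base change along a projective inclusion, and let the index enter only through the Markov dimension scaling of Proposition \ref{prop.dim-scaling-Markov-inclusion} --- is exactly the paper's. But there is a genuine gap, and it sits at the point you yourself flag as the main obstacle: you run the whole argument with the honest tube algebra $\cA_1$ of $\cC_1$, whereas every tool you invoke is only valid for the \emph{enlarged} algebra $\cAtil_1=\bigoplus_{i,j\in\Irr(\cC)}\bigoplus_{\alpha\in\Irr(\cC_1)}(i\alpha,\alpha j)$ of \eqref{eqn:a1tilde-def}. The inclusion $\cA_1\subset\cA$ is of corner type: writing $p_1=\sum_{i\in\Irr(\cC_1)}p_i$, one has $\cA_1\cdot\cA\subset p_1\cdot\cA\subsetneq\cA$, so $\cA$ is a \emph{degenerate} left $\cA_1$-module; since projective modules over an algebra with local units are direct summands of free modules and hence nondegenerate, $\cA$ is not projective (nor flat in any usable sense) over $\cA_1$, and the restriction/base-change step collapses. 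Proposition \ref{prop:tube-algebra-projective-markov}, which you cite for both the projectivity and the Markov property, is a statement about $\cAtil_1$, not $\cA_1$: its Pimsner--Popa basis consists of elements $W\in(i\al_s,\al_s j)$ with $i,j$ ranging over \emph{all} of $\Irr(\cC)$, so it splits $\cA$ as an $\cAtil_1$-module, not as an $\cA_1$-module; and since $1_{\cA_1''}=p_1\neq 1_{\cA''}$, Definition \ref{def.Markov-inclusion} and Proposition \ref{prop.dim-scaling-Markov-inclusion} do not even apply to $\cA_1''\subset\cA''$.

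Moreover, the scaling you need, $\dim_{\cA_1''}(\cE)=[\cC:\cC_1]\,\dim_{\cA''}(\cE)$ for restrictions of arbitrary $\cA''$-modules, is \emph{false}, not merely unproved. Take $\cC$ to be a finite group $\Gamma$ viewed as a pointed category, $\cC_1=\{\eps\}$, so $[\cC:\cC_1]=|\Gamma|$, and $\cE=L^2(\cA)$. Then $\dim_{\cA''}(\cE)=\tau(1)=|\Gamma|$, while the nondegenerate part of $\cE$ as a module over $\cA_1''=\C p_\eps$ is $p_\eps L^2(\cA)$, which here equals the fusion algebra $\C[\cC]$ and has dimension $|\Gamma|$; hence $\dim_{\cA_1''}(\cE)=|\Gamma|\neq|\Gamma|^2=[\cC:\cC_1]\,\dim_{\cA''}(\cE)$. (Over $\cAtil_1$, which in this example is $c_c(\Gamma)$, one does get $|\Gamma|^2$, as the Markov property predicts.) For the same reason, the natural map $L^2(\cA_1)^0\ot_{\cA_1}\cA\recht L^2(\cA)^0$ is far from surjective: its image lies in $p_1\cdot L^2(\cA)^0$, and in the example it is just $p_\eps\cdot\cA$. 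Your proposed repair --- cutting by $p_1$ and by finite-trace projections --- does not fix this, because cutting by $p_1$ irretrievably discards the dimension carried by $(1-p_1)\cE$, and the fact that the homology modules happen to be concentrated under $p_1$ is itself something one only sees after passing to $\cAtil_1$. The missing ingredient is precisely the paper's Lemma \ref{lem.extend-to-Atilde-1}: since $p_1$ is central in $M(\cAtil_1)$ and $p_\eps\leq p_1$, the bar complexes of $\cA_1$ and $\cAtil_1$ over the trivial module coincide, whence $\beta_n^{(2)}(\cC_1)=\dim_{\cAtil_1''}\Tor_n^{\cAtil_1}(L^2(\cAtil_1)^0,\C)$. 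Once $\cA_1$ is replaced by $\cAtil_1$ in this way, your outline (projectivity of $\cA$ over $\cAtil_1$, base change, the identification $L^2(\cAtil_1)^0\ot_{\cAtil_1}\cA\cong L^2(\cA)^0$, and the trace-preserving $[\cC:\cC_1]^{-1}$-Markov scaling) goes through and is exactly the paper's proof; note also that the theorem itself survives the counterexample above, which only refutes your intermediate scaling claim.
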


For the rest of this section, fix a rigid $C^*$-tensor category $\cC$ and a full $C^*$-tensor subcategory $\mathcal{C}_1 \subset \mathcal{C}$. The tube algebra $\cA_1$ of $\cC_1$ naturally is a unital $*$-subalgebra of a \emph{corner} of the tube algebra $\cA$ of $\cC$. In dimension computations, this causes a number of issues that can be avoided by considering the $*$-subalgebra $\cAtil_1 \subset \cA$ given by
\begin{equation}
\label{eqn:a1tilde-def}
\widetilde{\mathcal{A}}_1=\bigoplus_{i,j\in\Irr(\mathcal{C})}\bigoplus_{\alpha\in\Irr(\mathcal{C}_1)} (i\alpha,\alpha j).
\end{equation}
We still have a natural trace $\tau$ on $\cAtil_1$ and the inclusion $\cAtil_1 \subset \cA$ is trace-preserving.

As a first lemma, we prove that the homology of $\cC_1$ can be computed as the Hochschild homology of $\cAtil_1$ with the counit augmentation $\counit : \cAtil_1 \recht \C$.

\begin{lemma}\label{lem.extend-to-Atilde-1}
Define the central projection $p_1$ in the multiplier algebra of $\cAtil_1$ given by $p_1 = \sum_{i \in \Irr(\cC_1)} p_i$. Note that $p_1 \cdot \cAtil_1 \cdot p_1 \cong \cA_1$ naturally.

For every nondegenerate right Hilbert $\cAtil_1$-module $\cK$, there are natural isomorphisms
$$H_\bullet(\cC_1,\cK \cdot p_1) \cong \Tor_\bullet^{\cAtil_1}(\cK^0,\C) \quad\text{and}\quad H^\bullet(\cC_1,\cK \cdot p_1) \cong \Ext^\bullet_{\cAtil_1}(\C,\cK^0) \; .$$
We also have that
$$\beta_n^{(2)}(\cC_1) = \dim_{\cAtil_1''} \Tor_n^{\cAtil_1}(L^2(\cAtil_1)^0,\C) \; .$$
\end{lemma}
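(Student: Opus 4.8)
The plan is to exploit that, although $\cAtil_1$ is defined as a subalgebra of $\cA$, it actually splits as a direct sum of algebras in which $\cA_1 \cong p_1 \cdot \cAtil_1 \cdot p_1$ is the summand carrying the trivial representation, so that replacing $\cA_1$ by $\cAtil_1$ changes nothing homologically. The conceptually crucial first step is to verify that $p_1$ is genuinely central, i.e.\ that $(i\alpha,\alpha j) \neq 0$ with $\alpha \in \Irr(\cC_1)$ forces $i \in \Irr(\cC_1) \Leftrightarrow j \in \Irr(\cC_1)$. This is a short Frobenius reciprocity argument: a nonzero space $(i\alpha,\alpha j)$ yields an irreducible $\delta$ with $\delta \prec i\alpha$ and $\delta \prec \alpha j$, and since $\cC_1$ is full (closed under subobjects and tensoring) and $\alpha, \overline{\alpha} \in \cC_1$, the relation $\delta \prec i\alpha$ gives $i \in \Irr(\cC_1) \Leftrightarrow \delta \in \Irr(\cC_1)$, while $\delta \prec \alpha j$ gives $\delta \in \Irr(\cC_1) \Leftrightarrow j \in \Irr(\cC_1)$. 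Hence $p_1 \cdot V = V \cdot p_1$ for all $V$, so $\cAtil_1 = \cA_1 \oplus (1-p_1)\cAtil_1$ is a direct sum of algebras with $\cA_1 = p_1 \cdot \cAtil_1 = p_1 \cdot \cAtil_1 \cdot p_1$.

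Next I record that the trivial module $\C$ is supported on $p_1$ on both sides: since $\counit$ vanishes off $p_\eps \cdot \cAtil_1 \cdot p_\eps$ and $\counit(p_\eps) = \Tr_\eps(1) = 1$ with $\eps \in \Irr(\cC_1)$, one gets $p_1 \cdot \C = \C = \C \cdot p_1$, so $(1-p_1)$ acts as $0$. Consequently every projective resolution $P_\bullet \to \C$ over $\cA_1$ is simultaneously a projective resolution over $\cAtil_1$ (projective $\cA_1$-modules are projective $\cAtil_1$-modules, $\cA_1$ being a direct summand), and for modules supported on $p_1$ one has the functorial identifications $M \ot_{\cAtil_1} N \cong Mp_1 \ot_{\cA_1} N$ and $\Hom_{\cAtil_1}(N',M) \cong \Hom_{\cA_1}(N', Mp_1)$. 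Applying these to $P_\bullet$ and $M = \cK^0$, and using $\cK^0 p_1 = (\cK \cdot p_1)^0$, yields
$$\Tor^{\cAtil_1}_\bullet(\cK^0, \C) \cong \Tor^{\cA_1}_\bullet((\cK \cdot p_1)^0, \C) = H_\bullet(\cC_1, \cK \cdot p_1)$$
together with the analogous $\Ext$ identity, which is the first assertion.

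For the $L^2$-Betti numbers I apply this homology identity to $\cK = L^2(\cAtil_1)$. Centrality of $p_1$ gives $\cAtil_1'' = \cA_1'' \oplus ((1-p_1)\cAtil_1)''$ with $\tau$ splitting as a direct sum, while left and right multiplication by $p_1$ identify $p_1 L^2(\cAtil_1) = L^2(\cA_1)$ isometrically, the trace $\tau$ restricting to the tube-algebra trace of $\cC_1$. Hence $(L^2(\cAtil_1) p_1)^0 = L^2(\cA_1)^0$ and
$$\Tor^{\cAtil_1}_n(L^2(\cAtil_1)^0, \C) \cong H_n(\cC_1, L^2(\cA_1)^0) \; .$$
The left $\cAtil_1''$-action on this module comes from left multiplication on the $L^2$-factor, and since $(1-p_1)$ annihilates $L^2(\cA_1)$ on the left, the module is supported on the central summand $\cA_1'' = p_1 \cAtil_1''$. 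I then invoke that the L\"uck dimension is insensitive to such a central cutdown: for a module $H = p_1 H$ over $\cAtil_1''$, the finite-trace projections and the embeddings $p(\C^n \ot \cAtil_1'') \hookrightarrow H$ appearing in \eqref{eq.kpv-def} all localise to the corner $\cA_1''$, on which $\tau$ restricts to the trace defining $\beta^{(2)}_n(\cC_1)$, so $\dim_{\cAtil_1''}(H) = \dim_{\cA_1''}(H)$. Combining the last displays gives $\dim_{\cAtil_1''}\Tor^{\cAtil_1}_n(L^2(\cAtil_1)^0,\C) = \dim_{\cA_1''} H_n(\cC_1,L^2(\cA_1)^0) = \beta^{(2)}_n(\cC_1)$.

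The main structural point, which makes everything else formal, is the centrality of $p_1$ via the orbit/Frobenius computation; once this is in place the homological identities are the standard behaviour of $\Tor$ and $\Ext$ for a product of algebras. The genuinely fiddly parts are purely technical: the local-unit bookkeeping when moving $p_1$ across the tensor product and the $\Hom$, and the verification that $\tau$ restricts correctly so that the two dimension functions $\dim_{\cAtil_1''}$ and $\dim_{\cA_1''}$ agree on a module supported on the central summand.
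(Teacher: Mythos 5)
Your proof is correct, and it rests on exactly the same key step as the paper's: the centrality of $p_1$ in the multiplier algebra of $\cAtil_1$, obtained by Frobenius reciprocity, together with the observation $p_\eps \leq p_1$. The difference lies in how the homological identification is extracted from this. The paper argues concretely with bar resolutions: since $p_1$ is central and $p_\eps \leq p_1$, the bar complex $\cAtil_1 \ot_{\cB} \cdots \ot_{\cB} \cAtil_1 \cdot p_\eps$ coincides term by term with the bar complex $\cA_1 \ot_{\cB_1} \cdots \ot_{\cB_1} \cA_1 \cdot p_\eps$, so the $\Tor$ and $\Ext$ functors agree on the nose. You instead run the abstract argument attached to the splitting $\cAtil_1 = \cA_1 \oplus (1-p_1)\cdot \cAtil_1$ along a central idempotent: the trivial module is supported on the summand $\cA_1$, projective $\cA_1$-resolutions remain projective and exact over $\cAtil_1$, and $\ot$ and $\Hom$ localize to the corner. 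Both derivations are valid; the paper's is shorter, while yours is more modular and would apply verbatim to any algebra with local units split by a central multiplier projection. The genuine added value of your write-up is the final part: the paper's proof stops at the equality of the $\Tor$ and $\Ext$ functors, leaving implicit both the trace-compatible identification $L^2(\cAtil_1)\cdot p_1 = L^2(\cA_1)$ and the fact that, for a module supported on the central projection $p_1$, the L\"uck dimension over $\cAtil_1''$ equals the dimension over $\cA_1'' = p_1\cAtil_1''$ taken with the restricted \emph{unnormalized} semifinite trace. This central-cutdown property is precisely what the Betti-number claim requires, and you state and justify it, correctly refraining from normalizing the restricted trace --- which would be meaningless here, since $\tau(p_1) = \sum_{i \in \Irr(\cC_1)} d(i)$ is infinite whenever $\Irr(\cC_1)$ is infinite. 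One pedantic slip: the embeddings $p(\C^n \ot \cAtil_1'') \hookrightarrow H$ you invoke belong to the description \eqref{eq.pet-def} of the dimension function rather than to \eqref{eq.kpv-def}, but by Remark \ref{rem.dimension-function} the two coincide, so nothing is affected.
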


\begin{proof}
If $i\in\Irr(\mathcal{C})$ and $\alpha,j\in\Irr(\mathcal{C}_1)$, then $(i\alpha,\alpha j)$ can only be nonzero if $i\in\Irr(\mathcal{C}_1)$, by Frobenius reciprocity.
Interchanging the roles of $i$ and $j$, we conclude that $p_1$ is central in the multiplier algebra $M(\cAtil_1)$.
Because $p_\eps\leq p_1$, it follows that
\begin{align*}
\widetilde{\mathcal{A}}_1\otimes_{\mathcal{B}}\cdots\otimes_{\mathcal{B}}\widetilde{\mathcal{A}}_1\cdot p_\eps
&=p_1\cdot\widetilde{\mathcal{A}}_1\cdot p_1\otimes_{\mathcal{B}}\cdots\otimes_{\mathcal{B}}p_1\cdot\widetilde{\mathcal{A}}_1\cdot p_\eps\\
&\cong\mathcal{A}_1\otimes_{\mathcal{B}_1}\cdots\otimes_{\mathcal{B}_1}\mathcal{A}_1\cdot p_\eps \; ,
\end{align*}
and similarly for the right bar resolution.
Since the bar resolutions associated to $\mathcal{A}_1$ and $\widetilde{\mathcal{A}}_1$ are equal, the respective $\Tor$ and $\Ext$ functors must also be the same.
\end{proof}

The following formula, generalizing \cite[Lemma~3.9]{psv-cohom}, is crucial for us since we deduce from it that $\cA$ is a projective $\cAtil_1$-module and also that in the finite-index case, the inclusion $\cAtil_1'' \subset \cA''$ is $\lambda$-Markov in the sense of Definition \ref{def.Markov-inclusion}.

\begin{lemma}\label{lem:psv-lemma39-generalisation}
For $\al \in \Irr(\cC)$, we denote by $e_{\alpha\cdot\mathcal{C}_1}$ the orthogonal projection of $L^2(\cA)$ onto the closed linear span of all $(i \beta, \beta j)$ with $i,j \in \Irr(\cC)$ and $\beta \in \al \cdot \cC_1$.

Then, for all $i\in\Irr(\mathcal{C})$ and $\alpha\in\Irr(\mathcal{C})$, we have that
\begin{equation}
\label{eqn:psv-lemma39-generalisation}
\sum_{j\in\Irr(\mathcal{C})} \sum_{W\in\onb(i\alpha,\alpha j)} d(j) \; W\cdot e_{\mathcal{C}_1} \cdot W^\#=\frac{d([\overline{\alpha}\alpha]_{\mathcal{C}_1})}{d(\alpha)} \; p_i\cdot e_{\alpha\cdot \mathcal{C}_1}
\end{equation}
as operators on $L^2(\mathcal{A})$.
\end{lemma}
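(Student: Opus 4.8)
The plan is to verify the operator identity \eqref{eqn:psv-lemma39-generalisation} by evaluating both sides on the vectors spanning a dense subspace of $L^2(\cA)$, namely the morphisms $V \in (k\gamma, \gamma l)$ with $k, \gamma, l \in \Irr(\cC)$. Both sides are bounded (the tube algebra acts boundedly on $L^2(\cA)$ and $e_{\cC_1}$ is a projection), so matching them on these basis vectors suffices. Left multiplication by $W^\# \in (j\overline{\alpha}, \overline{\alpha} i)$ forces the left index of $V$ to be $i$, and the factor $p_i$ on the right-hand side imposes the same constraint; hence both sides kill $V$ unless $k = i$, and I fix $V \in (i\gamma, \gamma l)$ from here on.

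Next I would run the three-step computation $V \mapsto W^\# \cdot V \mapsto e_{\cC_1}(W^\# \cdot V) \mapsto W \cdot e_{\cC_1}(W^\# \cdot V)$ explicitly. Here $W^\# \cdot V = (W^\# \otimes 1)(1 \otimes V)$ is a morphism whose (reducible) middle object is $\overline{\alpha}\gamma$; applying $e_{\cC_1}$ amounts, through the embedding \eqref{eqn:general-intertw-in-tube-algebra}, to inserting the projection $P^{\overline{\alpha}\gamma}_{\cC_1}$ onto the $\cC_1$-part of $\overline{\alpha}\gamma$ on the middle leg; and the final left multiplication by $W$ tensors a copy of $\alpha$ back on the left. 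The decisive point is that the weighted sum $\sum_{j} \sum_{W \in \onb(i\alpha, \alpha j)} d(j)\,(\,\cdot\,)$ is a completeness relation for the intertwiner spaces, which together with the conjugate-equation solutions $s_\alpha, t_\alpha$ appearing in $W^\#$ pairs the $\alpha$-leg created by $W$ against the $\overline{\alpha}$-leg: after applying the conjugate equations and the completeness relation for $\{W\}$, the inserted projection $P^{\overline{\alpha}\gamma}_{\cC_1}$ is replaced by its partial trace $(\Tr_{\overline{\alpha}} \otimes \id)(P^{\overline{\alpha}\gamma}_{\cC_1})$ acting on the surviving $\gamma$-leg.

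From this point the internal identity from the proof of Lemma \ref{lem:index-well-defined} finishes the argument, since $(\Tr_{\overline{\alpha}} \otimes \id)(P^{\overline{\alpha}\gamma}_{\cC_1}) = \frac{d([\overline{\alpha}\gamma]_{\cC_1})}{d(\gamma)}\, 1_\gamma$. This scalar vanishes exactly when $[\overline{\alpha}\gamma]_{\cC_1} = 0$, that is, when $\gamma \notin \alpha\cdot\cC_1$, which reproduces the projection $e_{\alpha\cdot\cC_1}$ on the right-hand side; and when $\gamma \in \alpha\cdot\cC_1$, the orbit-invariance \eqref{eqn:index-term-orbit-invariant} rewrites $\frac{d([\overline{\alpha}\gamma]_{\cC_1})}{d(\gamma)} = \frac{d([\overline{\alpha}\alpha]_{\cC_1})}{d(\alpha)}$, which is precisely the constant in \eqref{eqn:psv-lemma39-generalisation}. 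Since the surviving contribution is supported on the middle object $\gamma$ and acts on $V$ as this scalar, it equals $\frac{d([\overline{\alpha}\alpha]_{\cC_1})}{d(\alpha)}\, p_i \cdot e_{\alpha\cdot\cC_1}$ evaluated at $V$, as required.

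I expect the main obstacle to be the bookkeeping of the two successive applications of the embedding \eqref{eqn:general-intertw-in-tube-algebra}, first into the middle object $\overline{\alpha}\gamma$ and then into $\alpha\delta$ with $\delta \in \Irr(\cC_1)$: this map is not injective, so literally reading off the $\gamma$-component of the output is delicate, and the quantum-dimension weights ($d(j)$ against $d(\gamma)$) must be tracked carefully through the completeness relation. The cleanest way to sidestep the non-injectivity is to compute the matrix coefficients of the left-hand side of \eqref{eqn:psv-lemma39-generalisation} directly in $L^2(\cA)$ against basis vectors $V' \in (i\gamma', \gamma' l')$, using the inner product $\Tr_{\gamma j}$ from \eqref{eqn:morphism-space-innprod} together with the mutual orthogonality of distinct summands $(i\beta, \beta j)$; in that formulation the claimed support on the single middle object $\gamma$ is automatic, and the entire computation collapses to the one partial-trace evaluation above.
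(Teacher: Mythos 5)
Your proposal is correct and follows essentially the same route as the paper's proof: reduce to evaluating both sides on $V \in (i\gamma,\gamma l)$, realize $W \cdot e_{\cC_1}\cdot W^{\#}$ applied to $V$ as insertion of the projection $P^{\overline{\alpha}\gamma}_{\cC_1}$ on the middle leg via the map \eqref{eqn:general-intertw-in-tube-algebra}, collapse the $d(j)$-weighted sum over $W \in \onb(i\alpha,\alpha j)$ by Frobenius reciprocity and the conjugate equations so that the projection is replaced by its partial trace $(\Tr_{\overline{\alpha}}\otimes\id)(P^{\overline{\alpha}\gamma}_{\cC_1})$, and then invoke Lemma \ref{lem:index-well-defined} to identify the resulting scalar with $d([\overline{\alpha}\alpha]_{\cC_1})/d(\alpha)$ on the orbit $\alpha\cdot\cC_1$ and $0$ off it. The technical subtlety you flag (non-injectivity of \eqref{eqn:general-intertw-in-tube-algebra}) is handled in the paper by verifying the insertion claim directly through an orthonormal-basis expansion, which is a minor variation of the matrix-coefficient workaround you describe.
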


\begin{proof}
Both the left- and the right-hand side of \eqref{eqn:psv-lemma39-generalisation} vanish on $(i_1 \beta, \beta k) \subset L^2(\cA)$ if $i_1 \neq i$. So we fix $k,\beta \in \Irr(\cC)$ and $V \in (i\beta,\beta k)$ and prove that both sides of \eqref{eqn:psv-lemma39-generalisation} agree on $V$.

For every $j\in\Irr(\mathcal{C})$ and $W\in (i\alpha,\alpha j)$, we have
$$(e_{\mathcal{C}_1} \cdot W^\#)(V) = \sum_{\gamma \in \Irr(\cC_1)} \sum_{U \in \onb(\overline{\alpha} \beta,\gamma)} d(\gamma) (1 \ot U^*) (W^\# \ot 1)(1 \ot V)(U \ot 1) \; .$$
We claim that $(e_{\mathcal{C}_1} \cdot W^\#)(V)$ is the image in $\cA$ under the map in \eqref{eqn:general-intertw-in-tube-algebra} of the element
\begin{equation}\label{eq.element-element}
(W^\#\otimes 1)(1 \ot V)(P^{\overline{\alpha}\beta}_{\mathcal{C}_1}\otimes 1) \in (j (\albar \be) , (\albar \be) k) \; .
\end{equation}
The claim follows because that image is given by
\begin{multline*}
\sum_{\gamma \in \Irr(\cC)} \sum_{U \in \onb(\overline{\alpha} \beta,\gamma)} d(\gamma) (1 \ot U^*) (W^\#\otimes 1)(1 \ot V)(P^{\overline{\alpha}\beta}_{\mathcal{C}_1}\otimes 1) (U \ot 1) \\
 = \sum_{\gamma \in \Irr(\cC_1)} \sum_{U \in \onb(\overline{\alpha} \beta,\gamma)} d(\gamma) (1 \ot U^*) (W^\#\otimes 1)(1 \ot V) (U \ot 1) \; ,
\end{multline*}
because $P^{\overline{\alpha}\beta}_{\mathcal{C}_1} U$ equals $0$ when $\gamma \not\in \Irr(\cC_1)$ and equals $U$ when $\gamma \in \Irr(\cC_1)$.

It then follows that $(W\cdot e_{\mathcal{C}_1} \cdot W^\#)(V)$ is the image in $\mathcal{A}$ of the element
\begin{equation}
\label{eqn:psv-lemma39-computation-1}
(W\otimes 1 \otimes 1)(1 \otimes W^\#\otimes 1)(1 \otimes 1 \otimes V)(1 \otimes P^{\overline{\alpha}\beta}_{\mathcal{C}_1}\otimes 1) \in (i (\al\albar \be), (\al \albar \be) k) \; .
\end{equation}
By Frobenius reciprocity,
\[
\{W_Z=(s_\alpha^*\otimes 1 \otimes 1)(1 \otimes Z) \mid Z\in \onb(\overline{\alpha}i\alpha,j)\}
\]
is an orthonormal basis of $(i\alpha,\alpha j)$, and any orthonormal basis can be written in this form.

With this notation, we  find that
\begin{align}
\sum_{j\in\Irr(\mathcal{C})}\sum_{Z\in \onb(\overline{\alpha}i\alpha,j)} &d(j) (W_Z\otimes 1)(1 \otimes W_Z^\#)\notag\\
&=\sum_{j\in\Irr(\mathcal{C})}\sum_{Z\in \onb(\overline{\alpha}i\alpha,j)} d(j)(s_\alpha^*\otimes 1^{\otimes 3})(1 \otimes ZZ^*\otimes 1)(1^{\otimes 3}\otimes s_\alpha)\notag\\
&=s_\alpha^*\otimes 1 \otimes s_\alpha = (1 \ot s_\al)(s_\al^* \ot 1) \; .\label{eqn:psv-lemma39-computation-2}
\end{align}
Combining \eqref{eqn:psv-lemma39-computation-1} and \eqref{eqn:psv-lemma39-computation-2}, we thus obtain
\begin{align*}
\sum_{j\in\Irr(\mathcal{C})} & \sum_{W\in\onb(i\alpha,\alpha j)} d(j) (W\cdot e_{\mathcal{C}_1} \cdot W^\#)(V)\\
&=\sum_{\gamma\in\Irr(\mathcal{C})}\sum_{U\in\onb(\alpha\overline{\alpha}\beta,\gamma)} d(\gamma)(1 \otimes U^*)(1 \otimes s_\alpha\otimes 1)V (s_\alpha^*\otimes 1 \ot 1)(1 \otimes P^{\overline{\alpha}\beta}_{\mathcal{C}_1}\otimes 1)(U\otimes 1) \; .
\end{align*}
Choosing the orthonormal basis of $(\al \albar \be, \gamma)$ by first decomposing $\al \albar$, we see that only one of the $U^*(s_\al \ot 1)$ is nonzero and conclude that
\begin{align*}
\sum_{j\in\Irr(\mathcal{C})} \sum_{W\in\onb(i\alpha,\alpha j)} d(j) (W\cdot e_{\mathcal{C}_1} \cdot W^\#)(V)
&=V(s_\alpha^*\otimes 1 \ot 1)(1 \otimes P^{\overline{\alpha}\beta}_{\mathcal{C}_1}\otimes 1)(s_\alpha\otimes 1 \ot 1)\\
&=V((\Tr_{\overline{\alpha}}\otimes\id)(P^{\overline{\alpha}\beta}_{\mathcal{C}_1})\otimes 1) \; .
\end{align*}
Using Lemma \ref{lem:index-well-defined}, we get that
\[
V((\Tr_{\overline{\alpha}}\otimes\id)(P^{\overline{\alpha}\beta}_{\mathcal{C}_1})\otimes 1)=\frac{d([\overline{\alpha}\beta]_{\mathcal{C}_1}])}{d(\beta)}V=\begin{cases}
\frac{d([\overline{\alpha}\alpha]_{\mathcal{C}_1})}{d(\alpha)}V & \;\;\text{if}\;\; \beta\in\alpha\cdot\mathcal{C}_1 \; ,\\
0 & \;\;\text{otherwise.}
\end{cases}
\]
This concludes the proof of the lemma.
\end{proof}

\begin{proposition}\label{prop:tube-algebra-projective-markov}
Let $\mathcal{C}_1\subset\mathcal{C}$ be a finite-index inclusion of rigid $C^*$-tensor categories. Denote by $\cA$ the tube algebra of $\cC$ and define its subalgebra $\cAtil_1$ as in \eqref{eqn:a1tilde-def}. Then $\mathcal{A}$ is projective as a left $\widetilde{\mathcal{A}}_1$-module and as a right $\cAtil_1$-module. Moreover, the associated inclusion of von Neumann algebras $\widetilde{\mathcal{A}}_1''\subset\mathcal{A}''$ is $\lambda$-Markov with $\lambda=[\mathcal{C}:\mathcal{C}_1]^{-1}$ in the sense of Definition \ref{def.Markov-inclusion}.
\end{proposition}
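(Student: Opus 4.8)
The plan is to produce a single explicit family of elements of $\cA$ that simultaneously serves as a Pimsner-Popa basis for $\cAtil_1'' \subset \cA''$ (yielding the Markov property) and as a dual basis exhibiting $\cA$ as a projective $\cAtil_1$-module. Since $[\cC:\cC_1] < \infty$, the set $\Irr(\cC)$ decomposes into finitely many $\cC_1$-orbits; fix representatives $\alpha_1, \dots, \alpha_k$. For each $r$, all $i,j \in \Irr(\cC)$ and each $W \in \onb(i\alpha_r, \alpha_r j)$, I would put
\[
v_{r,i,j,W} = \Bigl(\tfrac{d(\alpha_r)\,d(j)}{d([\overline{\alpha_r}\alpha_r]_{\cC_1})}\Bigr)^{1/2}\, W \in \cA ,
\]
and write $(v_s)_{s \in S}$ for this family. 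Each $v_s$ lies in $\cA \subset \cA''$, hence acts boundedly by left multiplication, so it is right $\cAtil_1''$-bounded.

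Two computations drive everything, both essentially immediate from Lemma \ref{lem:psv-lemma39-generalisation}. First, for fixed $r$ and $i$, factoring the scalar out of the sum over $j$ and $W$ and applying \eqref{eqn:psv-lemma39-generalisation} gives $\sum_{j,W} v_{r,i,j,W} \cdot e_{\cC_1} \cdot v_{r,i,j,W}^\# = p_i \cdot e_{\alpha_r\cdot\cC_1}$. Since the $p_i$ commute with $e_{\alpha_r\cdot\cC_1}$ and $\sum_i p_i = 1$, summing over $i$ yields $e_{\alpha_r\cdot\cC_1}$, and since the orbits partition $\Irr(\cC)$ we get $\sum_{s} v_s \cdot e_{\cC_1} \cdot v_s^\# = \sum_r e_{\alpha_r\cdot\cC_1} = 1$. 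Second, the intermediate identity \eqref{eqn:psv-lemma39-computation-2} from the proof of that lemma, combined with the embedding \eqref{eqn:general-intertw-in-tube-algebra}, shows $\sum_{j,W} d(j)\, W\cdot W^\# = d(\alpha_r)\,p_i$ (only the trivial subobject of $\alpha_r\overline{\alpha_r}$ contributes); hence $\sum_{j,W} v_{r,i,j,W}\cdot v_{r,i,j,W}^\# = \tfrac{d(\alpha_r)^2}{d([\overline{\alpha_r}\alpha_r]_{\cC_1})}\,p_i$, and summing over $i$ and then over $r$ gives $\sum_s v_s\cdot v_s^\# = [\cC:\cC_1]\,1$ by the index formula \eqref{eqn:general-index}.

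For the Markov statement I would use that $\cAtil_1 \subset \cA$ is trace-preserving, so $e_{\cC_1}$ is the Jones projection for $\cAtil_1'' \subset \cA''$ and, for a right-bounded $v$, the bounded operator $L_v L_v^*$ equals $v\cdot e_{\cC_1}\cdot v^\#$. The first computation then says exactly that $(v_s)$ is a Pimsner-Popa basis ($\sum_s L_{v_s}L_{v_s}^* = 1$), while the second identifies $\sum_s v_s v_s^\# = [\cC:\cC_1]\,1 = \lambda^{-1}1$ with $\lambda = [\cC:\cC_1]^{-1}$; this is precisely Definition \ref{def.Markov-inclusion}. For projectivity, note that $E := e_{\cC_1}|_{\cA}$ is the coordinate projection onto $\cAtil_1$ and is a right $\cAtil_1$-linear conditional expectation, so $E(x^\# \cdot a)\in\cAtil_1$ depends right $\cAtil_1$-linearly on $a$. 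Reading the first computation on a vector $a \in \cA \subset L^2(\cA)$ gives the reconstruction $a = \sum_s v_s\cdot E(v_s^\# \cdot a)$; for a fixed homogeneous $a \in (i_0\beta_0,\beta_0 j_0)$ only the orbit $r$ containing $\beta_0$, the value $i=i_0$, finitely many $j$ and the finite basis $\onb(i_0\alpha_r,\alpha_r j)$ contribute, so the sum is finite. Thus $(v_s, \phi_s)$ with $\phi_s(a)=E(v_s^\#\cdot a)$ is a dual basis and $\cA$ is projective as a right $\cAtil_1$-module; applying the anti-automorphism $\#$, which preserves $\cAtil_1$ and interchanges left and right modules, gives projectivity on the left as well.

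The genuine analytic content has already been isolated in Lemma \ref{lem:psv-lemma39-generalisation}, so the main obstacles here are organizational: choosing the normalization of $v_s$ so that the same family works for both conclusions, verifying the auxiliary identity $\sum_{j,W}d(j)\,W\cdot W^\# = d(\alpha_r)p_i$ cleanly from \eqref{eqn:psv-lemma39-computation-2}, and---most delicately---checking that the reconstruction formula is a genuinely finite algebraic identity on $\cA$ (local finiteness of the dual basis) rather than merely a strong-operator identity on $L^2(\cA)$, so that it can legitimately feed the purely algebraic homological computations that follow.
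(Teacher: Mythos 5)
Your proposal is correct and follows essentially the same route as the paper: the same Pimsner--Popa family built from Lemma \ref{lem:psv-lemma39-generalisation}, the same trace identity $\sum_s v_s \cdot v_s^\# = [\cC:\cC_1]\,1$ (the paper obtains it by applying the lemma with $\cC_1 = \cC$, you by reusing \eqref{eqn:psv-lemma39-computation-2}), and the same splitting for projectivity --- the paper's maps $\theta_1, \theta_2$ on each corner $p_i \cdot \cA_{\al_r \cdot \cC_1}$ are exactly your dual-basis maps $\phi_s$ packaged orbit by orbit. The only cosmetic difference is that you phrase projectivity through the dual basis lemma globally (with the local-finiteness check you correctly flag as the delicate point), whereas the paper exhibits each $p_i \cdot \cA_{\al_r \cdot \cC_1}$ as a direct summand of the explicit projective module $\cV$ and then sums over $i$ and over orbit representatives.
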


\begin{proof}
By symmetry, it suffices to prove that $\cA$ is a projective right $\cAtil_1$-module.

For each $\al \in \Irr(\cC)$, define the subspace $\cA_{\al\cdot \cC_1} \subset \cA$ spanned by all $(i\beta,\beta j)$ with $i,j \in \Irr(\cC)$ and $\beta \in \al \cdot \cC_1$. Note that $\cA_{\al \cdot \cC_1} \subset \cA$ is a right $\cAtil_1$-submodule. As in Lemma \ref{lem:psv-lemma39-generalisation}, denote by $e_{\cC_1}$ the orthogonal projection of $L^2(\cA)$ onto $L^2(\cAtil_1)$. Note that $e_{\cC_1}(\cA) = \cAtil_1$.

Fix $i,\al \in \Irr(\cC)$ and define the projective right $\cAtil_1$-module
$$\cV := \bigoplus_{j \in \Irr(\cC)} \bigl( (i \al , \al j) \ot p_j \cdot \cAtil_1 \bigr) \; .$$
The maps
\begin{align*}
& \theta_1 : p_i \cdot \cA_{\al \cdot \cC_1} \recht \cV : \theta_1(V) = \bigoplus_{j \in \Irr(\cC)} \Bigl(\sum_{W\in\onb(i\alpha,\alpha j)} d(j) W \ot e_{\mathcal{C}_1}(W^\# \cdot V) \Bigr) \; ,\\
& \theta_2 : \cV \recht p_i \cdot \cA_{\al \cdot \cC_1} : \theta_2(W \ot V) = W \cdot V
\end{align*}
are right $\cAtil_1$-linear. By Lemma \ref{lem:psv-lemma39-generalisation}, we have that $\theta_2 \circ \theta_1$ equals a multiple of the identity map on $p_i \cdot \cA_{\al \cdot \cC_1}$. It follows that $p_i \cdot \cA_{\al \cdot \cC_1}$ is a projective right $\cAtil_1$-module.

Taking the (direct) sum over all $i \in \Irr(\cC)$ and over a set of representatives $\alpha_1,\ldots,\alpha_\kappa$ for the $\mathcal{C}_1$-orbits in $\Irr(\mathcal{C})$, we conclude that also $\cA$ is projective as a right $\cAtil_1$-module.

By Lemma \ref{lem:psv-lemma39-generalisation}, we have that
$$\left\{ \; \sqrt{\frac{d(j) \, d(\al_s)}{d([\albar_s \al_s]_{\cC_1})}} \, W \; \middle| \; i,j \in \Irr(\cC), s = 1,\ldots,\kappa, W \in \onb(i\al_s , \al_s j)\; \right\}$$
is a Pimsner-Popa basis for the inclusion $\cAtil_1'' \subset \cA''$. Applying Lemma \ref{lem:psv-lemma39-generalisation} in the case $\cC_1 = \cC$ (and this literally is \cite[Lemma 3.9]{psv-cohom}), we get that
$$\sum_{s = 1}^\kappa \sum_{i,j \in \Irr(\cC)} \sum_{W \in \onb(i\al_s , \al_s j)} \frac{d(j) \, d(\al_s)}{d([\albar_s \al_s]_{\cC_1})} \, W \cdot W^\#
= \sum_{s = 1}^\kappa \sum_{i \in \Irr(\cC)} \frac{d(\al_s)^2}{d([\albar_s \al_s]_{\cC_1})} \, p_i = [\cC : \cC_1] \, 1 \; .$$
So, $\cAtil_1'' \subset \cA''$ is $\lambda$-Markov with $\lambda = [\cC:\cC_1]^{-1}$.
\end{proof}

\begin{proof}[Proof of Theorem \ref{thm:scaling-formula}]
By Lemma \ref{lem.extend-to-Atilde-1}, we have
$$\beta_n^{(2)}(\cC_1) = \dim_{\cAtil_1''} \Tor_n^{\cAtil_1}(L^2(\cAtil_1)^0,\C) \; .$$
By Proposition \ref{prop:tube-algebra-projective-markov}, the left $\cAtil_1$-module $\cA$ is projective. We can thus apply the base change formula for $\Tor$ (see e.g.\ \cite[Proposition~3.2.9]{weibel}) and obtain the isomorphism of left $\cAtil_1''$-modules
$$\Tor_n^{\cAtil_1}(L^2(\cAtil_1)^0,\C) \cong \Tor_n^{\mathcal{A}}(L^2(\widetilde{\mathcal{A}}_1)^0\otimes_{\widetilde{\mathcal{A}}_1} \mathcal{A}, \CC) \; .$$
The left counterpart of Proposition \ref{prop:tube-algebra-projective-markov} provides an inverse for the natural right $\cA$-linear map $L^2(\cAtil_1)^0 \ot_{\cAtil_1} \cA \recht L^2(\cA)^0$, which is thus bijective. We conclude that
$$\Tor_n^{\cAtil_1}(L^2(\cAtil_1)^0,\C) \cong \Tor_n^{\mathcal{A}}(L^2(\cA)^0,\C)$$
as left $\cAtil_1''$-modules.

By Proposition \ref{prop:tube-algebra-projective-markov}, the inclusion $\cAtil_1'' \subset \cA''$ is $\lambda$-Markov with $\lambda = [\cC:\cC_1]^{-1}$ and trace preserving. Using Proposition \ref{prop.dim-scaling-Markov-inclusion}, we conclude that
$$\beta_n^{(2)}(\cC_1) = \dim_{\cAtil_1''} \Tor_n^{\mathcal{A}}(L^2(\cA)^0,\C) = [\cC: \cC_1] \, \dim_{\cA''} \Tor_n^{\mathcal{A}}(L^2(\cA)^0,\C) = [\cC: \cC_1] \,\beta_n^{(2)}(\cC) \; .$$
\end{proof}

Using our results on Markov inclusions, we give the following short proof of Proposition \ref{prop.index-transitive}.

\begin{proof}[{Proof of Proposition \ref{prop.index-transitive}}]
Let $\cC$ be a rigid $C^*$-tensor category with full $C^*$-tensor subcategories $\cC_2 \subset \cC_1 \subset \cC$. Note that $[\cC : \cC_2] < \infty$ if and only if $\Irr(\cC)$ has finitely many $\cC_2$-orbits in the sense of Definition \ref{def:general-index}. Since $\Irr(\cC)$ has finitely many $\cC_2$-orbits if and only if $\Irr(\cC)$ has finitely many $\cC_1$-orbits and $\Irr(\cC_1)$ has finitely many $\cC_2$-orbits, we may assume that the indices $[\cC : \cC_1]$, $[\cC : \cC_2]$ and $[\cC_1: \cC_2]$ are all finite.

Define the $*$-subalgebras $\widetilde{\mathcal{A}}_1 \subset \cA$ and $\widetilde{\mathcal{A}}_2 \subset \cA$ given by \eqref{eqn:a1tilde-def}, associated with $\cC_1 \subset \cC$ and $\cC_2 \subset \cC$, respectively. Note that $\widetilde{\mathcal{A}}_2 \subset \widetilde{\mathcal{A}}_1$. By Proposition \ref{prop:tube-algebra-projective-markov}, the inclusion $\widetilde{\mathcal{A}}_i''\subset\mathcal{A}''$ is $[\cC:\cC_i]^{-1}$-Markov for $i=1,2$. We claim that $\widetilde{\mathcal{A}}_2'' \subset \widetilde{\mathcal{A}}_1''$ is $[\cC_1 : \cC_2]^{-1}$-Markov. This does not literally follow from Proposition \ref{prop:tube-algebra-projective-markov}, but the proof is identical because, choosing representatives $\al_1,\ldots,\al_\kappa$ for the $\cC_2$-orbits in $\Irr(\cC_1)$, Lemma \ref{lem:psv-lemma39-generalisation} implies that
$$\left\{ \; \sqrt{\frac{d(j) \, d(\al_s)}{d([\albar_s \al_s]_{\cC_2})}} \, W \; \middle| \; i,j \in \Irr(\cC), s = 1,\ldots,\kappa, W \in \onb(i\al_s , \al_s j)\; \right\}$$
is a Pimsner-Popa basis for the inclusion $\cAtil_2'' \subset \cAtil_1''$.

Since $\dim_{\cA''}(p_\eps L^2(\cA)) = 1$, a repeated application of Proposition \ref{prop.dim-scaling-Markov-inclusion} gives
\begin{align*}
\dim_{\cAtil_2''}\bigl(p_\eps L^2(\cA)\bigr) & = [\cC : \cC_2] \, \dim_{\cA''}\bigl(p_\eps L^2(\cA)\bigr) = [\cC : \cC_2] \; ,\\
\dim_{\cAtil_2''}\bigl(p_\eps L^2(\cA)\bigr) & = [\cC_1 : \cC_2] \, \dim_{\cAtil_1''}\bigl(p_\eps L^2(\cA)\bigr) = [\cC_1 : \cC_2] \, [\cC : \cC_1] \, \dim_{\cA''}\bigl(p_\eps L^2(\cA)\bigr) \\ &= [\cC : \cC_1] \, [\cC_1 : \cC_2] \; .
\end{align*}
So, the equality $[\cC : \cC_2] = [\cC : \cC_1] \, [\cC_1 : \cC_2]$ is proved.

When $\cC$ has only finitely many irreducible objects and $\cC_1 \subset \cC$ is a full $C^*$-tensor subcategory, we apply this formula to $\cC_2 = \{\eps\}$ and obtain
$$d(\cC) = [\cC : \cC_2] = [\cC : \cC_1] \, [\cC_1 : \cC_2] = [\cC : \cC_1] \, d(\cC_1) \; .$$
So, $[\cC : \cC_1] = d(\cC) / d(\cC_1)$.
\end{proof}

\section{\boldmath $L^2$-Betti numbers for discrete quantum groups}

Following Woronowicz \cite{woronowicz}, a compact quantum group $\bG$ is given by a unital $C^*$-algebra $B$, often suggestively denoted as $B = C(\bG)$, together with a unital $*$-homomorphism $\Delta : B \recht B \otmin B$ to the minimal $C^*$-tensor product satisfying
\begin{itemlist}
\item co-associativity: $(\Delta \ot \id)\Delta = (\id \ot \Delta) \Delta$, and
\item the density conditions: $\Delta(B)(1 \ot B)$ and $\Delta(B) (B \ot 1)$ span dense subspaces of $B \otmin B$.
\end{itemlist}
A compact quantum group $\bG$ admits a unique Haar state, i.e.\ a state $h$ on $B$ satisfying $(\id \ot h)\Delta(b) = (h \ot \id)\Delta(b) = h(b) 1$ for all $b \in B$.

An $n$-dimensional unitary representation $U$ of $\bG$ is a unitary element $U \in M_n(\C) \ot B$ satisfying $\Delta(U_{ij}) = \sum_{k=1}^n U_{ik} \ot U_{kj}$. The category of finite-dimensional unitary representations, denoted as $\Rep(\bG)$, naturally is a rigid $C^*$-tensor category. The coefficients $U_{ij} \in B$ of all finite-dimensional unitary representations of $\bG$ span a dense $*$-subalgebra of $B$, denoted as $\Pol(\bG)$. We have $\Delta(\Pol(\bG)) \subset \Pol(\bG) \ot \Pol(\bG)$, which provides the comultiplication of the Hopf $*$-algebra $\Pol(\bG)$.

The compact quantum group $\bG$ is said to be of Kac type if the Haar state is a trace. This is equivalent with the requirement that for every finite-dimensional unitary representation $U \in M_n(\C) \ot B$, the contragredient $\Ubar \in M_n(\C) \ot B$ defined by $(\Ubar)_{ij} = U_{ij}^*$ is still unitary.

The counit of the Hopf $*$-algebra $\Pol(\bG)$ is the homomorphism $\counit : \Pol(\bG) \recht \C$ given by $\counit(U_{ij}) = 0$ whenever $i \neq j$ and $\counit(U_{ii}) = 1$ for all unitary representations $U \in M_n(\C) \ot B$ of $\bG$.

We denote by $L^2(\bG)$ the Hilbert space completion of $B = C(\bG)$ w.r.t.\ the Haar state $h$. The von Neumann algebra generated by the left action of $B$ on $L^2(\bG)$ is denoted as $L^\infty(\bG)$. The Haar state $h$ extends to a faithful normal state on $L^\infty(\bG)$, which is a trace in the Kac case.

\begin{definition}[{\cite[Definition 1.1]{kyed-quantum-l2hom}}]
Let $\bG$ be a compact quantum group of Kac type. The $L^2$-Betti numbers of the dual discrete quantum group $\bGhat$ are defined as
$$\beta_n^{(2)}(\bGhat) = \dim_{L^\infty(\bG)} \Tor_n^{\Pol(\bG)}(L^2(\bG),\C) \; .$$
\end{definition}

The main result of this section is the following.

\begin{theorem}\label{thm.L2-Betti-quantum-rep-cat}
Let $\bG$ be a compact quantum group of Kac type. Then $\beta_n^{(2)}(\bGhat) = \beta_n^{(2)}(\Rep(\bG))$ for all $n \geq 0$.
\end{theorem}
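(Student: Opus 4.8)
The plan is to route both quantities through the Drinfeld double algebra $\mathcal{D}$ of $\bG$, using the theorem of Neshveyev and Yamashita \cite{neshveyev-yamashita-remarks} that the tube algebra $\cA$ of $\Rep(\bG)$ is strongly Morita equivalent to $\mathcal{D}$. First I would spell out what this equivalence does homologically: realized as an equivalence of representation categories of $\cC=\Rep(\bG)$, it matches the trivial $\cA$-module with the trivial $\mathcal{D}$-module and $L^2(\cA)$ with $L^2(\mathcal{D})$, and being trace-compatible it yields
$$\beta_n^{(2)}(\Rep(\bG)) = \dim_{\mathcal{D}''}\Tor_n^{\mathcal{D}}\bigl(L^2(\mathcal{D})^0,\C\bigr),$$
up to a global normalisation constant to be pinned down at the end. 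This reduces the theorem to comparing the $L^2$-homology of $\mathcal{D}$ with that of $\Pol(\bG)$.

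The bridge is the inclusion $\Pol(\bG)\subset\mathcal{D}$: a $\mathcal{D}$-module is a Yetter--Drinfeld module and therefore in particular a $\Pol(\bG)$-module, so $\Pol(\bG)$ embeds as a $*$-subalgebra over which $\mathcal{D}\cong\Pol(\bG)\otimes c_c(\bGhat)$ is free, in particular projective, the complementary direction being the semisimple algebra $c_c(\bGhat)=\bigoplus_\alpha B(H_\alpha)$. The counit of $\mathcal{D}$ restricts to the counit of $\Pol(\bG)$, so both theories use compatible trivial modules. This puts us in the situation of Section \ref{sec.scaling}, with $\Pol(\bG)\subset\mathcal{D}$ in the role of $\cAtil_1\subset\cA$. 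Following that template, I would apply the base-change isomorphism for $\Tor$ along this projective inclusion,
$$\Tor_n^{\Pol(\bG)}\bigl(L^2(\bG),\C\bigr)\cong\Tor_n^{\mathcal{D}}\bigl(L^2(\bG)\otimes_{\Pol(\bG)}\mathcal{D},\ \C\bigr),$$
and then identify the base-changed coefficient $L^2(\bG)\otimes_{\Pol(\bG)}\mathcal{D}$ with $L^2(\mathcal{D})^0$ as a right $\mathcal{D}$-module. This last identification is the crux: it says that the regular representation base-changes to the regular representation, and it rests on the trace of $\mathcal{D}$ restricting to the Haar trace on $\Pol(\bG)$.

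It remains to reconcile the two dimension functions. The inclusion $L^\infty(\bG)=\Pol(\bG)''\subset\mathcal{D}''$ is neither trace-preserving nor of finite index, so I would show it is $\lambda$-Markov in the sense of Definition \ref{def.Markov-inclusion} and invoke Proposition \ref{prop.dim-scaling-Markov-inclusion} to pass between $\dim_{L^\infty(\bG)}$ and $\dim_{\mathcal{D}''}$, finally checking that this scaling constant cancels the one from the Morita step so that the equality holds with constant $1$.

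The hard part will be exactly this interplay of coefficients and dimensions: establishing $L^2(\bG)\otimes_{\Pol(\bG)}\mathcal{D}\cong L^2(\mathcal{D})^0$ and controlling the non-trace-preserving, infinite-index comparison of L\"uck dimensions together with its constant. It is also precisely here that the restriction to $L^2$-coefficients is indispensable. The same base change sends the \emph{trivial} $\Pol(\bG)$-module to $\C\otimes_{\Pol(\bG)}\mathcal{D}\cong c_c(\bGhat)$, which is \emph{not} the trivial $\mathcal{D}$-module; so for trivial coefficients the two homologies genuinely differ, in accordance with the observation that $H_\bullet(\bGhat,\C)$ separates the $A_o(k)$ whereas $H_\bullet(\Rep(\bG),\C)$ does not. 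The Kac assumption enters through the traciality of the Haar state, which is what makes $L^2(\bG)$ a trace module and renders the L\"uck dimensions available on both sides.
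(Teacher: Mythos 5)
Your proposal follows essentially the same route as the paper's proof: the Morita equivalence with the Drinfeld double $\mathcal{D}$ (which respects traces and counits, so no normalisation constant actually arises), base change for $\Tor$ along the free inclusion $\Pol(\bG)\subset\mathcal{D}$, the identification $L^2(\bG)\otimes_{\Pol(\bG)}\mathcal{D}=L^2(\bG)\otimes c_c(\bGhat)=L^2(\mathcal{D})^0$, and the comparison of dimension functions via Proposition \ref{prop.dim-scaling-Markov-inclusion}. The points you flag as hard are settled exactly as you anticipate: the rescaled matrix units $d(U)^{-1/2}E_{U,ij}$ form a Pimsner--Popa basis making $L^\infty(\bG)\subset\mathcal{D}''$ $1$-Markov, and since the restricted trace fails to be semifinite, one checks (via the bar resolution and local finiteness of $L^2(\mathcal{D})^0$) that $\Tor_n^{\mathcal{D}}(L^2(\mathcal{D})^0,\C)$ is a locally finite $\mathcal{D}''$-module, which is the hypothesis under which that proposition applies.
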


The equality of $L^2$-Betti numbers in Theorem \ref{thm.L2-Betti-quantum-rep-cat} is surprising. There is no general identification of (co)homology of $\bGhat$ with (co)homology of $\Rep(\bG)$. Indeed, by \cite[Theorem 3.2]{collins-haertel-thom}, homology with trivial coefficients distinguishes between the quantum groups $A_o(k)$, but does not distinguish between their representation categories $\Rep(A_o(k))$ by Corollary \ref{cor.homology-trivial-coefficients} below. Secondly, for the definition of the $L^2$-Betti numbers of a discrete quantum group, the Kac assumption is essential, since we need a trace to measure dimensions. By Theorem \ref{thm.L2-Betti-quantum-rep-cat}, we now also have $L^2$-Betti numbers for non Kac type discrete quantum groups.

\begin{proof}[Proof of Theorem \ref{thm.L2-Betti-quantum-rep-cat}]
Define the $*$-algebra
$$c_c(\bGhat) = \bigoplus_{U \in \Irr(\bG)} M_{d(U)}(\C) \; .$$
Drinfeld's quantum double algebra of $\bG$ is the $*$-algebra $\cA$ with underlying vector space $\Pol(\bG) \ot c_c(\bGhat)$ and product determined as follows. We view $c_c(\bGhat) \subset \Pol(\bG)^*$ in the usual way: the components of $\om \in c_c(\bGhat)$ are given by $\om_{U,ij} = \om(U_{ij})$ for all $U \in \Irr(\bG)$ and $i,j \in \{1,\ldots,d(U)\}$. We write $a \om$ instead of $a \ot \om$ for all $a \in \Pol(\bG)$ and $\om \in c_c(\bGhat)$. The product on $\cA$ is then determined by the following formula:
$$\om \; U_{ij} = \sum_{k,l=1}^n U_{kl} \; \om(U_{ik} \cdot U_{jl}^*)$$
for every unitary representation $U \in M_n(\C) \ot B$. The counit on $\cA$ is given by $\counit(a \om) = \counit(a) \om(1)$ for all $a \in \Pol(\bG)$ and $\om \in c_c(\bGhat) \subset \Pol(\bG)^*$.

Since $\bG$ is of Kac type, the Haar weight $\tau$ on $\cA$ is a trace and it is given by
$$\tau(a \om) = h(a) \sum_{U \in \Irr(\bG)} \sum_{i=1}^{d(U)} d(U) \, \om_{U,ii} \; .$$
We denote by $\cA''$ the von Neumann algebra completion of $\cA$ acting on $L^2(\cA,\tau)$. By \cite[Theorem 2.4]{neshveyev-yamashita-remarks}, the tube algebra of $\Rep(\bG)$ is strongly Morita equivalent with the quantum double algebra $\cA$ defined in the previous paragraph. This strong Morita equivalence respects the counit and the traces on both algebras. Therefore,
$$\beta_n^{(2)}(\Rep(\bG)) = \dim_{\cA''} \Tor_n^{\cA}(L^2(\cA)^0,\C) \; ,$$
where $L^2(\cA)^0$ equals the span of $L^2(\cA) \cdot c_c(\bGhat)$.

On the other hand,
$$\beta_n^{(2)}(\bGhat) = \dim_{L^\infty(\bG)} \Tor_n^{\Pol(\bG)}(L^2(\bG),\C) \; .$$
Since $\cA$ is a free left $\Pol(\bG)$-module, the base change formula for $\Tor$ again applies and gives the isomorphism of left $L^\infty(\bG)$-modules
$$\Tor_n^{\Pol(\bG)}(L^2(\bG),\C) \cong \Tor_n^{\cA}(L^2(\bG) \ot_{\Pol(\bG)} \cA , \C) \; .$$
Since $L^2(\bG) \ot_{\Pol(\bG)} \cA = L^2(\bG) \ot c_c(\bGhat) = L^2(\cA)^0$, we conclude that
$$\beta_n^{(2)}(\bGhat) = \dim_{L^\infty(\bG)} \Tor_n^{\cA}(L^2(\cA)^0,\C) \; .$$
Denoting by $E_{U,ij}$ the natural matrix units for $c_c(\bGhat)$, we see that the elements $\{d(U)^{-1/2} E_{U,ij} \mid U \in \Irr(\bG) , i,j = 1,\ldots, d(U)\}$ form a Pimsner-Popa basis for the (non trace-preserving) inclusion $L^\infty(\bG) \subset \cA''$. It follows that this inclusion is $1$-Markov. Since the left $\cA''$-module $L^2(\cA)^0$ is locally finite (in the sense of Definition \ref{def.locally-finite} and using the example given after Definition \ref{def.locally-finite}), using a bar resolution, one gets that also the left $\cA''$-module $\Tor_n^{\cA}(L^2(\cA)^0,\C)$ is locally finite. Proposition \ref{prop.dim-scaling-Markov-inclusion} then implies that
$$\dim_{L^\infty(\bG)} \Tor_n^{\cA}(L^2(\cA)^0,\C) = \dim_{\cA''} \Tor_n^{\cA}(L^2(\cA)^0,\C)$$
and the theorem is proved.
\end{proof}

Given a compact quantum group $\bG$, all Hopf $*$-subalgebras of $\Pol(\bG)$ are of the form $\Pol(\bH) \subset \Pol(\bG)$, where $\Rep(\bH) \subset \Rep(\bG)$ is a full $C^*$-tensor subcategory. We say that $\Pol(\bH) \subset \Pol(\bG)$ is of finite index if $\Rep(\bH) \subset \Rep(\bG)$ is of finite index in the sense of Definition \ref{def:general-index} and we define the index
$$[\Pol(\bG) : \Pol(\bH)] := [\Rep(\bG) : \Rep(\bH)]$$
using Definition \ref{def:general-index}.

For special types of finite-index Hopf $*$-subalgebras $\Pol(\bH) \subset \Pol(\bG)$, the scaling formula between $\beta_n^{(2)}(\bHhat)$ and $\beta_n^{(2)}(\bGhat)$ was proved in \cite[Theorem D]{bkr16}. Combining Theorems \ref{thm.L2-Betti-quantum-rep-cat} and \ref{thm:scaling-formula}, it holds in general.

\begin{corollary}\label{cor.scaling-compact-quantum}
Let $\bG$ be a compact quantum group of Kac type. Let $\Pol(\bH) \subset \Pol(\bG)$ be a finite-index Hopf $*$-subalgebra. Then,
$$\beta_n^{(2)}(\bHhat) = [\Pol(\bG) : \Pol(\bH)] \; \beta_n^{(2)}(\bGhat) \quad\text{for all}\;\; n \geq 0 \; .$$
\end{corollary}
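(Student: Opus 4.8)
The plan is to deduce this directly by combining the monoidal-invariance result, Theorem \ref{thm.L2-Betti-quantum-rep-cat}, with the categorical scaling formula, Theorem \ref{thm:scaling-formula}, so that the entire argument becomes a short concatenation of equalities once one preliminary point is settled. The only genuinely non-formal step is to check that $\beta_n^{(2)}(\bHhat)$ is even defined: the definition of $L^2$-Betti numbers for a discrete quantum group requires the Kac assumption, so I first have to verify that $\bH$ inherits Kac type from $\bG$.

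To see this, I would argue that the Haar state $h$ of $\bG$ restricts to the Haar state of $\bH$. Indeed, since $\Pol(\bH) \subset \Pol(\bG)$ is a Hopf $*$-subalgebra, we have $\Delta(\Pol(\bH)) \subset \Pol(\bH) \ot \Pol(\bH)$, and for every $b \in \Pol(\bH)$ the invariance identity $(\id \ot h)\Delta(b) = (h \ot \id)\Delta(b) = h(b)1$ shows that the restriction $h|_{\Pol(\bH)}$ is a left- and right-invariant state on $\Pol(\bH)$. By uniqueness of the Haar state, $h|_{\Pol(\bH)}$ is the Haar state of $\bH$. As $\bG$ is of Kac type, $h$ is a trace, hence so is its restriction, and therefore $\bH$ is of Kac type as well. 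This makes $\beta_n^{(2)}(\bHhat)$ well defined.

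With this in hand, I would invoke the description of finite-index Hopf $*$-subalgebras recorded just before the corollary: the hypothesis that $\Pol(\bH) \subset \Pol(\bG)$ has finite index means precisely that $\Rep(\bH) \subset \Rep(\bG)$ is a finite-index inclusion of rigid $C^*$-tensor categories, with $[\Pol(\bG):\Pol(\bH)] = [\Rep(\bG):\Rep(\bH)]$ by definition. Theorem \ref{thm.L2-Betti-quantum-rep-cat}, applied separately to the Kac type quantum groups $\bH$ and $\bG$, converts both sides of the desired identity into categorical $L^2$-Betti numbers, and Theorem \ref{thm:scaling-formula} supplies the scaling for the inclusion $\Rep(\bH) \subset \Rep(\bG)$. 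Concretely, the argument reads
\[
\beta_n^{(2)}(\bHhat) = \beta_n^{(2)}(\Rep(\bH)) = [\Rep(\bG):\Rep(\bH)] \, \beta_n^{(2)}(\Rep(\bG)) = [\Pol(\bG):\Pol(\bH)] \, \beta_n^{(2)}(\bGhat),
\]
where the first and last equalities use Theorem \ref{thm.L2-Betti-quantum-rep-cat} (for $\bH$ and for $\bG$, respectively), the middle equality is Theorem \ref{thm:scaling-formula}, and the final rewriting of the index uses its definition for Hopf $*$-subalgebras.

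I expect no serious obstacle here: once the Kac-type descent to $\bH$ is verified, everything is a formal chaining of the two main theorems together with the index convention. If anything requires care it is precisely the identification $h|_{\Pol(\bH)} = h_{\bH}$ via uniqueness of the Haar state, since this is what guarantees both that $\beta_n^{(2)}(\bHhat)$ makes sense and that Theorem \ref{thm.L2-Betti-quantum-rep-cat} is applicable to $\bH$; the scaling and monoidal-invariance inputs are then applied verbatim.
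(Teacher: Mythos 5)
Your proposal is correct and follows exactly the paper's own argument: the paper proves this corollary by combining Theorem \ref{thm.L2-Betti-quantum-rep-cat} (applied to both $\bH$ and $\bG$) with the scaling formula of Theorem \ref{thm:scaling-formula}, via the same chain of equalities you write. Your explicit verification that $\bH$ inherits the Kac property (restriction of the Haar state plus uniqueness) is a point the paper leaves implicit, and it is correctly handled.
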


\begin{remark}
Of course, Corollary \ref{cor.scaling-compact-quantum} can be proven directly, using the same methods as in the proof of Theorem \ref{thm:scaling-formula}. Choosing representatives $U_1,\ldots,U_\kappa$ for the right $\Rep(\bH)$-orbits in $\Irr(\bG)$, the appropriate multiples of $(U_s)_{ij}$ form a Pimsner-Popa basis for the inclusion $L^\infty(\bH) \subset L^\infty(\bG)$. As in the proof of Proposition \ref{prop:tube-algebra-projective-markov}, it follows that $\Pol(\bG)$ is a projective $\Pol(\bH)$-module and that $L^\infty(\bH) \subset L^\infty(\bG)$ is a $\lambda$-Markov inclusion with $\lambda = [\Pol(\bG) : \Pol(\bH)]^{-1}$.
\end{remark}

\section{\boldmath Computing $L^2$-Betti numbers of representation categories}

For any invertible matrix $F\in \mathrm{GL}_m(\CC)$, the \textit{free unitary quantum group} $A_u(F)$ is the universal $C^*$-algebra with generators $U_{ij}$, $1\leq i,j\leq m$, and relations making the matrices $U$ and $F\Ubar F^{-1}$ unitary representations of $A_u(F)$, see \cite{vandaele-wang-uqg}. Here $(\Ubar)_{ij}=(U_{ij})^*$. We denote by $A_u(m)$ the free unitary quantum group given by the $m \times m$ identity matrix. The following is the main result of this section.

\begin{theorem}\label{thm.L2-Betti-Rep-Au}
Let $F\in \mathrm{GL}_m(\CC)$ be an invertible matrix and $\cC = \Rep(A_u(F))$ the representation category of the free unitary quantum group $A_u(F)$. Then,
$$\beta_1^{(2)}(\cC) = 1 \quad\text{and}\quad \beta_n^{(2)}(\cC) = 0 \quad\text{for all}\;\; n \neq 1 \; .$$
\end{theorem}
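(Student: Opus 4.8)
The plan is to mimic the computation of $\beta_\bullet^{(2)}(\mathbb{F}_2)$ for the free group on two generators, the two ``generators'' here being the conjugate pair $u,\ubar$ that freely generate $\cC$, whose irreducibles are indexed by the free monoid $\NN\ast\NN$. First I would pass from the tube algebra $\cA$ of $\cC=\Rep(A_u(F))$ to the Drinfeld double algebra $\mathcal{D}$ of $A_u(F)$, using the strong Morita equivalence of \cite{neshveyev-yamashita-remarks} exactly as in the proof of Theorem \ref{thm.L2-Betti-quantum-rep-cat}. Since this equivalence respects the counits and the traces, it identifies $\beta_n^{(2)}(\cC)$ with $\dim_{\mathcal{D}''}\Tor_n^{\mathcal{D}}(L^2(\mathcal{D})^0,\C)$, so that the problem becomes one of resolving the trivial $\mathcal{D}$-module $\C$ and measuring the resulting homology with the L\"uck dimension attached to the canonical trace on $\mathcal{D}''$. (Since the resolution below exists uniformly in $F$, this route also shows that the answer is independent of $F$, as it must be by monoidal invariance.)

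The heart of the argument is to feed in the free Yetter--Drinfeld resolution of \cite{bichon-yetter-drinfeld}. The universality of $A_u(F)$ --- it is the free object generated by $u$ and $\ubar$ --- should yield a resolution $0\to P_1\to P_0\to\C\to 0$ of the counit of length one, by finitely generated free $\mathcal{D}$-modules, in complete parallel with the standard resolution $0\to \C\mathbb{F}_2\oplus\C\mathbb{F}_2\to\C\mathbb{F}_2\to\C\to 0$, whose rank-two middle term records the two free generators. Once such a length-one resolution is in place, the additivity and continuity of $\dim_{\mathcal{D}''}$ (Remark \ref{rem.dim-function-continuity-additivity}) give two things for free: $\beta_n^{(2)}(\cC)=0$ for every $n\geq 2$, and the Euler characteristic relation $\beta_0^{(2)}(\cC)-\beta_1^{(2)}(\cC)=\dim_{\mathcal{D}''}\overline{L^2(\mathcal{D})^0\otimes_{\mathcal{D}}P_0}-\dim_{\mathcal{D}''}\overline{L^2(\mathcal{D})^0\otimes_{\mathcal{D}}P_1}$.

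It then remains to pin down the two surviving degrees. For $\beta_0^{(2)}(\cC)$ I would use that, for $m\geq 2$, the quantum group $A_u(F)$ is not coamenable, so that the trivial representation of $\cC$ is not weakly contained in the regular representation; this forces the degree-zero homology, namely the coinvariants of $L^2(\mathcal{D})^0$, to have vanishing $\mathcal{D}''$-dimension, i.e.\ $\beta_0^{(2)}(\cC)=0$. For the Euler characteristic, an explicit computation of the $\mathcal{D}''$-dimensions of the two completed free modules --- the trace-weighted analogue of counting two generators against a single relation-free term --- gives $\beta_0^{(2)}(\cC)-\beta_1^{(2)}(\cC)=-1$. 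Combined with $\beta_0^{(2)}(\cC)=0$ this yields $\beta_1^{(2)}(\cC)=1$, completing the computation.

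The main obstacle is the middle step: one must verify that Bichon's purely algebraic Yetter--Drinfeld resolution really consists of finitely generated projective $\mathcal{D}$-modules whose $L^2$-completions still compute $\Tor_\bullet^{\mathcal{D}}$, and then carry out the quantum-dimension bookkeeping that evaluates $\dim_{\mathcal{D}''}\overline{L^2(\mathcal{D})^0\otimes_{\mathcal{D}}P_1}$ (the analogue of ``rank equal to two''), since these ranks are weighted by the categorical traces and not by vector-space dimensions. A second, more routine, delicate point is the clean justification that $\beta_0^{(2)}(\cC)=0$; if one prefers to avoid invoking non-coamenability, this can instead be extracted from the fact that the completed differential $\overline{L^2(\mathcal{D})^0\otimes_{\mathcal{D}}P_1}\to\overline{L^2(\mathcal{D})^0\otimes_{\mathcal{D}}P_0}$ has full image in dimension, so that its cokernel vanishes.
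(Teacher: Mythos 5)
There is a genuine gap, and it sits at the heart of your argument: the claimed length-one resolution $0\recht P_1\recht P_0\recht\C\recht 0$ of the counit by finitely generated free modules over the Drinfeld double (equivalently, over the tube algebra $\cA$) does not exist. Freeness of the fusion rules --- the fact that $\Irr(\cC)$ is the free monoid $\NN*\NN$ --- is a statement about the fusion $*$-algebra, not about the tube algebra, and it does not make $\cC$ homologically one-dimensional. The paper's own results refute the heuristic: $\Rep(A_o(F))$ is likewise \emph{freely} generated, by a single self-conjugate object, yet by Theorem \ref{thm.resolution-TLJ} and Corollary \ref{cor.homology-trivial-coefficients} its minimal projective resolution has length $3$ and $H_3(\Rep(A_o(F)),\C)=\C\neq 0$, so no resolution of length $\leq 2$ can exist there. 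The same is true for $A_u(F)$: the resolution in \cite{bichon-yetter-drinfeld} that you propose to ``feed in'' is a length-\emph{three} resolution, and it is constructed for $A_o(F)=\SU_q(2)$, not for $A_u(F)$ (the cohomological dimension of $\Pol(A_u(F))$ and of its Drinfeld double is also $3$, by later work of Bichon). Consequently the vanishing of $\beta_2^{(2)}$ and $\beta_3^{(2)}$ is not the formal consequence of ``freeness'' you make it out to be; it is a genuine computation, and your Euler-characteristic bookkeeping (``$\beta_0^{(2)}-\beta_1^{(2)}=-1$'') has no resolution to hang on. Two further, smaller problems: for general $F$ the quantum group $A_u(F)$ is not of Kac type, so the trace-respecting Morita transfer you borrow from the proof of Theorem \ref{thm.L2-Betti-quantum-rep-cat} does not apply verbatim (the Haar-type weight on the double is no longer a trace, and one must work with the tube-algebra trace directly); and $\beta_0^{(2)}(\cC)=0$ follows simply from $\Irr(\cC)$ being infinite (\cite[Corollary 9.2]{psv-cohom}) --- coamenability is the wrong invariant here, since infinite amenable categories also have $\beta_0^{(2)}=0$.

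For comparison, the paper's proof avoids resolutions for $A_u(F)$ altogether. It first reduces to $F\overline{F}=\pm 1$ using \cite[Theorem 6.2]{bdrv-06}, then invokes the graded-twisting exact sequences of \cite{bny15,bny16} to exhibit a \emph{common index-two full subcategory} of $\Rep(A_u(F))$ and of the free product $\Rep(A_o(F))*\Rep(A_o(F))$. The scaling formula (Theorem \ref{thm:scaling-formula}), applied on both sides, shows the two categories have identical $L^2$-Betti numbers, and the conclusion follows from the free product formula \cite[Corollary 9.5]{psv-cohom} together with the vanishing $\beta_n^{(2)}(\Rep(A_o(F)))=0$ of \cite[Theorem 9.9]{psv-cohom}. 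If you want to salvage a resolution-based approach, you would need the correct length-three Yetter--Drinfeld resolution for $A_u(F)$ and then prove dimension-exactness after applying $L^2(\cA)^0\otimes_{\cA}-$, which is substantially harder than the route above.
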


For $F\in\mathrm{GL}_m(\CC)$ with $F\overline{F}\in\RR 1$, the \textit{free orthogonal quantum group} $A_o(F)$ is the universal $C^*$-algebra with generators $U_{ij}$, $1\leq i,j\leq m$, and relations such that $U$ is unitary and $U=F \Ubar F^{-1}$. We denote by $A_o(m)$ the free orthogonal quantum group given by the $m \times m$ identity matrix. Also note that $\SU_q(2) = A_o\bigl(\begin{smallmatrix} 0 & -q \\ 1 & 0\end{smallmatrix}\bigr)$ for all $q \in [-1,1] \setminus \{0\}$.

Using Theorem \ref{thm.L2-Betti-quantum-rep-cat} in combination with several results of \cite{psv-cohom}, we get the following computations of $L^2$-Betti numbers of discrete quantum groups.

\begin{theorem}\label{thm.L2-Betti-discrete-some-computations}
\begin{enumlist}
\item\label{stat.i} ({\cite[Theorem A]{bkr16} and \cite[Theorem A]{kyed-raum}}) For all $m \geq 2$, we have that $\beta_n^{(2)}(\widehat{A_u(m)})$ is equal to $1$ if $n = 1$ and equal to $0$ if $n \neq 1$.
\item\label{stat.ii} ({\cite[Theorem 1.2]{collins-haertel-thom} and \cite[Corollary 5.2]{vergnioux-paths-L2-cohomology}}) We have that $\beta_n^{(2)}(\widehat{A_o(m)}) = 0$ for all $m \geq 2$ and $n \geq 0$.
\item\label{stat.iii} Let $(B,\tau)$ be a finite-dimensional $C^*$-algebra with its Markov trace. Assume that $\dim B \geq 4$ and let $A_{\text{\rm aut}}(B,\tau)$ be the quantum automorphism group.  Then, $\beta_n^{(2)}(\widehat{A_{\text{\rm aut}}(B,\tau)}) = 0$ for all $n \geq 0$. In particular, all $L^2$-Betti numbers vanish for the duals of the quantum symmetry groups $S_m^+$ with $m \geq 4$.
\item\label{stat.iv} Let $\bG = \bH \wr_* \bF$ be the free wreath product of a nontrivial Kac type compact quantum group $\bH$ and a quantum subgroup $\bF$ of $S_m^+$ that is acting ergodically on $m$ points, $m \geq 2$ (see Remark \ref{rem.necessity} for definitions and comments). Then $\bGhat$ has the same $L^2$-Betti numbers as the free product $\bHhat * \bFhat$, namely
    $$\beta_n^{(2)}(\bGhat) = \begin{cases} \beta_n^{(2)}(\bHhat) + \beta_n^{(2)}(\bFhat) &\;\;\text{if $n \geq 2$,}\\
    \beta_1^{(2)}(\bHhat) + \beta_1^{(2)}(\bFhat) + 1 - (\beta_0^{(2)}(\bHhat)+\beta_0^{(2)}(\bFhat)) &\;\;\text{if $n =1$,}\\
    0 &\;\;\text{if $n =0$.}\end{cases}$$
\item\label{stat.v} In particular, for the duals of the hyperoctahedral quantum group $H_m^+$, $m \geq 4$, and the series of quantum reflection groups $H^{s+}_m$, $s \geq 2$ (see \cite{banica-verngioux-reflection}), all $L^2$-Betti numbers vanish, except $\beta_1^{(2)}$, which is resp.\ equal to $1/2$ and $1-1/s$.
\end{enumlist}
\end{theorem}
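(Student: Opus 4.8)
The unifying principle is Theorem \ref{thm.L2-Betti-quantum-rep-cat}: for a Kac type compact quantum group $\bG$ one has $\beta_n^{(2)}(\bGhat) = \beta_n^{(2)}(\Rep(\bG))$, so these numbers are \emph{monoidal invariants}. My plan is therefore to reduce each assertion to the computation of $\beta_n^{(2)}$ of a single representation category and to exploit that monoidally equivalent compact quantum groups share their $L^2$-Betti numbers. The quantum groups appearing in (i), (ii), (v) and the quantum group $S_m^+$ in (iii) are of Kac type, so the left-hand sides are the $L^2$-Betti numbers of \cite{kyed-quantum-l2hom}; for a general $A_{\text{\rm aut}}(B,\tau)$ one simply reads the left-hand side through the extension $\beta_n^{(2)}(\bGhat):=\beta_n^{(2)}(\Rep(\bG))$ legitimised by Theorem \ref{thm.L2-Betti-quantum-rep-cat}.

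Statements (i)--(iii) are then monoidal-equivalence bookkeeping. For (i), $A_u(m)=A_u(I_m)$ is of Kac type, so Theorem \ref{thm.L2-Betti-quantum-rep-cat} gives $\beta_n^{(2)}(\widehat{A_u(m)}) = \beta_n^{(2)}(\Rep(A_u(I_m)))$, and Theorem \ref{thm.L2-Betti-Rep-Au} evaluates the right-hand side. For (ii) I would quote the computations \cite{collins-haertel-thom, vergnioux-paths-L2-cohomology}; read through Theorem \ref{thm.L2-Betti-quantum-rep-cat}, these also record the vanishing of $\beta_n^{(2)}(\Rep(A_o(m))) = \beta_n^{(2)}(\TLJ)$. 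For (iii), the input is Banica's monoidal equivalence identifying $\Rep(A_{\text{\rm aut}}(B,\tau))$ (for $\dim B \geq 4$) with a Temperley--Lieb--Jones type category; by monoidal invariance the vanishing is then reduced to that of the $L^2$-Betti numbers of $\TLJ$, which follows from the length-$3$ resolution of the tube algebra of $\TLJ$ discussed in the introduction together with the free orthogonal computations. The quantum permutation group is the Kac special case $B=\C^m$, $S_m^+ = A_{\text{\rm aut}}(\C^m)$.

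The genuine content is (iv), which I expect to be the main obstacle. Two ingredients are needed. First, a structural result: when $\bF$ acts ergodically on the $m$ points, the representation category $\Rep(\bH \wr_* \bF)$ should have the same $L^2$-Betti numbers as the free product category $\Rep(\bH) * \Rep(\bF) = \Rep(\bH * \bF)$, i.e.\ the free wreath product is, homologically, a free product. Second, the free product (Mayer--Vietoris) formula for the $L^2$-Betti numbers of rigid $C^*$-tensor categories from \cite{psv-cohom}, which produces exactly the three cases of the asserted formula: additivity for $n \geq 2$, the degree-$1$ term $\beta_1^{(2)}(\bHhat) + \beta_1^{(2)}(\bFhat) + 1 - \beta_0^{(2)}(\bHhat) - \beta_0^{(2)}(\bFhat)$, and the vanishing of $\beta_0^{(2)}$. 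Translating both categories back through Theorem \ref{thm.L2-Betti-quantum-rep-cat} (all quantum groups being of Kac type) identifies these with $\beta_n^{(2)}(\bGhat)$ and with the numbers of $\bHhat * \bFhat$. The hard part is establishing the free product decomposition of $\Rep(\bH \wr_* \bF)$ and pinning down precisely where the ergodicity of the action of $\bF$ on $m$ points enters.

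Finally, (v) is a substitution into (iv). Writing $H_m^{s+} = \Z_s \wr_* S_m^+$, so that $\bH = \Z_s$ and $\bF = S_m^+$, I insert $\beta_0^{(2)}(\widehat{\Z_s}) = 1/s$ and $\beta_n^{(2)}(\widehat{\Z_s}) = 0$ for $n \geq 1$ (the $L^2$-Betti numbers of the finite quantum group $\widehat{\Z_s}$ of order $s$) together with the vanishing of all $\beta_n^{(2)}(\widehat{S_m^+})$ from (iii). The degree-$1$ case of (iv) then gives $\beta_1^{(2)}(\widehat{H_m^{s+}}) = 1 - 1/s$, while all other Betti numbers vanish; the special case $s = 2$ recovers $\beta_1^{(2)}(\widehat{H_m^+}) = 1/2$.
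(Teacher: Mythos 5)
Parts (i), (ii) and (v) of your proposal are correct and essentially follow the paper: (i) is Theorem \ref{thm.L2-Betti-quantum-rep-cat} combined with Theorem \ref{thm.L2-Betti-Rep-Au}, and (v) is exactly the substitution of $H_m^{s+} = (\Z/s\Z) \wr_* S_m^+$ into (iv), with $\beta_0^{(2)}(\widehat{\Z/s\Z}) = 1/s$. (For (ii) you quote the original quantum group computations, whereas the paper runs the implication in the other direction, deducing (ii) from the categorical vanishing in \cite[Theorem 9.9]{psv-cohom} via Theorem \ref{thm.L2-Betti-quantum-rep-cat}; both are legitimate.)

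There is, however, a genuine gap in (iii). The category $\Rep(A_{\text{\rm aut}}(B,\tau))$ is \emph{not} monoidally equivalent to $\Rep(\SU_q(2))$; for $\dim B \geq 4$ it is monoidally equivalent to the natural \emph{index-two} full $C^*$-tensor subcategory of $\Rep(\SU_q(2))$ (the even part, generated by $v \otimes v$), and these two categories have different fusion rules, so no monoidal equivalence between them can exist. Hence ``monoidal invariance'' cannot reduce (iii) to the vanishing of $\beta_n^{(2)}(\TLJ)$. The missing step is the scaling formula of Theorem \ref{thm:scaling-formula}: writing $\cC_1 \subset \cC = \Rep(\SU_q(2))$ for the index-two subcategory, one gets $\beta_n^{(2)}(\cC_1) = [\cC : \cC_1]\, \beta_n^{(2)}(\cC) = 2 \cdot 0 = 0$, and this, together with monoidal invariance of $\beta_n^{(2)}$ applied to $\Rep(A_{\text{\rm aut}}(B,\tau)) \simeq \cC_1$, is how the paper concludes. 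Your proposal never invokes Theorem \ref{thm:scaling-formula}, so this step would fail as written.

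The step you yourself flag as ``the hard part'' of (iv) is indeed the heart of the matter, and it is not quite the statement you propose. One should not expect $\Rep(\bH \wr_* \bF)$ to be monoidally equivalent to $\Rep(\bH) * \Rep(\bF) = \Rep(\bH * \bF)$ (the fusion rules of free wreath products, cf.\ \cite{lemeux-fusion-free-wreath-product}, are quite different from those of free products). What the paper uses is \cite[Theorem B and Remark 7.6]{tarrago-wahl-wreath}: when $\bF \subset S_m^+$ acts \emph{ergodically} on the $m$ points (this is exactly where ergodicity enters, as the hypothesis of that theorem; see also Remark \ref{rem.necessity} for a counterexample without it), $\Rep(\bG)$ is \emph{Morita equivalent} to a free product $\cC_1 * \cC_2$ of categories where $\cC_1$, $\cC_2$ are merely Morita equivalent to $\Rep(\bH)$, $\Rep(\bF)$. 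One then needs Morita invariance of $L^2$-Betti numbers, \cite[Proposition 7.4]{psv-cohom}, applied twice --- once to pass from $\Rep(\bG)$ to $\cC_1 * \cC_2$, and once to identify $\beta_n^{(2)}(\cC_i)$ with $\beta_n^{(2)}(\Rep(\bH))$ and $\beta_n^{(2)}(\Rep(\bF))$ --- before the free product formula \cite[Corollary 9.5]{psv-cohom} yields the three-case expression; Theorem \ref{thm.L2-Betti-quantum-rep-cat} then converts everything into statements about $\bGhat$, $\bHhat$ and $\bFhat$. Without the Tarrago--Wahl input and without Morita (rather than monoidal) invariance, your outline of (iv) cannot be completed.
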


\begin{proof}[Proof of Theorem \ref{thm.L2-Betti-Rep-Au}]
By \cite[Theorem~6.2]{bdrv-06}, the rigid $C^*$-tensor category $\Rep(A_u(F))$ only depends on the quantum dimension of the fundamental representation $U$. We may therefore assume that $F\overline{F}=\pm 1$. In \cite[Example~2.18, 3.6]{bny15} and \cite[Proposition~1.2]{bny16}, it is shown that there are exact sequences of Hopf $*$-algebras
\begin{align*}
&\CC\longrightarrow \Pol(\mathbb{H})\longrightarrow \Pol(A_o(F)*A_o(F))\longrightarrow \CC[\ZZ/2\ZZ]\longrightarrow \CC \; ,\\
&\CC\longrightarrow \Pol(\mathbb{H})\longrightarrow \Pol(A_u(F))\longrightarrow \CC[\ZZ/2\ZZ]\longrightarrow \CC \; ,
\end{align*}
for the same compact quantum group $\mathbb{H}$. At the categorical level, this means that $\Rep(A_u(F))$ and the free product $\Rep(A_o(F))*\Rep(A_o(F))$ both contain the same index two subcategory (see also \cite[\S~2]{bkr16}).

By the scaling formula in Theorem \ref{thm:scaling-formula}, this implies that $\Rep(A_u(F))$ and the free product $\Rep(A_o(F))*\Rep(A_o(F))$ have the same $L^2$-Betti numbers. From the free product formula for $L^2$-Betti numbers in \cite[Corollary~9.5]{psv-cohom} and the vanishing of the $L^2$-Betti numbers of $\Rep(A_o(F))$ proved in \cite[Theorem 9.9]{psv-cohom}, the theorem follows.
\end{proof}

\begin{proof}[Proof of Theorem \ref{thm.L2-Betti-discrete-some-computations}]
Using Theorem \ref{thm.L2-Betti-quantum-rep-cat}, \ref{stat.i} follows from Theorem \ref{thm.L2-Betti-Rep-Au} and \ref{stat.ii} follows from \cite[Theorem 9.9]{psv-cohom}. The representation categories of the quantum automorphism groups $A_{\text{\rm aut}}(B,\tau)$ are monoidally equivalent with the natural index $2$ full $C^*$-tensor subcategory of $\Rep(\SU_q(2))$. So \ref{stat.iii} follows from \cite[Theorem 9.9]{psv-cohom} and the scaling formula in Theorem \ref{thm:scaling-formula}.

To prove \ref{stat.iv}, let $\bG = \bH \wr_* \bF$ be a free wreath product as in the formulation of the theorem. We use the notion of Morita equivalence of rigid $C^*$-tensor categories, see \cite[Section 4]{mueger-I} and also \cite[Definition 7.3]{psv-cohom}. By \cite[Theorem B and Remark 7.6]{tarrago-wahl-wreath}, $\Rep(\bG)$ is Morita equivalent in this sense with a free product $C^*$-tensor category $\cC = \cC_1 * \cC_2$ where $\cC_1$ is Morita equivalent with $\Rep(\bH)$ and $\cC_2$ is Morita equivalent with $\Rep(\bF)$. To see this, one uses the observation in \cite[Proposition 9.8]{psv-cohom} that for the Jones tower $N \subset M \subset M_1 \subset \cdots$ of a finite index subfactor $N \subset M$ and for arbitrary intermediate subfactors
$$M_a \subset P \subset M_n \subset M_{n+1} \subset Q \subset M_b$$
with $a \leq n < b$, the $C^*$-tensor category of $P$-$P$-bimodules generated by $P \subset Q$ is Morita equivalent with the $C^*$-tensor category of $N$-$N$-bimodules generated by the original subfactor $N \subset M$. Then, combining \cite[Proposition 7.4 and Corollary 9.5]{psv-cohom}, we find that
$$\beta_n^{(2)}(\bGhat) = \beta_n^{(2)}(\cC_1 * \cC_2) =
\begin{cases} \beta_n^{(2)}(\cC_1) + \beta_n^{(2)}(\cC_2) &\;\;\text{if $n \geq 2$,}\\
    \beta_1^{(2)}(\cC_1) + \beta_1^{(2)}(\cC_2) + 1 - (\beta_0^{(2)}(\cC_1)+\beta_0^{(2)}(\cC_2)) &\;\;\text{if $n =1$,}\\
    0 &\;\;\text{if $n =0$.}\end{cases}$$
Since $\beta_n^{(2)}(\cC_1) = \beta_n^{(2)}(\Rep(\bH)) = \beta_n^{(2)}(\bHhat)$ for all $n \geq 0$, and similarly with $\beta_n^{(2)}(\cC_2)$, statement \ref{stat.iv} is proved.

Finally, by \cite[Theorem 3.4]{banica-verngioux-reflection}, the compact quantum groups $H^{s+}_m$ can be viewed as the free wreath product $(\Z/s\Z) \wr_* S_m^+$ and $H_m^+$ corresponds to the case $s=2$. So \ref{stat.v} follows from \ref{stat.iv}.
\end{proof}

\begin{remark}\label{rem.necessity}
The free wreath products $\bG = \bH \wr_* \bF$ were introduced in \cite{bichon-wreath}. We recall the definition here. Denote by $U \in M_m(\C) \ot C(\bF)$ the fundamental representation of the quantum group $\bF$ acting on $m$ points, so that the action of $\bF$ on $\C^m$ is given by the $*$-homomorphism
$$\al : \C^m \recht \C^m \ot C(\bF) : \al(e_j) = \sum_{i=1}^m e_i \ot U_{ij} \; .$$
Then $C(\bG)$ is defined as the universal $C^*$-algebra generated by $m$ copies of $C(\bH)$, denoted by $\pi_i(C(\bH))$, $i=1,\ldots,m$, together with $C(\bF)$, and the relations saying that $\pi_i(C(\bH))$ commutes with $U_{ij}$ for all $i,j \in \{1,\ldots,m\}$. The comultiplication $\Delta$ on $C(\bG)$ is defined by
$$\Delta(\pi_i(a)) = \sum_{j=1}^m ((\pi_i \ot \pi_j)\Delta(a)) \, (U_{ij} \ot 1) \quad\text{and}\quad \Delta(U_{ij}) = \sum_{k=1}^m U_{ik} \ot U_{kj} \; .$$
Now observe that it is essential to assume in Theorem \ref{thm.L2-Betti-discrete-some-computations}.\ref{stat.iv} that the action of $\bF$ on $\C^m$ is ergodic, in the same way as it is essential to make this hypothesis in \cite[Theorem B]{tarrago-wahl-wreath}. Indeed, in the extreme case where $\bF$ is the trivial one element group, we find that $C(\bH \wr_* \bF)$ is the $m$-fold free product of $C(\bH)$, so that
$$\beta_1^{(2)}(\bGhat) = \beta_1^{(2)}\bigl(\underbrace{\bHhat * \cdots * \bHhat}_{\text{$m$ times}}\bigr) = m(\beta_1^{(2)}(\bHhat) - \beta_0^{(2)}(\bHhat)) + m - 1 \; ,$$
which is different from the value given by Theorem \ref{thm.L2-Betti-discrete-some-computations}.\ref{stat.iv}, namely $\beta_1^{(2)}(\bHhat) - \beta_0^{(2)}(\bHhat)$.
\end{remark}

\begin{remark}
Let $(B,\tau)$ be a finite-dimensional $C^*$-algebra with its Markov trace and assume that $\bF$ is a quantum subgroup of $A_{\text{\rm aut}}(B,\tau)$ that is acting centrally ergodically on $(B,\tau)$. Given any Kac type compact quantum group $\bH$, \cite[Definition 7.5 and Remark 7.6]{tarrago-wahl-wreath} provides an implicit definition of the free wreath product $\bH \wr_* \bF$. The formula in Theorem \ref{thm.L2-Betti-discrete-some-computations}.\ref{stat.iv} remains valid and gives the $L^2$-Betti numbers of $\bH \wr_* \bF$.
\end{remark}

\begin{remark}
The fusion $*$-algebra $\C[\cC]$ of a rigid $C^*$-tensor category $\cC$ has a natural trace $\tau$ and counit $\counit : \C[\cC] \recht \C$ and these coincide with the restriction of the trace and the counit of the tube algebra $\cA$ to its corner $p_\eps \cdot \cA \cdot p_\eps = \C[\cC]$. The GNS construction provides a von Neumann algebra completion $L(\Irr \cC)$ of $\C[\cC]$ acting on $\ell^2(\Irr(\cC))$ and having a natural faithful normal tracial state $\tau$. So also the fusion $*$-algebra $\C[\cC]$ admits $L^2$-Betti numbers defined by
$$\beta_n^{(2)}(\Irr(\cC)) := \dim_{L(\Irr(\cC))} \Tor_n^{\C[\cC]}(\ell^2(\Irr(\cC)),\C) = \dim_{L(\Irr(\cC))} \Ext^n_{\C[\cC]}(\C,\ell^2(\Irr(\cC))) \; .$$
Answering a question posed by Dimitri Shlyakhtenko, we show below that the computation in Theorem \ref{thm.L2-Betti-discrete-some-computations}.\ref{stat.iv} provides the first examples of rigid $C^*$-tensor categories where $\beta_n^{(2)}(\Irr(\cC)) \neq \beta_n^{(2)}(\cC)$. Note that it was already observed in \cite[Comments after Proposition 9.13]{psv-cohom} that for the Temperley-Lieb-Jones category $\cC$, the $C^*$-tensor category and the fusion $*$-algebra have different homology with trivial coefficients.

The first $L^2$-Betti number $\beta_1^{(2)}(\Irr(\cC))$ can be computed as follows. Write $H = \ell^2(\Irr(\cC))$. A linear map $d : \C[\cC] \recht H$ is called a $1$-cocycle if $d(xy) = d(x) \, y + \counit(x) \, d(y)$ for all $x,y \in \C[\cC]$. A $1$-cocycle $d$ is called inner if there exists a vector $\xi \in H$ such that $d(x) = \xi \, x - \counit(x) \, \xi$ for all $x \in \C[\cC]$. Two $1$-cocycles $d_1$ and $d_2$ are called cohomologous if $d_1-d_2$ is inner. The space of $1$-cocycles $Z^1(\C[\cC],H)$ is a left $L(\Irr(\cC))$-module and when $\Irr(\cC)$ is infinite, the subspace of inner $1$-cocycles has $L(\Irr(\cC))$-dimension equal to $1$. In that case, one has
$$\beta_1^{(2)}(\Irr(\cC)) = -1 + \dim_{L(\Irr(\cC))} Z^1(\C[\cC],H) \; .$$

Let $\Gamma$ be any countable group and define $\bG = \Gammahat \wr_* \Z/2\Z$. The fusion rules on $\Irr(\bG)$ were determined in \cite{lemeux-fusion-free-wreath-product} and are given as follows. Denote by $v_1 \in \widehat{\Z / 2 \Z}$ the unique nontrivial element and define $W \subset \Gamma * \widehat{\Z / 2 \Z}$ as the set of reduced words
$$g_0 v_1 g_1 v_1 \cdots v_1 g_{n-1} v_1 g_n \quad , \quad n \geq 0 \; , \; g_0,\ldots,g_n \in \Gamma \setminus \{e\}$$
that start and end with a letter from $\Gamma \setminus \{e\}$. Then $\Irr(\bG)$ can be identified with the set consisting of the trivial representation $v_0$, the one-dimensional representation $v_1$ and a set of $2$-dimensional representations $v(\eps,g,\delta)$ for $\eps,\delta \in \{\pm\}$ and $g \in \Gamma \setminus \{e\}$. The fusion rules are given by
\begin{align*}
& v_1 \ot v(\eps,g,\delta) = v(-\eps,g,\delta) \quad ,\\
& v(\eps,g,\delta) \ot v_1 = v(\eps,g,-\delta) \quad\text{and}\\
& v(\eps,g,\delta) \ot v(\eps',h,\delta') = \begin{cases} v(\eps,gv_1 h, \delta') \oplus v(\eps , gh, \delta') &\;\;\text{if $gh \neq e$,}\\
v(\eps,gv_1 h, \delta') \oplus v_1 \oplus v_0 &\;\;\text{if $gh = e$.}\end{cases}
\end{align*}
Write $H = \ell^2(\Irr(\bG))$. Given an arbitrary family of vectors $(\xi_g)_{g \in \Gamma \setminus \{e\}}$ in $H$, one checks that there is a uniquely defined $1$-cocycle $d : \C[\cC] \recht H$ satisfying $d(v_0) = d(v_1) = 0$ and $d(v(\eps,g,\delta)) = \xi_g$ for all $g \in \Gamma \setminus \{e\}$ and $\eps,\delta \in \{\pm\}$. Moreover, this provides exactly the $1$-cocycles that vanish on $v_0$ and $v_1$.
Every $1$-cocycle is cohomologous to a $1$-cocycle vanishing on $v_0$, $v_1$, and the inner $1$-cocycles vanishing on $v_0$, $v_1$ have $L(\Irr(\bG))$-dimension $1/2$. It follows that
$$\beta_1^{(2)}(\Irr(\bG)) = |\Gamma| - 1 - \frac{1}{2} = |\Gamma|-\frac{3}{2} \; .$$
On the other hand, by Theorem \ref{thm.L2-Betti-discrete-some-computations}.\ref{stat.iv}, we have
$$\beta_1^{(2)}(\Rep(\bG)) = \beta_1^{(2)}(\Gamma) - \beta_0^{(2)}(\Gamma) + \frac{1}{2} \; .$$
Taking $\Gamma = \Z$, we find an example where $\beta_1^{(2)}(\Irr(\bG)) = \infty$, while $\beta_1^{(2)}(\Rep(\bG))=1/2$. Taking $\Gamma = \Z / 2\Z$, we find an example where $\Rep(\bG)$ is an amenable $C^*$-tensor category, but yet $\beta_1^{(2)}(\Irr(\bG)) = 1/2 \neq 0$. Although amenability can be expressed as a property of the fusion rules together with the counit (which provides the dimensions of the irreducible objects), amenability does not ensure that the fusion $*$-algebra has vanishing $L^2$-Betti numbers. In particular, the Cheeger-Gromov argument given in \cite[Theorem 8.8]{psv-cohom} does not work on the level of the fusion $*$-algebra. In the above example, $\Rep(\bG)$ is Morita equivalent to the group $\Gamma * \Z/2\Z$. So also invariance of $L^2$-Betti numbers under Morita equivalence does not work on the level of the fusion $*$-algebra. All in all, this illustrates that it is not very natural to consider $L^2$-Betti numbers for fusion algebras.
\end{remark}

\section{Projective resolution for the Temperley-Lieb-Jones category}

Fix $q \in [-1,1] \setminus \{0\}$ and realize the Temperley-Lieb-Jones category $\cC$ as the representation category $\cC = \Rep(\SU_q(2))$. Denote by $\cA$ the tube algebra of $\cC$ together with its counit $\counit : \cA \recht \C$.

Although it was proved in \cite[Theorem 9.9]{psv-cohom} that $\beta_n^{(2)}(\cC) = 0$ for all $n \geq 0$, an easy projective resolution of $\counit : \cA \recht \C$ was not given in \cite{psv-cohom}. On the other hand, \cite[Theorem 5.1]{bichon-yetter-drinfeld} provides a length 3 projective resolution for the counit of $\Pol(\SU_q(2))$. In the case of $\Pol(A_o(m))$, this projective resolution was already found in \cite[Theorem 1.1]{collins-haertel-thom}, but the proof of its exactness was very involved and ultimately relied on a long, computer-assisted Gr\"{o}bner base calculation. The proof in \cite{bichon-yetter-drinfeld} is much simpler and moreover gives a resolution by so-called Yetter-Drinfeld modules. This means that it is actually a length 3 projective resolution for the quantum double algebra of $\SU_q(2)$. By \cite[Theorem 2.4]{neshveyev-yamashita-remarks}, this quantum double algebra is strongly Morita equivalent with the tube algebra $\cA$. The following is thus an immediate consequence of \cite[Theorem 5.1]{bichon-yetter-drinfeld}.

\begin{theorem}\label{thm.resolution-TLJ}
Label by $(v_n)_{n \in \N}$ the irreducible objects of $\cC = \Rep(\SU_q(2))$ and denote by $(p_n)_{n \in \N}$ the corresponding projections in $\cA$.

Decomposing $v_1 v_1 = v_0 \oplus v_2$, the identity operator $1 \in ((v_1 v_1)v_1,v_1 (v_1 v_1))$ defines a unitary element $V \in (p_0 + p_2)\cdot \cA \cdot (p_0 + p_2)$. Denoting by $\tau \in \{\pm 1\}$ the sign of $q$, the sequence
$$0 \recht \cA \cdot p_0 \overset{W \mapsto W \cdot p_0 \cdot (V+\tau)}{\longrightarrow} \cA \cdot (p_0 + p_2) \overset{W \mapsto W \cdot (V-\tau)}{\longrightarrow} \cA \cdot (p_0 + p_2) \overset{W \mapsto W \cdot (V+\tau) \cdot p_0}{\longrightarrow} \cA \cdot p_0 \overset{\counit}{\recht} \C$$
is a resolution of $\counit : \cA \recht \C$ by projective left $\cA$-modules.
\end{theorem}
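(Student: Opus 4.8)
The plan is to obtain the resolution not by computing kernels and images inside $\cA$ directly (which would reproduce the computational difficulty met in \cite{collins-haertel-thom}), but by transporting the free Yetter--Drinfeld resolution of \cite[Theorem 5.1]{bichon-yetter-drinfeld} across the strong Morita equivalence of \cite[Theorem 2.4]{neshveyev-yamashita-remarks}. Recall that Yetter--Drinfeld modules over $\Pol(\SU_q(2))$ are exactly the modules over Drinfeld's quantum double algebra $D$ of $\SU_q(2)$, and that \cite[Theorem 5.1]{bichon-yetter-drinfeld} provides an exact sequence of left $D$-modules
$$0 \recht D \recht D^{2} \recht D^{2} \recht D \recht \C \recht 0 \; ,$$
in which every nonzero term is free, the last arrow is the counit of $D$, and the differentials are right multiplication by explicit elements built from the fundamental corepresentation of $\SU_q(2)$. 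By \cite[Theorem 2.4]{neshveyev-yamashita-remarks}, $D$ is strongly Morita equivalent to $\cA$, and this equivalence is compatible with the counits on both sides.

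First I would pass to the additive equivalence $F$ between left $D$-modules and left $\cA$-modules induced by the Morita bimodule. As an equivalence, $F$ preserves projectivity and carries exact sequences to exact sequences, and since the equivalence respects the augmentations it sends the trivial $D$-module to the trivial $\cA$-module $(\C,\counit)$. Applying $F$ termwise to Bichon's sequence therefore already yields a length $3$ resolution of $\counit : \cA \recht \C$ by projective left $\cA$-modules. It only remains to recognise the resulting modules and differentials as the explicit ones in the statement.

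This identification is the substance of the argument, and I would carry it out by translating each term and each map through the explicit Morita bimodule of \cite{neshveyev-yamashita-remarks}. The images of Bichon's free modules are the principal projective $\cA$-modules $\cA \cdot p_0$ (from the rank one module, corresponding to the unit object $v_0 = \eps$) and $\cA \cdot (p_0 + p_2)$ (from the rank two module); here $p_0 + p_2$ is the projection onto $v_1 v_1 = v_0 \oplus v_2$, and its appearance reflects the decomposition governing the module built from the fundamental corepresentation. Tracing the generators of the complex through $F$, the fundamental corepresentation is carried to the element $V \in (p_0+p_2)\cdot\cA\cdot(p_0+p_2)$ induced by the identity morphism on $v_1 v_1 v_1$, and the differentials become right multiplication by $V + \tau$ and $V - \tau$; the factors $p_0$ at the two ends record that the outer terms are $\cA \cdot p_0$.

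The step I expect to be the main obstacle is precisely this bookkeeping. One must unwind the explicit form of the Morita equivalence of \cite{neshveyev-yamashita-remarks} far enough to match its half-braiding data with the corepresentation data used by Bichon, to confirm that the scalar entering the differentials is exactly the sign $\tau$ of $q$, and to check that the successive differentials are $V + \tau$, $V - \tau$ and $V + \tau$ as forced by the structure constants of $\SU_q(2)$. Once $V$ and $\tau$ have been matched correctly, nothing further is needed: each arrow is right multiplication by a fixed element and hence automatically left $\cA$-linear, while the vanishing of consecutive composites, the exactness at every spot, and the projectivity of each term are all inherited through $F$ from Bichon's resolution.
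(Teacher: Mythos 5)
Your proposal follows exactly the paper's own route: the paper obtains Theorem \ref{thm.resolution-TLJ} precisely as an immediate consequence of Bichon's free Yetter--Drinfeld resolution \cite[Theorem 5.1]{bichon-yetter-drinfeld}, transported through the strong Morita equivalence of \cite[Theorem 2.4]{neshveyev-yamashita-remarks} between the quantum double algebra of $\SU_q(2)$ and the tube algebra $\cA$ (an equivalence compatible with the counits), with the same identification of the transported terms as $\cA \cdot p_0$ and $\cA \cdot (p_0 + p_2)$ and of the differentials as right multiplication by $V \pm \tau$. One minor correction that does not affect the argument: Bichon's middle terms are free Yetter--Drinfeld modules on the four-dimensional comodule $v_1 \otimes v_1$, so they are projective but not free of rank two over the double; what matters, as you in effect use, is that $v_1 \otimes v_1 \cong v_0 \oplus v_2$ forces these terms to correspond to $\cA \cdot (p_0 + p_2)$ under the equivalence.
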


As a consequence of Theorem \ref{thm.resolution-TLJ}, we immediately find the (co)homology of $\cC = \Rep(\SU_q(2))$ with trivial coefficients $\C$, which was only computed up to degree $2$ in \cite[Proposition 9.13]{psv-cohom}. The same result was found in an unpublished note of Y.~Arano using different methods.

\begin{corollary}\label{cor.homology-trivial-coefficients}
For $\cC = \Rep(\SU_q(2))$, the homology $H_n(\cC,\C)$ and cohomology $H^n(\cC,\C)$ with trivial coefficients are given by $\C$ when $n=0,3$ and are $0$ when $n \not\in \{0,3\}$.
\end{corollary}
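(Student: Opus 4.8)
The plan is to feed the explicit length-$3$ projective resolution of Theorem \ref{thm.resolution-TLJ} into the functors that compute (co)homology with trivial coefficients. Recall that $H_n(\cC,\C) = \Tor_n^{\cA}(\C,\C)$ and $H^n(\cC,\C) = \Ext^n_{\cA}(\C,\C)$, where $\C$ is the $\cA$-module defined by $\counit$. Since the resolution consists of the left $\cA$-modules $\cA\cdot p_0$ and $\cA\cdot(p_0+p_2)$, I would compute the homology by applying $\C\otimes_{\cA}-$ and the cohomology by applying $\Hom_{\cA}(-,\C)$. The first observation is that for an idempotent $p$ among the local units one has $\counit(p)\in\{0,1\}$, and a short computation (using $1\otimes p = 1\otimes p\cdot p = \counit(p)(1\otimes p)$) shows that $\C\otimes_{\cA}(\cA\cdot p)\cong\C$ when $\counit(p)=1$ and $\C\otimes_{\cA}(\cA\cdot p)=0$ when $\counit(p)=0$; the same dichotomy holds for $\Hom_{\cA}(\cA\cdot p,\C)$. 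Since $p_0 = p_\eps$ gives $\counit(p_0)=1$ while $\counit(p_2)=0$, every term of both the resulting chain and cochain complex collapses to a single copy of $\C$, so both reduce to $0\recht\C\recht\C\recht\C\recht\C\recht0$ and the entire problem becomes the determination of the three differentials.

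Next I would identify these differentials as scalars. Under the isomorphism $\C\otimes_{\cA}(\cA\cdot p)\cong\C$ given by $1\otimes y\mapsto\counit(y)$, right multiplication by a multiplier $m$ in the resolution induces multiplication by $\counit(p\cdot m)$. Setting $c := \counit(p_0\cdot V\cdot p_0)$ and using that $\counit$ annihilates every tube component $p_iWp_j$ with $(i,j)\neq(\eps,\eps)$ (in particular $\counit(V)=c$), the three differentials come out as $d_1 = d_3 = c+\tau$ and $d_2 = c-\tau$; the asymmetry is exactly the extra factor $p_0$ carried by $\partial_1,\partial_3$ but not by $\partial_2$. A formal consequence of $d_1d_2 = d_2d_3 = 0$ in a complex of one-dimensional spaces is that as soon as the middle map $d_2$ is nonzero it is an isomorphism, forcing $d_1 = d_3 = 0$; the homology is then $\C$ in degrees $0$ and $3$ and $0$ in degrees $1,2$, and since the resolution has length $3$ there is nothing in higher degrees. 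Thus everything hinges on the single scalar $c$, and it suffices to prove $c\neq\tau$.

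The remaining, and genuinely delicate, step is the evaluation of $c = \counit(p_0\cdot V\cdot p_0)$. Here $p_0\cdot V\cdot p_0$ is the $(\eps,\eps)$-matrix coefficient of the associator $1\in((v_1v_1)v_1,v_1(v_1v_1))$ relative to $v_1v_1 = v_0\oplus v_2$, which I would write as $\tfrac{1}{d(v_1)}(1\otimes s_{v_1}^*)(s_{v_1}\otimes 1)\in(v_1,v_1)$, so that $c=\Tr_{v_1}$ of this element. Taking the adjoint of the conjugate equation $(t_{v_1}^*\otimes 1)(1\otimes s_{v_1})=1$ reduces the computation to the Frobenius--Schur sign of the fundamental object, i.e.\ to $t_{v_1}=\epsilon\,s_{v_1}$ with $\epsilon\in\{\pm1\}$, and gives $c=\epsilon$. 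For $\SU_q(2)=A_o\bigl(\begin{smallmatrix}0&-q\\1&0\end{smallmatrix}\bigr)$ one has $F\overline{F}=-q\,1$, so $\epsilon = -\operatorname{sign}(q) = -\tau$, whence $c=-\tau$ and $d_2 = -2\tau\neq0$. The hard part is precisely pinning down this sign, and the cleanest way to sidestep the Frobenius--Schur bookkeeping is to invoke the elementary fact $H_0(\cC,\C)=\C$ (equivalently $\C\otimes_{\cA}\C=\C$, recorded already in \cite[Proposition 9.13]{psv-cohom}): this forces $d_1=0$, hence $c=-\tau$ and $d_2=-2\tau\neq0$ directly. Either way the cohomology is computed identically, since $\Hom_{\cA}(-,\C)$ turns the resolution into the transpose complex with the same scalar differentials, and the corollary follows.
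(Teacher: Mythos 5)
Your proof is correct and takes essentially the same route as the paper: there the corollary is presented as an immediate consequence of Theorem \ref{thm.resolution-TLJ}, obtained exactly as you do by applying $\C \otimes_{\cA} -$ and $\Hom_{\cA}(-,\C)$ to the resolution, and your key scalar $\counit(V) = -\tau$ is precisely the fact the paper records in the remark following the corollary (where it is stated as ``straightforward to check''). Your shortcut of deducing $c = -\tau$ from $H_0(\cC,\C) = \C$ rather than from the Frobenius--Schur sign is a pleasant refinement, but it stays within the same argument.
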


\begin{remark}
It is straightforward to check that inside $\cA$, we have $p_0 \cdot V \cdot V = p_0$ and $\counit(V) = - \tau$. Therefore, the composition of two consecutive arrows in Theorem \ref{thm.resolution-TLJ} indeed gives the zero map. Using the diagrammatic representation of the tube algebra $\cA$ given in \cite[Section 5.2]{ghosh-jones}, there are natural vector space bases for $\cA \cdot p_0$ and $\cA \cdot (p_0 + p_2)$. It is then quite straightforward to check that the sequence in Theorem \ref{thm.resolution-TLJ} is indeed exact.

Using the same bases, one also checks that the tensor product of this resolution with $L^2(\cA) \ot_{\cA} \cdot$ stays dimension exact. This then provides a slightly more elementary proof that $\beta_n^{(2)}(\cC) = 0$ for all $n \geq 0$, as was already proved in \cite[Theorem 9.9]{psv-cohom}.
\end{remark}

\begin{remark}
Section 9.5 of \cite{psv-cohom} provides a diagrammatic complex to compute the homology $H_n(\cC,\C)$ with trivial coefficients. In the particular case where $\cC$ is the Temperley-Lieb-Jones category $\TLJ(\delta)=\Rep(\SU_q(2))$ with $-1<q<0$ and $\delta = -q - 1/q$, the space of $n$-chains is given by the linear span of all configurations of nonintersecting circles embedded into the plane with $n$ points removed. Using Theorem \ref{thm.resolution-TLJ}, one computes that the $3$-homology is spanned by the $3$-cycle
$$c_1 = \quad \begin{tikzpicture}[baseline={([yshift=6pt]current bounding box.south)},scale=.3]
        %\node at (1,1){$\bullet$};
        \draw[fill] (1,1) circle (3pt);
        %\node at (3,1){$\bullet$};
        \draw[fill] (3,1) circle (3pt);
        %\node at (5,1){$\bullet$};
        \draw[fill] (5,1) circle (3pt);
        \draw[line width=1pt] (0.25,1) .. controls (0.25,0) and (1.75,0) .. (1.75,1) .. controls (1.75,2) and (4.25,2) .. (4.25,1) .. controls (4.25,0) and (5.75,0) .. (5.75,1) .. controls (5.75,3) and (0.25,3) .. (0.25,1);
    \end{tikzpicture} \quad + \quad
    \begin{tikzpicture}[baseline={([yshift=5pt]current bounding box.south)},scale=.3]
    	\draw[fill] (1,1) circle (3pt);
        \draw[fill] (3,1) circle (3pt);
        \draw[fill] (5,1) circle (3pt);
        \draw[line width=1pt] (4,1) ellipse (50pt and 25pt);
    \end{tikzpicture}
$$
It is however less clear how to write effectively a generating $3$-cocycle in this diagrammatic language. For instance, for every integer $k \geq 1$, indicating by $k$ the number of parallel strings, also
$$c_k = \quad \begin{tikzpicture}[baseline={([yshift=6pt]current bounding box.south)},scale=.3]
        %\node at (1,1){$\bullet$};
        \draw[fill] (1,1) circle (3pt);
        %\node at (3,1){$\bullet$};
        \draw[fill] (3,1) circle (3pt);
        %\node at (5,1){$\bullet$};
        \draw[fill] (5,1) circle (3pt);
        \draw[line width=1.5pt] (0.25,1) .. controls (0.25,0) and (1.75,0) .. (1.75,1) .. controls (1.75,2) and (4.25,2) .. (4.25,1) .. controls (4.25,0) and (5.75,0) .. (5.75,1) .. controls (5.75,3) and (0.25,3) .. (0.25,1);
        \node at (5.5,2.8) {$k$};
    \end{tikzpicture} \quad + \quad
    \begin{tikzpicture}[baseline={([yshift=5pt]current bounding box.south)},scale=.3]
    	\draw[fill] (1,1) circle (3pt);
        \draw[fill] (3,1) circle (3pt);
        \draw[fill] (5,1) circle (3pt);
        \draw[line width=1.5pt] (4,1) ellipse (50pt and 25pt);
        \node at (5.5,2.4) {$k$};
    \end{tikzpicture}
$$
and
$$d_k = \quad \begin{tikzpicture}[baseline={([yshift=8mm]current bounding box.south)},scale=.3]
\draw[fill] (1,1) circle (3pt);
\draw[fill] (5,1) circle (3pt);
\draw[fill] (9,1) circle (3pt);
\draw[line width=1.5pt] (0.25,1) .. controls (0.25,0) and (1.75,0) .. (1.75,1) .. controls (1.75,2) and (5.75,2) .. (5.75,1) .. controls (5.75,-1) and (-0.75,-1) .. (-0.75,1) .. controls (-0.75,4) and (8.25,4) .. (8.25,1) .. controls (8.25,0) and (9.75,0) .. (9.75,1) .. controls (9.75,5) and (-2.25,5) .. (-2.25,1) .. controls (-2.25,-2) and (7.25,-2) .. (7.25,1)  .. controls (7.25,3) and (0.25,3) .. (0.25,1);
\node at (9,3.6) {$k$};
\end{tikzpicture} \quad + \quad
    \begin{tikzpicture}[baseline={([yshift=5pt]current bounding box.south)},scale=.3]
    	\draw[fill] (1,1) circle (3pt);
        \draw[fill] (3,1) circle (3pt);
        \draw[fill] (5,1) circle (3pt);
        \draw[line width=1.5pt] (4,1) ellipse (50pt and 25pt);
        \node at (5.5,2.4) {$k$};
    \end{tikzpicture}
$$
are $3$-cycles and ad hoc computations show that in $3$-homology, we have $c_k = k \delta^{k-1} \, c_1$ and $d_k = 3k\delta^{k-1} \, c_1$. It would be interesting to have a geometric procedure to identify a given $3$-cycle with a multiple of $c_1$ and to prove geometrically that homology vanishes in higher degrees.

%For instance, also
%$$c_2 = \quad \begin{tikzpicture}[baseline={([yshift=10pt]current bounding box.south)},scale=.3]
%        %\node at (1,1){$\bullet$};
%        \draw[fill] (1,1) circle (3pt);
%        %\node at (3,1){$\bullet$};
%        \draw[fill] (3,1) circle (3pt);
%        %\node at (5,1){$\bullet$};
%        \draw[fill] (5,1) circle (3pt);
%        \draw[line width=1pt] (0.25,1) .. controls (0.25,0) and (1.75,0) .. (1.75,1) .. controls (1.75,2) and (4.25,2) .. (4.25,1) .. controls (4.25,0) and (5.75,0) .. (5.75,1) .. controls (5.75,3) and (0.25,3) .. (0.25,1);
%        \draw[line width=1pt] (-0.25,1) .. controls (-0.25,-0.5) and (2.25,-0.5) .. (2.25,1) .. controls (2.25,1.5) and (3.75,1.5) .. (3.75,1) .. controls (3.75,-0.5) and (6.25,-0.5) .. (6.25,1) .. controls (6.25,3.5) and (-0.25,3.5) .. (-0.25,1);
%    \end{tikzpicture} \quad + \quad
%    \begin{tikzpicture}[baseline={([yshift=9pt]current bounding box.south)},scale=.3]
%    	\draw[fill] (1,1) circle (3pt);
%        \draw[fill] (3,1) circle (3pt);
%        \draw[fill] (5,1) circle (3pt);
%        \draw[line width=1pt] (4,1) ellipse (50pt and 25pt);
%        \draw[line width=1pt] (4,1) ellipse (60pt and 35pt);
%    \end{tikzpicture}$$
%is a $3$-cycle and ad hoc computations show that in $3$-homology, we have $c_2 = 2 \delta \, c_1$. It would be interesting to have a geometric procedure to identify a given $3$-cycle with a multiple of $c_1$.

\end{remark}

\section{\boldmath Derivations on rigid $C^*$-tensor categories} \label{sec.derivations-Cstar-tensor}

\subsection{A Drinfeld type central element in the tube algebra}

To describe the first cohomology of a rigid $C^*$-tensor category $\cC$ by a space of derivations, a natural element in the center of the tube algebra (more precisely, in the center of its multiplier algebra) plays a crucial role. In the case where $\cC$ has only finitely many irreducible objects and hence, the tube algebra $\cA$ is a direct sum of matrix algebras, this Drinfeld type central element was introduced in \cite[Theorem 3.3]{izumi-sectors-I}. When $\cC$ has infinitely many irreducible objects, the same definition applies and yields the following central unitary $U$ in the multiplier algebra $M(\cA)$ defined by unitary elements $U_i \in p_i \cdot \cA \cdot p_i$.

Fix a rigid $C^*$-tensor category $\cC$. For every $i \in \Irr(\cC)$, denote by $U_i \in p_i \cdot \cA \cdot p_i$ the element defined by the identity map in $(ii,ii)$.

\begin{proposition}
\label{thm:ui-properties}
Fix $i,j\in\Irr(\mathcal{C})$.
Then $U_i^\#=s_it_i^*$ and $U_i\cdot U_i^\#=U_i^\#\cdot U_i=p_i$.
In other words, $U_i$ is unitary in $p_i\cdot\mathcal{A}\cdot p_i$.
Moreover, for any $\alpha\in\Irr(\mathcal{C})$ and $V\in (i\alpha,\alpha j)$, the following relation holds:
\begin{equation}
\label{eqn:ui-commutation}
U_i\cdot V =V\cdot U_j =\sum_{\gamma\in\Irr(\mathcal{C})}d(\gamma)\sum_{\substack{W\in\onb(i\alpha,\gamma) W'\in\onb(\alpha j,\gamma)}} \langle V, WW'^*\rangle \; (1 \otimes W'^*)(W\otimes 1) \; .
\end{equation}
So, $U := \sum_{i \in I} U_i$ is a central unitary element in the multiplier algebra $M(\cA)$.
\end{proposition}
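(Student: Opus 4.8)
The plan is to establish the three assertions in turn---the identity $U_i^\# = s_i t_i^*$, the commutation relation \eqref{eqn:ui-commutation} (which is the heart of the matter), and the resulting unitarity---and then read off centrality of $U$ directly from the commutation relation.

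First I would compute $U_i^\#$ by feeding $V = U_i$, i.e.\ the identity $1 \in (ii,ii)$ with tube object $\alpha = i$, into the defining formula for ${}^\#$. Since $V^* = 1$, the middle factor $(1 \otimes V^* \otimes 1)$ disappears and one is left with $(t_i^* \otimes 1 \otimes 1)(1 \otimes 1 \otimes s_i)$. By bifunctoriality this equals $t_i^* \otimes s_i$, which under the unit constraints is exactly the composite $s_i t_i^* : \overline{i} i \to \eps \to i\overline{i}$; in particular $U_i^\# \in (i\overline{i},\overline{i}i)$, as it must be.

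The crux is the commutation relation, and I would prove it by computing $U_i \cdot V$ and $V \cdot U_j$ separately and showing that both equal the explicit double sum in \eqref{eqn:ui-commutation}. Writing out the tube-algebra product, $U_i \cdot V$ is the morphism $1_i \otimes V$ with tube object $i\alpha$ and $V \cdot U_j$ is $V \otimes 1_j$ with tube object $\alpha j$; pushing each through the embedding \eqref{eqn:general-intertw-in-tube-algebra} gives
\[
U_i \cdot V = \sum_\gamma d(\gamma) \sum_{W \in \onb(i\alpha,\gamma)} (1 \ot W^*)(1 \ot V)(W \ot 1), \qquad V \cdot U_j = \sum_\gamma d(\gamma) \sum_{W' \in \onb(\alpha j,\gamma)} (1 \ot W'^*)(V \ot 1)(W' \ot 1) \; .
\]
Bifunctoriality collapses the inner composites to $(1 \ot W^*)(1 \ot V) = 1 \ot (W^* V)$ and $(V \ot 1)(W' \ot 1) = (VW') \ot 1$. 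The one genuinely useful computation is then the pair of inner-product identities $\langle VW', W\rangle = \langle V, WW'^*\rangle = \langle W^*V, W'^*\rangle$: the first is the tautology $\Tr_{i\alpha}(VW'W^*)$, and the second is the traciality $\Tr_{i\alpha}(VW'W^*) = \Tr_\gamma(W^*VW')$ built into \eqref{eqn:morphism-space-innprod}. Expanding $VW' = \sum_W \langle V, WW'^*\rangle W$ in the basis $\onb(i\alpha,\gamma)$ and $W^*V = \sum_{W'} \langle V, WW'^*\rangle W'^*$ in the basis $\{W'^*\}$ of $(\gamma,\alpha j)$ turns both displayed expressions into the common double sum $\sum_\gamma d(\gamma) \sum_{W,W'} \langle V, WW'^*\rangle (1 \ot W'^*)(W \ot 1)$, which simultaneously yields $U_i \cdot V = V \cdot U_j$ and the stated formula.

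For unitarity I would use Step~1 together with the relation just proved. Expanding $U_i^\# \cdot U_i = (s_i t_i^*) \ot 1$ (tube object $\overline{i}i$) through \eqref{eqn:general-intertw-in-tube-algebra}, bifunctoriality gives $(s_i t_i^* \ot 1)(W \ot 1) = (s_i t_i^* W) \ot 1$, and since $t_i^* W$ is a morphism $\gamma \to \eps$ it vanishes for every irreducible $\gamma \neq \eps$. The single surviving term, with $W = d(i)^{-1/2} t_i$, collapses via $t_i^* t_i = d(i)$ and the conjugate equation $(1 \ot t_i^*)(s_i \ot 1) = 1$ to $p_i$, so $U_i^\# \cdot U_i = p_i$. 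Applying the commutation relation to $V = U_i^\# \in (i\overline{i},\overline{i}i)$ then gives $U_i \cdot U_i^\# = U_i^\# \cdot U_i = p_i$, so $U_i$ is unitary in $p_i \cdot \cA \cdot p_i$. Finally, for $V \in (i\alpha,\alpha j)$ one has $U \cdot V = U_i \cdot V$ and $V \cdot U = V \cdot U_j$, since all other $U_k$ meet $V$ in mismatched indices; the commutation relation forces $U \cdot V = V \cdot U$, and as such $V$ span $\cA$ this makes $U$ central in $M(\cA)$. Its unitarity follows from $U^\# \cdot U = \sum_i U_i^\# \cdot U_i = \sum_i p_i = 1$ and, symmetrically, $U \cdot U^\# = 1$. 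I expect the only real obstacle to be organizational---keeping the tensor-leg bookkeeping and the two embeddings aligned so that the bifunctoriality simplifications and the trace identities match up---rather than any analytic or structural difficulty.
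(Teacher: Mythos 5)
Your proposal is correct and follows essentially the same route as the paper: compute $U_i^\#$ directly from the definition of ${}^\#$, expand $U_i\cdot V$ and $V\cdot U_j$ through the embedding \eqref{eqn:general-intertw-in-tube-algebra}, and identify both with the double sum via orthonormal-basis expansions of $W^*V$ and $VW'$ using the trace inner products. The only (harmless) deviation is organizational: the paper verifies $U_i\cdot U_i^\# = p_i$ by a second direct computation, whereas you obtain it from the already-proved commutation relation applied to $V = U_i^\#$, which is a perfectly valid shortcut.
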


\begin{proof}
By definition of the involution on $\mathcal{A}$, we have that
\[
U_i^\#=(t_i^*\otimes 1 \otimes 1)(1 \otimes 1 \otimes s_i)=s_it_i^*\in (i\overline{i},\overline{i}i) \; .
\]
Given this, one finds that
\begin{align*}
U_i^\#\cdot U_i&=\sum_{\gamma\in\Irr(\mathcal{C})}d(\gamma)\sum_{W\in\onb(\overline{i}i,\gamma)} (1 \otimes W^*)(U_i^\#\otimes 1)(1 \otimes U_i)(W\otimes 1)\\
&=\sum_{\gamma\in\Irr(\mathcal{C})}d(\gamma)\sum_{W\in\onb(\overline{i}i,\gamma)} (1 \otimes W^*)(s_it_i^*W\otimes 1) \; .
\end{align*}
Note that all terms with $\gamma\neq\eps$ vanish.
Hence, to conclude the computation, it suffices to note that $\{d(i)^{-1/2}t_i\}$ is an orthonormal basis for $(\overline{i}i,\eps)$.
Similarly, one checks that $U_i\cdot U_i^\#=p_i$.

Choose $V\in p_i\cdot\mathcal{A}\cdot p_j$ arbitrarily.
Then
\begin{align*}
U_i\cdot V &= \sum_{\gamma\in\Irr(\mathcal{C})}\sum_{W\in\onb(i\alpha,\gamma)} d(\gamma) (1 \otimes W^* V)(W\otimes 1) \; .
\end{align*}
On the other hand,
\begin{align*}
V\cdot U_j &= \sum_{\gamma\in\Irr(\mathcal{C})}\sum_{W'\in\onb(j\alpha,\gamma)} d(\gamma) (1 \otimes W'^*)(VW'\otimes 1) \; .
\end{align*}
From these identities, one readily deduces \eqref{eqn:ui-commutation}, by expanding $W^*V$ (resp.\@ $VW'$) in terms of the other orthonormal basis and using that the scalar products are given by the categorical traces.
\end{proof}

Note that \eqref{eqn:ui-commutation}, along with the fact that $U_\eps=p_\eps$ in particular implies that
\begin{equation}
\label{eqn:ui-eps-absorb}
U_i\cdot V = V \text{\qquad and\qquad} W\cdot U_i = W
\end{equation}
for $V\in p_i\cdot\mathcal{A}\cdot p_\eps$ and $W\in p_\eps\cdot \mathcal{A}\cdot p_i$.
As another corollary of \eqref{eqn:ui-commutation}, we find that $U=\sum_{i\in\Irr(\mathcal{C})} U_i$ belongs to the center of the von Neumann algebra $\mathcal{A}''$.

\subsection{\boldmath Properties of $1$-cocycles}

Let $\cC$ be a rigid $C^*$-tensor category with tube algebra $\mathcal{A}$. Fix a nondegenerate right Hilbert $\cA$-module $\cK$.
As in \cite{psv-cohom}, define the bar complex for Hochschild (co)homology as follows.
Denote by $\mathcal{B}$ the linear span of the projections $p_i$, $i \in \Irr(\cC)$. Then define
\[
C_n=p_\eps\cdot\mathcal{A}\otimes_{\mathcal{B}}\underbrace{\mathcal{A}\otimes_{\mathcal{B}}\cdots\otimes_{\mathcal{B}}\mathcal{A}}_{n\text{ factors }}
\]
with boundary maps $\partial:C_n\to C_{n-1}: \sum_{k=0}^n (-1)^k\partial_k$ where
\[
\partial_k(V_0\otimes\cdots\otimes V_n)=\begin{cases}
\counit(V_0)p_\eps\cdot V_1\otimes\cdots\otimes V_n & k=0,\\
V_0\otimes\cdots\otimes V_{k-1}\cdot V_k\otimes\cdots\otimes V_n & 1\leq k\leq n \; .
\end{cases}
\]
This is a resolution of the trivial right $\cA$-module $\C$ by projective right $\mathcal{A}$-modules. So $H^n(\mathcal{C},\mathcal{K}^0)$ is the $n$-th cohomology of the dual complex
\begin{equation}
\label{eqn:original-l2-bar-complex}
 \Hom_{\mathcal{A}}(C_n,\mathcal{K}^0)=\Hom_{\mathcal{A}}(p_\eps\cdot\mathcal{A}\otimes_{\mathcal{B}}\underbrace{\mathcal{A}\otimes_{\mathcal{B}}\cdots\otimes_{\mathcal{B}}\mathcal{A}}_{n\text{ factors }}\,,\mathcal{K}^0).
\end{equation}

The complex in \eqref{eqn:original-l2-bar-complex} is isomorphic with the complex
\[
\tilde{C}^{n}=\Hom_{\mathcal{B}}(p_\eps\cdot\mathcal{A}\otimes_{\mathcal{B}}\underbrace{\mathcal{A}\otimes_{\mathcal{B}}\cdots\otimes_{\mathcal{B}}\mathcal{A}}_{n-1\text{ factors }}\,,\mathcal{K}^0)
\]
where $\tilde{C}^0=\mathcal{K}\cdot p_\eps$. For $n\geq 1$, the coboundary maps of this complex are given by $\partial: \tilde{C}^{n}\to\tilde{C}^{n+1}: \sum_{k=0}^{n+1}(-1)^k\partial_k$ where
\[
\partial_k(D)(V_0\otimes \cdots \otimes V_n)
=\begin{cases}
\counit(V_0) D\left(p_\eps\cdot V_1\otimes \cdots \otimes V_n\right) & k=0,\\
D\left(V_0\otimes \cdots \otimes V_{k-1}\cdot V_k\otimes \cdots\otimes V_n\right) & 1\leq k\leq n,\\
D\left(V_0\otimes\cdots \otimes V_{n-1}\right)\cdot V_n & k = n+1.
\end{cases}
\]
The zeroth coboundary map of $\tilde{C}^\bullet$ is given by
\begin{equation}
\label{eqn:inner-cocycle-def}
            \mathcal{K}\cdot p_\eps \to \Hom_{\mathcal{B}}(p_\eps\cdot\mathcal{A},\mathcal{K}^0): \xi\mapsto \left[D_\xi: V\mapsto \counit(V)\xi-\xi\cdot V\right].
\end{equation}

In this picture, the 1-cocycles are precisely the maps $D\in\Hom_{\mathcal{B}}(p_\eps\cdot\mathcal{A},\mathcal{K}^0)$ that satisfy
\begin{equation}
\label{eqn:cocycle-relation}
D(V\cdot W)=D(V)\cdot W+\counit(V)D(W)
\end{equation}
for all $V\in p_\eps\cdot\mathcal{A}\cdot p_i$ and $W\in p_i\cdot\mathcal{A}\cdot p_j$.
We associate a cocycle $D_\xi$ to every vector $\xi\in\mathcal{K}\cdot p_\eps$ via \eqref{eqn:inner-cocycle-def}.
These are the \textit{inner} $1$-cocycles.

By analogy with the first $L^2$-Betti number for groups, we want to express how a 1-cocycle $D$ is determined by its values on a generating set of objects of $\mathcal{C}$. So, we first need to specify how $D$ can actually be evaluated on objects $\alpha\in\mathcal{C}$.

By the correspondence theorem from \cite{psv-cohom} discussed in Section~\ref{sec:representation-theory}, we may suppose that the right Hilbert $\cA$-module $\mathcal{K}$ arises from a unitary half braiding $(X,\sigma)\in\mathcal{Z}(\ind{\mathcal{C}})$, where $X \in \ind{\cC}$ satisfies $(X,i) = \cK \cdot p_i$ for all $i \in \Irr(\cC)$.

For every $\al \in \Irr(\cC)$, we consider the vector subspace $\cA_\al \subset \cA$
\begin{equation}\label{eq.subspace-A-al}
\cA_\al = \bigoplus_{i,j \in \Irr(\cC)} (i \al , \al j) \; .
\end{equation}
Note that each $\cA_\al$ is a $\cB$-bimodule. We can then define the natural bijection
$$\Hom_\cB(p_\eps \cdot \cA_\al , \cK^0) \cong (\al X, \al)$$
identifying $D \in \Hom_\cB(p_\eps \cdot \cA_\al , \cK^0)$ with $D_\al \in (\al X, \al)$ through the formulae
$$D(V) = (\Tr_\al \ot \id)(D_\al V) \quad\text{and}\quad D_\al = \sum_{j \in \Irr(\cC)} \sum_{W \in \onb(\al,\al j)} d(j) (1 \ot D(W))W^*$$
for all $i \in \Irr(\cC)$ and $V \in (\al,\al i)$.
Putting all $\al \in \Irr(\cC)$ together, we find a bijection
$$\Hom_\cB(p_\eps \cdot \cA , \cK^0) \cong \prod_{\al \in \Irr(\cC)} (\al X, \al)$$
identifying $D \in \Hom_\cB(p_\eps \cdot \cA , \cK^0)$ with the family $(D_\al)_{\al \in \Irr(\cC)}$.

Given a family of elements $D_\al \in (\al X, \al)$ for all $\al \in \Irr(\cC)$, we uniquely define $D_\beta \in (\beta X , \beta)$ for arbitrary objects $\beta \in \cC$ by the formula
$$D_\beta = \sum_{\al \in \Irr(\cC)} \sum_{V \in \onb(\al,\be)} d(\al) (V^* \ot 1) D_\al V \; .$$
Note that the naturality condition
\begin{equation}\label{eqn:dalpha-naturality}
D_\al V = (V \ot 1) D_\be
\end{equation}
holds for all $\al,\be \in \cC$ and all $V \in (\al,\be)$.

\begin{definition}
Let $\mathcal{C}$ be a rigid $C^*$-tensor category. We say that a subset $\cG \subset \Irr(\cC)$ \emph{generates} $\cC$ when every irreducible object in $\mathcal{C}$ arises as a subobject of some tensor product of elements in $\mathcal{G} \cup \overline{\mathcal{G}}$.
\end{definition}

The following proposition implies that a $1$-cocycle $D \in \Hom_\cB(p_\eps \cdot \cA, \cK^0)$ is completely determined by its ``values'' $D_\al \in (\al X,\al)$ for $\al$ belonging to a generating set $\cG \subset \Irr(\cC)$.

\begin{proposition}\label{prop:cocycle-dalpha-properties}
Consider a morphism $D\in\Hom_{\mathcal{B}}(p_\eps\cdot\mathcal{A},\mathcal{K}^0)$ with corresponding values $D_\al \in (\al X, \al)$, $\al \in \cC$. Then $D$ is a 1-cocycle if and only if
\begin{equation}
\label{eqn:cocycle-dalpha-charact}
D_{\alpha\beta}=(1 \otimes\sigma_\beta^*)(D_\alpha\otimes 1)+(1 \otimes D_\beta)
\end{equation}
for all $\alpha,\beta\in\mathcal{C}$. In particular, any 1-cocycle $D$ satisfies $D_\eps=0$ and
\begin{align}
D_{\overline{\alpha}}&=-\sigma^*_{\overline{\alpha}}(t_\alpha^*\otimes 1 \ot 1)(1 \otimes D_\alpha\otimes 1)(1 \otimes s_\alpha)\label{eqn:dalpha-adjoint-constraint}\\
&=-(1 \otimes s_\alpha^*\otimes 1)(1 \ot 1 \otimes\sigma_{\overline{\alpha}}^*)(1 \otimes D_\alpha\otimes 1)(t_\alpha\otimes 1) \notag
\end{align}
for all $\al \in \Irr(\cC)$.
\end{proposition}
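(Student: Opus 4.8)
The plan is to pin down \eqref{eqn:cocycle-relation} by testing it on the elementary products that span the tube algebra, and to show that the resulting condition is exactly \eqref{eqn:cocycle-dalpha-charact}. By bilinearity and the decomposition $\cA=\bigoplus_{i,j,\gamma}(i\gamma,\gamma j)$, it is enough to treat $V\in(\al,\al j)$ and $W\in(j\be,\be k)$ with $\al,\be,j,k\in\Irr(\cC)$; then $V\cdot W=(V\ot 1)(1\ot W)\in(\al\be,\al\be k)$ and, by the defining formula for $D_{\al\be}$, one has $D(V\cdot W)=(\Tr_{\al\be}\ot\id)\bigl(D_{\al\be}\cdot(V\cdot W)\bigr)$. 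Writing $R_{\al\be}:=(1\ot\sigma_\be^*)(D_\al\ot 1)+(1\ot D_\be)$ for the right-hand side of \eqref{eqn:cocycle-dalpha-charact}, the whole statement reduces to the identity
\begin{equation}\label{eq.key-identity}
D(V)\cdot W+\counit(V)\,D(W)=(\Tr_{\al\be}\ot\id)\bigl(R_{\al\be}\cdot(V\cdot W)\bigr),
\end{equation}
which holds for \emph{every} $D\in\Hom_\cB(p_\eps\cdot\cA,\cK^0)$, and which I would establish first.

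To prove \eqref{eq.key-identity} I would split $R_{\al\be}$ into its two summands and evaluate each after the partial trace, using $\Tr_{\al\be}=\Tr_\be\circ(\Tr_\al\ot\id)$ and the fact that the partial trace over the leading copy of $\al$ is a bimodule map, so it commutes with any morphism of the form $1_\al\ot(-)$. For the summand coming from $(1\ot\sigma_\be^*)(D_\al\ot 1)$, one has $(D_\al\ot 1)(V\ot 1)=(D_\al V)\ot 1$, and tracing out the leading $\al$ sends $(D_\al V)\ot 1_\be$ to $D(V)\ot 1_\be$ since $D(V)=(\Tr_\al\ot\id)(D_\al V)$; the outer $\Tr_\be$ then turns $\sigma_\be^*(D(V)\ot 1)W$ into $D(V)\cdot W$ by the module action formula \eqref{eqn:ind-obj-a-module-action}. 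For the summand coming from $(1\ot D_\be)$, the leading $\al$ is touched only by $V$, so tracing it out produces the factor $(\Tr_\al\ot\id)(V)\in(\eps,j)$; by irreducibility of $\al$ and $j$ this equals $\counit(V)\,1$ (it vanishes unless $j=\eps$ and equals $\Tr_\al(V)$ when $j=\eps$), and what is left is $\counit(V)\,(\Tr_\be\ot\id)(D_\be W)=\counit(V)\,D(W)$. Together these give \eqref{eq.key-identity}.

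With \eqref{eq.key-identity} in hand the equivalence follows quickly. If \eqref{eqn:cocycle-dalpha-charact} holds then $D_{\al\be}=R_{\al\be}$, so \eqref{eq.key-identity} and the definition of $D(V\cdot W)$ give \eqref{eqn:cocycle-relation}. Conversely, if $D$ is a $1$-cocycle then \eqref{eq.key-identity} yields $(\Tr_{\al\be}\ot\id)\bigl((D_{\al\be}-R_{\al\be})\cdot(V\cdot W)\bigr)=0$ for all admissible $V,W$. Here I would use that the products $(V\ot 1)(1\ot W)$ span $(\al\be,\al\be k)$ for every $k$ -- a completeness relation of the same flavour as \eqref{eqn:psv-lemma39-computation-2} -- so that, by nondegeneracy of the pairing underlying the identification $\Hom_\cB(p_\eps\cdot\cA_{\al\be},\cK^0)\cong(\al\be X,\al\be)$, we obtain $D_{\al\be}=R_{\al\be}$ for all irreducible $\al,\be$. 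The case of arbitrary objects $\al,\be\in\cC$ is then deduced by decomposing into irreducibles and invoking the naturality relation \eqref{eqn:dalpha-naturality} together with the naturality of the half braiding $\sigma$.

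Finally I would read off the stated consequences. Taking $\al=\be=\eps$ in \eqref{eqn:cocycle-dalpha-charact} and using $\sigma_\eps=1$ gives $D_\eps=D_\eps+D_\eps$, hence $D_\eps=0$. For \eqref{eqn:dalpha-adjoint-constraint}, apply \eqref{eqn:cocycle-dalpha-charact} to the pair $(\al,\overline{\al})$ and compose on the right with $s_\al$; naturality \eqref{eqn:dalpha-naturality} gives $D_{\al\overline{\al}}\,s_\al=(s_\al\ot 1)D_\eps=0$, whence $(1\ot D_{\overline{\al}})s_\al=-(1\ot\sigma_{\overline{\al}}^*)(D_\al\ot 1)s_\al$. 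Tensoring this on the left with $\overline{\al}$ and capping the two leading legs with $t_\al^*$ collapses the left-hand side to $D_{\overline{\al}}$ through the conjugate equation $(t_\al^*\ot 1)(1\ot s_\al)=1$, producing the first displayed formula; the second follows by the symmetric manipulation using $(s_\al^*\ot 1)(1\ot t_\al)=1$, and one checks the two expressions agree by naturality of $\sigma$. The main obstacle is the bookkeeping in \eqref{eq.key-identity} -- especially making the appearance of $\counit(V)$ precise in the second summand -- and verifying the completeness/spanning statement used in the forward implication; both are elementary but require care with the categorical-trace normalisations.
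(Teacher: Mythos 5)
Your proposal is correct and follows essentially the same route as the paper's proof: your ``key identity'' is precisely the sum of the paper's second and third partial-trace identities (with the first treated as the defining formula for $D_{\alpha\beta}$), your spanning claim for the products $(V\otimes 1)(1\otimes W)$ is the paper's Frobenius-reciprocity step, and your derivation of $D_\eps=0$ and of \eqref{eqn:dalpha-adjoint-constraint} via $D_{\alpha\overline{\alpha}}s_\alpha=0$ and the conjugate equations matches the paper's argument. The only cosmetic difference is organizational, so there is nothing substantive to add.
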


\begin{proof}
Choose arbitrary morphisms  $V\in (\alpha,\alpha i)$ and $W\in (i\beta,\beta j)$.
The following identities can be verified by direct computation:
\begin{align*}
D(V\cdot W) &= (\Tr_{\alpha\beta}\otimes\id)(D_{\alpha\beta}(V\otimes 1)(1 \otimes W)) \; ,\\
D(V)\cdot W &= (\Tr_{\alpha\beta}\otimes\id)\left((1 \otimes\sigma_\beta^*)(D_\alpha\otimes 1)(V\otimes 1)(1 \otimes W)\right) \; ,\\
\counit(V)D(W)&=(\Tr_{\alpha\beta}\otimes\id)((1 \otimes D_\beta)(V\otimes 1)(1 \otimes W)) \; .
\end{align*}
By Frobenius reciprocity, for every fixed $\al,\be, j \in \Irr(\cC)$, the linear span of all $(V \ot 1)(1 \ot W)$ with $i \in \Irr(\cC)$, $V \in (\al,\al i)$, $W \in (i \be, \be j)$ equals $(\al \be , \al \be j)$. So it follows that $D$ is a $1$-cocycle if and only if \eqref{eqn:cocycle-dalpha-charact} holds for all $\al,\be \in \cC$.

Finally, assume that $D$ is a $1$-cocycle. By \eqref{eqn:cocycle-dalpha-charact}, we get that $D_\eps=0$.
The naturality property of the $D_\alpha$ implies that $D_{\alpha\overline{\alpha}}s_\alpha=0$ for all $\alpha\in\mathcal{C}$. So,
\[
(1 \ot 1 \otimes D_{\overline{\alpha}})(1 \otimes s_\alpha)=-(1 \ot 1 \otimes\sigma^*_{\overline{\alpha}})(1 \otimes D_\alpha\otimes 1)(1 \otimes s_\alpha) \; ,
\]
which yields one half of \eqref{eqn:dalpha-adjoint-constraint} after multiplying by $(t_\alpha^*\otimes 1)$ on both sides.
The other identity is proven similarly, by observing that $(s_\alpha^*\otimes 1)D_{\alpha\overline{\alpha}}=0$.
\end{proof}

The following lemma shows that the constraint \eqref{eqn:dalpha-adjoint-constraint} on $D_\alpha$ can be succinctly restated in terms of the special unitaries $U_i\in\mathcal{A}$ introduced in the previous section.

\begin{lemma}\label{lem:cocycle-necessary-conditions}
Fix $\al \in \Irr(\cC)$ and consider $\cA_\al \subset \cA$ as in \eqref{eq.subspace-A-al}. Let $D \in \Hom_\cB(p_\eps \cdot \cA_\al,\cK^0)$ with corresponding $D_\al \in (\al X, \al)$. Then $D_\al$ satisfies the relation
\begin{equation}\label{eqn:dalpha-complicated}
\sigma^*_{\overline{\alpha}}(t_\alpha^* \otimes 1 \ot 1)(1 \otimes D_\al \otimes 1)(1 \otimes s_\alpha) =(1 \otimes s_\alpha^*\otimes 1)(1 \ot 1 \otimes\sigma_{\overline{\alpha}}^*)(1 \otimes D_\al \otimes 1)(t_\alpha\otimes 1)
\end{equation}
if and only if $D(V) = D(V) \cdot U_i$ for all $i \in \Irr(\cC)$ and $V \in (\al,\al i)$.
\end{lemma}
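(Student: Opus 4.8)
The plan is to use the bijection $\Hom_\cB(p_\eps\cdot\cA_\al,\cK^0)\cong(\al X,\al)$ to turn the tube-algebra condition into a single identity between two elements of $(\al X,\al)$, and then to recognize that identity as \eqref{eqn:dalpha-complicated}. First I would define $D' \in \Hom_\cB(p_\eps\cdot\cA_\al,\cK^0)$ by $D'(V) = D(V)\cdot U_i$ for $V\in(\al,\al i)$; this is well defined and $\cB$-linear because $U_i \in p_i\cdot\cA\cdot p_i$ and $D(V)\in\cK\cdot p_i$. Since $D\mapsto D_\al$ is a bijection, the condition ``$D(V)=D(V)\cdot U_i$ for all $i\in\Irr(\cC)$ and $V\in(\al,\al i)$'' is equivalent to the single equality $D_\al = D'_\al$ of the corresponding elements of $(\al X,\al)$.

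Next I would compute $D'_\al$ explicitly. Unfolding the module action \eqref{eqn:ind-obj-a-module-action} for the element $U_j$ (the identity of $(jj,jj)$) gives $D(W)\cdot U_j = (\Tr_j\ot\id)(\sigma_j^*(D(W)\ot 1))$ for $W\in(\al,\al j)$. Feeding this, together with $D(W)=(\Tr_\al\ot\id)(D_\al W)$, into the reconstruction formula $D'_\al=\sum_j\sum_{W\in\onb(\al,\al j)}d(j)\,(1\ot D'(W))\,W^*$ and carrying out the sum over $j$ and over $\onb(\al,\al j)$ via the standard completeness relation, I would obtain a closed expression for $D'_\al$ as a morphism built from $D_\al$, the solutions $s_\al,t_\al$ of the conjugate equations, and the half braiding. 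The individual $\sigma_j$ appearing here assemble, by the half braiding condition $\sigma_{YZ}=(\sigma_Y\ot 1)(1\ot\sigma_Z)$ and the naturality of $\sigma$, into the conjugate-side braiding $\sigma_{\overline{\al}}$ that features in \eqref{eqn:dalpha-complicated}.

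Finally I would compare $D_\al$ with the expression for $D'_\al$. Using the conjugate equations to move the caps and cups and the naturality of $\sigma$ to reposition the braiding, the equality $D_\al=D'_\al$ rearranges precisely into \eqref{eqn:dalpha-complicated}; equivalently, into the assertion that the two formulas for $-D_{\overline{\al}}$ in \eqref{eqn:dalpha-adjoint-constraint} coincide, which is the point of contact with Proposition \ref{prop:cocycle-dalpha-properties}. For the converse implication one contracts \eqref{eqn:dalpha-complicated} back against an arbitrary $V\in(\al,\al i)$ and runs the same partial-trace manipulations in reverse to recover $D(V)=D(V)\cdot U_i$.

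The step I expect to be the main obstacle is the explicit evaluation of $D'_\al$: one must correctly convert the half braidings $\sigma_j$, which are indexed by the \emph{target} objects $j$ of the morphisms $W\in(\al,\al j)$, into the single conjugate-side braiding $\sigma_{\overline{\al}}$, all while keeping track of the two distinct partial traces $\Tr_\al$ and $\Tr_j$ and of the placement of $s_\al$ and $t_\al$. I would organize this with string diagrams, the only genuinely categorical inputs being the half braiding property and the naturality of $\sigma$; the remaining steps reduce to the conjugate-equation and completeness identities already used in the proofs of Proposition \ref{prop:cocycle-dalpha-properties} and Lemma \ref{lem:psv-lemma39-generalisation}.
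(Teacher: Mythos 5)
Your plan is correct and is essentially the paper's own argument: the paper likewise unfolds $D(V)\cdot U_i$ via the module action \eqref{eqn:ind-obj-a-module-action}, uses the half braiding property and naturality of $\sigma$ together with the conjugate equations, and reduces the quantifier over $i$ and $V$ to a single morphism identity that an invertible transformation turns into \eqref{eqn:dalpha-complicated}. The only difference is bookkeeping: instead of assembling a second element $D'_\al \in (\al X,\al)$ through the orthonormal-basis reconstruction formula as you propose, the paper substitutes $V=(s_\al^*\ot 1)(1\ot W)$ by Frobenius reciprocity and compares the resulting coefficient morphisms of $W$.
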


\begin{proof}
By definition of the $\cA$-module structure on $\mathcal{K}$, for every $i \in \Irr(\cC)$ and $V \in (\al,\al i)$, we have
\begin{align*}
D(V)\cdot U_i &= (\Tr_\alpha\otimes\id)(D_\al V)\cdot U_i =(\Tr_{\alpha i} \otimes \id)((1 \otimes \sigma_i^*)(D_\al V\otimes 1))\\
&=(\Tr_\alpha\otimes\id)((V\otimes 1)(1 \otimes\sigma_i^*)(D_\al \otimes 1)) =(\Tr_\alpha\otimes\id)((V\otimes 1)\sigma_{\alpha i}^* (\sigma_\alpha D_\al\otimes 1))\\
&=(\Tr_\alpha\otimes\id)(\sigma_\alpha^*(1 \ot V)(\sigma_\alpha D_\al \otimes 1)) \; ,
\end{align*}
where the final two equalities follow from the half braiding property and the naturality of $\sigma$, respectively. Writing $V$ as $V=(s_\alpha^*\otimes 1)(1 \otimes W)$ with $W\in (\overline{\alpha}\alpha,i)$, we then find that
\begin{align*}
D(V)\cdot U_i &= (\Tr_\alpha\otimes\id)(\sigma_\alpha^*(1 \otimes s_\alpha^*\otimes 1)(1 \ot 1 \otimes W)(\sigma_\alpha D_\al \otimes 1)) \\
&= (\Tr_\alpha\otimes\id)(\sigma_\alpha^*(1 \otimes s_\alpha^*\otimes 1)(\sigma_\alpha D_\al \otimes 1 \ot 1))W \; .
\end{align*}
Since $D(V) = (\Tr_\al \ot \id)(D_\al V) = (t_\al^* \ot 1)(1 \ot D_\al) W$, we conclude that the equality $D(V) = D(V) \cdot U_i$ for all $i \in \Irr(\cC)$ and $V \in (\al,\al i)$ is equivalent with the equality
$$
(t_\alpha^* \otimes 1)(1 \otimes D_\al) =(t_\alpha^*\otimes 1)(1 \otimes\sigma_\alpha^*)(1 \ot 1 \otimes s_\alpha^*\otimes 1)(1 \otimes\sigma_\alpha D_\al \otimes 1 \ot 1)(t_\alpha\otimes 1 \ot 1) \; .$$
Applying the transformation $Y \mapsto \sigma_{\overline{\alpha}}^* (Y \ot 1)(1 \ot s_\al)$ to the left- and the right-hand side, this equality becomes equivalent with \eqref{eqn:dalpha-complicated}.
\end{proof}

We can then formalize how a $1$-cocycle is determined by its values on a generating set of a rigid $C^*$-tensor category as follows.

\begin{proposition}\label{prop:cocycle-upper-bound}
Let $\cC$ be a rigid $C^*$-tensor category with finite generating set $\cG \subset \Irr(\cC)$. Denote by $\cA$ the tube algebra of $\cC$ and let $\cK$ be a nondegenerate right Hilbert $\cA$-module. For every $i \in \Irr(\cC)$, define the subspace $\cKfix_i \subset \cK \cdot p_i$ given by
$$\cKfix_i := \{\xi \in \cK \cdot p_i \mid \xi \cdot U_i = \xi \} \; .$$
Define
\[
\tilde{Z}^1(\mathcal{C},\mathcal{K}^0)=\bigoplus_{\al \in \cG} \bigoplus_{i\in\Irr(\mathcal{C})} \cKfix_i \otimes (\alpha i,\alpha) \; .
\]
Then, the linear map
\[
\Phi:Z^1(\mathcal{C},\mathcal{K}^0)\to \tilde{Z}^1(\mathcal{C},\mathcal{K}^0): D\mapsto \bigoplus_{\al \in \cG} \bigoplus_{i\in\Irr(\mathcal{C})} \sum_{W\in\onb(\alpha,\alpha i)} D(W)\otimes W^*
\]
is injective. In particular, if $\cC$ has infinitely many irreducible objects, we find the estimate
\begin{equation}
\label{eqn:dimension-upper-bound}
\beta_1^{(2)}(\cC) \leq -1 + \sum_{\al \in \cG} \sum_{i\in\Irr(\mathcal{C})} \mult(i,\overline{\alpha}\alpha) \; \tau(q_i) \; ,
\end{equation}
where $q_i \in p_i \cdot \cA'' \cdot p_i$ denotes the projection onto the kernel of $U_i - p_i$.
\end{proposition}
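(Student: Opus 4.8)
The plan is to first check that $\Phi$ genuinely takes values in $\tilde Z^1(\mathcal{C},\mathcal{K}^0)$, then to prove injectivity by propagating the vanishing of $D$ from the generating set $\cG$ to all of $\Irr(\cC)$, and finally to read off the dimension bound by computing $\dim_{\cA''}\tilde Z^1(\mathcal{C},\mathcal{K}^0)$ for $\cK = L^2(\cA)$.

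For well-definedness, fix $\al\in\cG$ and $W\in\onb(\al,\al i)\subset p_\eps\cdot\cA$. Since $D$ is a $1$-cocycle, Proposition \ref{prop:cocycle-dalpha-properties} shows that its value $D_\al$ satisfies the adjoint constraint \eqref{eqn:dalpha-adjoint-constraint}; equating the two displayed expressions for $D_{\overline\al}$ and cancelling the sign yields exactly relation \eqref{eqn:dalpha-complicated} of Lemma \ref{lem:cocycle-necessary-conditions}. That lemma then gives $D(W)=D(W)\cdot U_i$, so $D(W)\in\cKfix_i$ and $\Phi(D)\in\tilde Z^1(\mathcal{C},\mathcal{K}^0)$. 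Moreover $\Phi$ is left $\cA''$-linear, since the left $\cA''$-action on a cocycle is postcomposition with left multiplication on $\cK=L^2(\cA)$ and hence commutes with $D\mapsto D(W)$.

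For injectivity, suppose $\Phi(D)=0$. As the $W^*$ for $W\in\onb(\al,\al i)$ are linearly independent, this forces $D(W)=0$ for all $\al\in\cG$, all $i$ and all such $W$, equivalently $D_\al=0$ for every $\al\in\cG$. The conjugate formula \eqref{eqn:dalpha-adjoint-constraint} expresses $D_{\overline\al}$ solely in terms of $D_\al$, so $D_{\overline\al}=0$ as well, and together with $D_\eps=0$ we see that $D$ vanishes on $\cG\cup\overline\cG\cup\{\eps\}$. The tensor formula \eqref{eqn:cocycle-dalpha-charact}, namely $D_{\al\be}=(1\ot\sigma_\be^*)(D_\al\ot 1)+(1\ot D_\be)$, then shows by induction on word length that $D_\beta=0$ for every object $\beta$ that is a tensor product of elements of $\cG\cup\overline\cG$. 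Finally, let $\gamma\in\Irr(\cC)$ be arbitrary. Since $\cG$ generates $\cC$, we may choose such a $\beta$ together with an isometry $W\colon\gamma\to\beta$, and the naturality relation \eqref{eqn:dalpha-naturality} gives $D_\gamma=(W^*\ot 1)D_\beta W=0$. Hence $D_\gamma=0$ for all $\gamma\in\Irr(\cC)$, so $D=0$ and $\Phi$ is injective.

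For the dimension estimate, take $\cK=L^2(\cA)$. Right multiplication by the unitary $U_i\in p_i\cdot\cA''\cdot p_i$ fixes exactly the vectors in $L^2(\cA)q_i$, where $q_i$ is the spectral projection of $U_i$ at the eigenvalue $1$, i.e.\ the kernel projection of $U_i-p_i$; thus $\cKfix_i=L^2(\cA)q_i$ and $\dim_{\cA''}\cKfix_i=\tau(q_i)$. Since $\dim_\C(\al i,\al)=\mult(i,\overline\al\al)$ by Frobenius reciprocity, additivity of the dimension function (Remark \ref{rem.dim-function-continuity-additivity}) gives $\dim_{\cA''}\tilde Z^1(\mathcal{C},\mathcal{K}^0)=\sum_{\al\in\cG}\sum_{i\in\Irr(\cC)}\mult(i,\overline\al\al)\,\tau(q_i)$, and the injective $\cA''$-linear map $\Phi$ bounds $\dim_{\cA''}Z^1(\mathcal{C},\mathcal{K}^0)$ by the same quantity. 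Writing $B^1$ for the inner cocycles, additivity over $0\recht B^1\recht Z^1\recht H^1\recht 0$ gives $\beta_1^{(2)}(\cC)=\dim_{\cA''}Z^1(\mathcal{C},\mathcal{K}^0)-\dim_{\cA''}B^1$. Because $\cC$ has infinitely many irreducibles, there is no nonzero invariant vector in $L^2(\cA)\cdot p_\eps$, so the inner cocycle map $\cK\cdot p_\eps\recht Z^1(\mathcal{C},\mathcal{K}^0)$ of \eqref{eqn:inner-cocycle-def} is injective and $\dim_{\cA''}B^1=\dim_{\cA''}(L^2(\cA)p_\eps)=\tau(p_\eps)=1$. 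Combining these yields estimate \eqref{eqn:dimension-upper-bound}. The two points requiring the most care are the propagation step—where one must verify that the tensor and conjugate formulas, combined with generation and naturality, truly pin down $D$ on every irreducible—and the identity $\dim_{\cA''}B^1=1$, which hinges on the vanishing of $\beta_0^{(2)}(\cC)$ for infinite $\cC$.
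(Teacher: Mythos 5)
Your proof is correct and follows essentially the same route as the paper: well-definedness and membership of $D(W)$ in $\cKfix_i$ via Proposition \ref{prop:cocycle-dalpha-properties} and Lemma \ref{lem:cocycle-necessary-conditions}, injectivity by propagating vanishing from $\cG$ through \eqref{eqn:dalpha-adjoint-constraint}, \eqref{eqn:cocycle-dalpha-charact} and naturality \eqref{eqn:dalpha-naturality}, and the bound from $\dim_{\cA''} L^2(\cA)\cdot q_i = \tau(q_i)$ together with the inner cocycles having dimension $1$ via $\beta_0^{(2)}(\cC)=0$. You merely spell out the propagation and dimension-counting details that the paper's proof leaves implicit by citing those earlier results.
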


\begin{proof}
By Proposition \ref{prop:cocycle-dalpha-properties} and Lemma \ref{lem:cocycle-necessary-conditions}, the map $\Phi$ is well defined and injective. In the case where $\cK = L^2(\cA)$, the map $\Phi$ is left $\cA''$-linear. Since $\dim_{\cA''} L^2(\cA) \cdot q_i = \tau(q_i)$, the proposition follows once we have proved that the space of inner $1$-cocycles has $\cA''$-dimension equal to $1$, assuming that $\cC$ has infinitely many irreducible objects.

In that case, $\beta_0^{(2)}(\cC) = 0$ by \cite[Corollary 9.2]{psv-cohom}, meaning that the coboundary map
$$L^2(\cA) \cdot p_\eps \recht \Hom_\cB(p_\eps \cdot \cA , L^2(\cA)^0)$$
is injective. The space of inner $1$-cocycles is thus isomorphic with $L^2(\cA) \cdot p_\eps$ and so, has $\cA''$-dimension equal to $1$.
\end{proof}

\section{\boldmath Derivations on $\Rep(A_u(F))$} \label{sec.derivations-Au}

In this section, we again specialize to the case of free unitary quantum groups. Let $F\in\mathrm{GL}_m(\CC)$. The methods of the previous section allow for a direct and explicit proof that $\beta^{(2)}_1(\Rep(A_u(F)))=1$. More generally, we determine the first cohomology of $\Rep(A_u(F))$ with arbitrary coefficients.

By \cite{banica-auf}, the category $\Rep(A_u(F))$ is freely generated by the fundamental representation $u$ and the irreducible representations can be labeled by words in $u$ and $\ubar$. To avoid confusion between words and tensor products, we explicitly write $\otimes$ to denote the tensor product of two representations. The tensor product $\ubar \ot u$ decomposes as the sum of the trivial representation $\eps$ and the irreducible representation with label $\ubar u$. Similarly, $u \ot \ubar \cong \eps \oplus u \ubar$. Moreover, the standard solutions of the conjugate equations for $u$, given by $t_u \in (\ubar \ot u,\eps)$ and $s_u \in (u \ot \ubar,\eps)$ generate all intertwiners between tensor products of $u$ and $\ubar$.

\begin{proposition}
\label{thm:free-cocycle-characterisation}
Let $F \in \GL_m(\C)$ and $\mathcal{C}=\Rep(A_u(F))$, with tube algebra $\cA$. Let $\cK$ be any nondegenerate right Hilbert $\cA$-module. Using the notation of Proposition \ref{prop:cocycle-upper-bound}, we find an isomorphism
\begin{equation}
\label{eqn:rep-au-cocycle-space}
Z^1(\Rep(A_u(F)),\mathcal{K}^0)\cong \cK \cdot p_\eps \oplus \cKfix_{\ubar u} \; .
\end{equation}
\end{proposition}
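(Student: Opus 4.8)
The plan is to apply Proposition \ref{prop:cocycle-upper-bound} with the one-element generating set $\cG = \{u\}$ and to show that the injective map $\Phi$ is in fact an isomorphism onto $\tilde Z^1(\cC,\cK^0)$. First I would unpack the target. Since $(ui,u) \cong (i,\ubar \otimes u)$ by Frobenius reciprocity \eqref{eq.frobenius-reciprocity-maps}, and $\ubar \otimes u = \eps \oplus \ubar u$ with both multiplicities equal to one, the only contributing indices are $i = \eps$ and $i = \ubar u$, the spaces $(u\eps,u)$ and $(u(\ubar u),u)$ being one-dimensional. As $U_\eps = p_\eps$ forces $\cKfix_\eps = \cK \cdot p_\eps$, this identifies $\tilde Z^1(\cC,\cK^0) \cong \cK \cdot p_\eps \oplus \cKfix_{\ubar u}$, the right-hand side of \eqref{eqn:rep-au-cocycle-space}. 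By Proposition \ref{prop:cocycle-upper-bound} the map $\Phi$ is injective, so the entire content is the surjectivity of $\Phi$.

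For surjectivity I would start from an arbitrary pair $(\xi,\eta) \in \cK \cdot p_\eps \oplus \cKfix_{\ubar u}$ and build a genuine $1$-cocycle $D$ with $\Phi(D) = (\xi,\eta)$. Using the Frobenius reciprocity isomorphism $(uX,u) \cong (X,\ubar \otimes u) \cong \cK \cdot p_\eps \oplus \cK \cdot p_{\ubar u}$, I define $D_u \in (uX,u)$ to be the element corresponding to $(\xi,\eta)$, noting that $\eta \in \cKfix_{\ubar u} \subset \cK \cdot p_{\ubar u}$. Because $\eta$ is fixed by $U_{\ubar u}$, Lemma \ref{lem:cocycle-necessary-conditions} guarantees that $D_u$ satisfies the symmetry relation \eqref{eqn:dalpha-complicated}, which is exactly what is needed for the two expressions in \eqref{eqn:dalpha-adjoint-constraint} to coincide; hence $D_{\ubar} \in (\ubar X, \ubar)$ is unambiguously defined from $D_u$.

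The heart of the argument is to extend the pair $D_u, D_{\ubar}$ to a natural family $(D_\al)_{\al \in \cC}$ satisfying the cocycle relation \eqref{eqn:cocycle-dalpha-charact} for all $\al,\be$. Since $\Rep(A_u(F))$ is freely generated by $u$, every object is a summand of a tensor product of copies of $u$ and $\ubar$, and every intertwiner is generated by $t_u$ and $s_u$. I would therefore define $D$ on tensor words in the letters $u,\ubar$ by iterating \eqref{eqn:cocycle-dalpha-charact}, and then extend to irreducible summands through the averaging formula preceding Proposition \ref{prop:cocycle-dalpha-properties} (which makes \eqref{eqn:dalpha-naturality} automatic). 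Independence of the bracketing in the iterated Leibniz expansion follows from the half braiding identity $\sigma_{Y \otimes Z} = (\sigma_Y \otimes 1)(1 \otimes \sigma_Z)$. Naturality then needs to be checked only against the generating morphisms $t_u,s_u$; the required relations $D_{u\ubar}\,s_u = 0$ and $(s_u^* \otimes 1)D_{u\ubar} = 0$ are precisely the two halves of \eqref{eqn:dalpha-adjoint-constraint} defining $D_{\ubar}$, so they hold by construction, while their mutual compatibility is the relation \eqref{eqn:dalpha-complicated} already secured in the previous step.

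The step I expect to be the main obstacle is exactly this extension: converting the freeness of $\Rep(A_u(F))$ into a clean verification that the iterated Leibniz rule descends to a \emph{well-defined} natural family on all irreducibles and satisfies \eqref{eqn:cocycle-dalpha-charact} universally, rather than merely on concatenations of letters where it holds tautologically. Once this is in place, Proposition \ref{prop:cocycle-dalpha-properties} certifies that $D$ is a genuine $1$-cocycle, and computing $\Phi(D)$ recovers $(\xi,\eta)$; thus $\Phi$ is bijective and \eqref{eqn:rep-au-cocycle-space} follows.
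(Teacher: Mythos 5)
Your proposal is correct and follows essentially the same route as the paper: realize $\cK$ via a unitary half braiding, identify the right-hand side of \eqref{eqn:rep-au-cocycle-space} with morphisms $D_u \in (u \otimes X, u)$ satisfying the condition of Lemma \ref{lem:cocycle-necessary-conditions}, get injectivity from Proposition \ref{prop:cocycle-upper-bound}, and prove surjectivity by extending $D_u$ through the forced formulas for $D_{\ubar}$ and for tensor words, then descending to irreducibles and reducing the naturality check, via Banica's description of the intertwiner spaces, to the vanishing relations involving $s_u$ and $t_u$ (the paper also records the analogous pair of relations for $D_{\ubar \otimes u}$ against $t_u$, which your sketch subsumes in the same argument). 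The paper's proof fills in exactly the extension step you flag as the main obstacle, in the way you describe.
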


\begin{proof}
As explained in Section \ref{sec:representation-theory}, we consider $\mathcal{K}$ as the nondegenerate right Hilbert $\mathcal{A}$-module given by a unitary half braiding $\sigma$ on some ind-object $X$.
A vector on the right-hand side of \eqref{eqn:rep-au-cocycle-space} then corresponds to an element in $(u\otimes X,u)$ satisfying the conditions of Lemma \ref{lem:cocycle-necessary-conditions}, by Frobenius reciprocity.
Fix such a morphism $D_u\in (u\otimes X,u)$.
We have to show that $D_u$ comes from a $1$-cocycle $D\in \Hom_{\mathcal{B}}(p_\eps\cdot\mathcal{A},\mathcal{K}^0)$, which we will construct as a family of morphisms $(D_\alpha)_{\alpha\in\mathcal{C}}$ satisfying the naturality condition \eqref{eqn:dalpha-naturality}.
The identity \eqref{eqn:dalpha-adjoint-constraint} forces us to define $D_{\overline{u}}$ by
\[
D_{\overline{u}}=-\sigma^*_{\overline{u}}(t_u^*\otimes 1 \ot 1)(1 \otimes D_u\otimes 1)(1 \otimes s_u) \; .
\]
This is unambiguous because $u\neq\overline{u}$ in $\Rep(A_u(F))$. By Lemma \ref{lem:cocycle-necessary-conditions}, we also have that
\[
D_{\overline{u}}=-(1 \otimes s_u^*\otimes 1)(1 \ot 1 \otimes\sigma_{\overline{u}}^*)(1 \otimes D_u\otimes 1)(t_u\otimes 1) \; .
\]
The cocycle identity \eqref{eqn:cocycle-dalpha-charact} imposes the definition
\begin{equation}
\label{eqn:rep-au-induced-cocycle-products}
D_{\alpha_1\otimes\cdots\otimes\alpha_n}=\sum_{k=1}^n (1^{\otimes k}\otimes\sigma_{\alpha_{k+1}\otimes\cdots\otimes\alpha_n}^*)(1^{\otimes (k-1)}\otimes D_{\alpha_k}\otimes 1^{\otimes (n-k)})
\end{equation}
where $\alpha_k\in\{u,\overline{u}\}$. We also must set $D_\eps=0$. Since every irreducible object in $\Rep(A_u(F))$ is a subobject of some tensor product of $u$ and $\overline{u}$, these relations fix $D_\alpha$ for all $\alpha\in\mathcal{C}$. Concretely, if $\alpha\in\Irr(\mathcal{C})$ and $w:\alpha_1\otimes\cdots\otimes\alpha_n\to \alpha$ is a co-isometry, where $\alpha_k\in\{u,\overline{u}\}$, we set
\begin{equation}
\label{eqn:rep-au-induced-cocycle-restriction}
D_\alpha=(w\otimes 1)D_{\alpha_1\otimes\cdots\otimes\alpha_n}w^*.
\end{equation}
Now, since $\alpha$ appears in the decomposition of several different tensor products, it is not immediately clear why this is well defined.
To this end, we will show that the naturality relation
\begin{equation}
\label{eqn:rep-au-induced-cocycle-naturality}
(V\otimes 1)D_{\alpha_1\otimes\cdots\otimes\alpha_n}=D_{\alpha_1'\otimes\cdots\otimes\alpha_m'}V
\end{equation}
holds for all morphisms $V:\alpha_1\otimes\cdots\otimes\alpha_n\to\alpha_1'\otimes\cdots\otimes\alpha_m'$, with $\alpha_i,\alpha_j'\in\{u,\overline{u}\}$.
This is where the freeness of $\mathcal{C}$ comes into play.
By \cite[Lemme~6]{banica-auf}, the intertwiner spaces between tensor products involving $u$ and $\overline{u}$ are generated by maps of the form $1^{\otimes i}\otimes s_u\otimes 1^{\otimes j}$ and $1^{\otimes i}\otimes t_u\otimes 1^{\otimes j}$, and their adjoints.
Appealing to the naturality of $\sigma$ in \eqref{eqn:rep-au-induced-cocycle-products}, it is therefore sufficient to verify that
\begin{align*}
&D_{u\otimes\overline{u}}s_u=0 &(s_u^*\otimes 1)D_{u\otimes\overline{u}}=0 \; ,\\
&D_{\overline{u}\otimes u}t_u=0 &(t_u^*\otimes 1)D_{\overline{u}\otimes u}=0 \; ,
\end{align*}
which follows from the two different expressions for $D_{\overline{u}}$, by retracing the computations made in the proof of Proposition \ref{prop:cocycle-dalpha-properties}. We conclude that there exists a unique $D\in\Hom_{\mathcal{B}}(p_\eps\cdot \mathcal{A},\mathcal{K}^0)$ producing the family of maps $(D_\alpha)_{\alpha\in\mathcal{C}}$.
This family satisfies the cocycle relation \eqref{eqn:cocycle-dalpha-charact} by construction.
Therefore $D$ is a 1-cocycle, as required.
\end{proof}

Combining Propositions \ref{prop:cocycle-upper-bound} and \ref{thm:free-cocycle-characterisation}, we get
\[
\beta^{(2)}_1(\Rep(A_u(F)))=\tau(q_{\overline{u}u}) \; .
\]
Calculating $\beta^{(2)}_1(\Rep(A_u(F)))$ therefore boils down to computing the trace of $q_{\overline{u}u}$. By von Neumann's mean ergodic theorem, we have that
\begin{equation}
\label{eqn:u-moments-dim}
\lim_{n\to\infty} \frac{1}{n}\sum_{k=0}^{n-1} \tau(U_{\overline{u}u}^k)=\tau(q_{\overline{u}u}).
\end{equation}
In other words, to find the first $L^2$-Betti number of $\Rep(A_u(F))$, it is now sufficient to compute the traces $\tau(U_{\overline{u}u}^k)$ for all $k\in\NN$, i.e.\@ the sequence of moments of $U_{\overline{u}u}$.

The following lemma translates this problem into a combinatorial one.
\begin{lemma}
\label{lem:u-moments-zeta}
Let $\mathcal{C}$ be an arbitrary rigid $C^*$-tensor category with tube algebra $\mathcal{A}$.
For $\alpha\in \mathcal{C}$ and $k\geq 1$, define the rotation map
\[
\zeta_\alpha^k: (\alpha^k,\eps)\to (\alpha^k,\eps):
\xi\mapsto (1^{\otimes k} \otimes s_\alpha^*)(1 \otimes\xi\otimes 1)s_\alpha
\]
Then $\tau(U_i^k)=\overline{\Tr(\zeta_i^k)}$ for all $i\in\Irr(\mathcal{C})$, where $\Tr$ is the unnormalized trace on the finite-dimensional matrix algebra of linear transformations of $(i^k,\eps)$.
\end{lemma}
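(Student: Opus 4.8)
The plan is to compute $U_i^k$ explicitly inside $\cA$, feed it into the trace formula, and recognize the resulting closed diagram as the complex conjugate of the categorical trace of the rotation map $\zeta_i^k$.

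\emph{Step 1: the shape of $U_i^k$.} Since $U_i \in (ii,ii)$ is the identity morphism of $i \ot i$, the multiplication rule $V \cdot W = (V \ot 1)(1 \ot W)$ gives at once, by induction on $k$, that $U_i^k$ is the identity morphism $\id_{i^{k+1}}$, now regarded as an element of the summand $(i\, i^k , i^k\, i)$ of $\cA$ whose (non-irreducible) middle object is $i^k = i^{\ot k}$.

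\emph{Step 2: applying the trace.} Because $\tau$ vanishes on every summand $(i'\al,\al j')$ with $\al \neq \eps$ and on the off-diagonal part where $i' \neq j'$, only the $\eps$-isotypic piece of $\id_{i^{k+1}}$ contributes. Expanding that piece through the embedding formula \eqref{eqn:general-intertw-in-tube-algebra} with $d(\eps)=1$ and using $\tau|_{(i,i)} = \Tr_i$, I obtain
$$\tau(U_i^k) = \sum_{W \in \onb(i^k,\eps)} \Tr_i\bigl((W^* \ot 1)(1 \ot W)\bigr) = \sum_{W \in \onb(i^k,\eps)} s_i^*\bigl((W^* \ot 1)(1 \ot W) \ot 1\bigr) s_i ,$$
where $W$ runs over an orthonormal basis of $(i^k,\eps)$ and $(W^* \ot 1)(1 \ot W)$ is a scalar endomorphism of $i$.

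\emph{Step 3: identification with the rotation trace.} Each summand is a closed string diagram built from $s_i$, $s_i^*$, $W$ and $W^*$; the two half-circles $s_i$ and $s_i^*$ glue the single free leg of $W$ onto the single free leg of $W^*$, so that $W^*$ is effectively paired with the one-step cyclic rotation of the legs of $W$. Summing over the orthonormal basis therefore yields the categorical trace of the one-step rotation operator. To match it with $\zeta_i^k$ precisely, I would compute $\Tr(\zeta_i^k) = \sum_W \langle \zeta_i^k(W), W\rangle = \sum_W W^* \zeta_i^k(W)$ using the inner product $\langle \xi,\eta\rangle = \eta^* \xi$ on $(i^k,\eps)$, take its adjoint (which for a scalar is complex conjugation), and check termwise that the resulting closed diagram is exactly the one appearing in Step 2. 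The adjoint/mirror reflection is precisely what produces the conjugate, giving $\tau(U_i^k) = \overline{\Tr(\zeta_i^k)}$.

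The main obstacle is the bookkeeping in Step 3: one must verify that the $s_i$--$s_i^*$ closure implements the one-step rotation used in $\zeta_i^k$ in the correct direction, and keep track of orientations so that the complex conjugate --- rather than the value itself --- emerges. Steps 1 and 2 are routine applications of the multiplication and trace formulas.
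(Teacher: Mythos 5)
Your overall strategy --- expand the $\eps$-component of $U_i^k=\id_{i^{k+1}}$ through the map \eqref{eqn:general-intertw-in-tube-algebra} and recognize the resulting closed diagrams as a rotation trace --- is exactly the paper's, and your Step 1 is correct. But Step 2 contains a concrete orientation error that is fatal to the plan. The formula \eqref{eqn:general-intertw-in-tube-algebra} sends $V\in(i\alpha,\alpha j)$ to $\sum_\gamma d(\gamma)\sum_{W\in\onb(\alpha,\gamma)}(1\ot W^*)V(W\ot 1)$; with $V=\id$, $\alpha=i^k$ and $\gamma=\eps$ this yields $\tau(U_i^k)=\sum_{W\in\onb(i^k,\eps)}\Tr_i\bigl((1\ot W^*)(W\ot 1)\bigr)$, whereas you wrote $(W^*\ot 1)(1\ot W)$, i.e.\ the mirror-image diagram. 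The two are not equal; rather, they are termwise complex conjugates of each other. Indeed, using $\Tr_i(T)=s_i^*(T\ot 1)s_i$ and the interchange law,
\begin{align*}
\Tr_i\bigl((1\ot W^*)(W\ot 1)\bigr) &= s_i^*(1\ot W^*\ot 1)\bigl(1^{\ot k}\ot s_i\bigr)W \;=\; \zeta_i^k(W)^*\,W \;=\; \langle W,\zeta_i^k(W)\rangle \; ,\\
\Tr_i\bigl((W^*\ot 1)(1\ot W)\bigr) &= W^*\bigl(1^{\ot k}\ot s_i^*\bigr)(1\ot W\ot 1)s_i \;=\; W^*\,\zeta_i^k(W) \;=\; \langle \zeta_i^k(W),W\rangle \; ,
\end{align*}
where the inner products are those of \eqref{eqn:morphism-space-innprod}. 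So the correct expansion sums to $\sum_W\langle W,\zeta_i^k(W)\rangle=\overline{\Tr(\zeta_i^k)}$, while your Step 2 expression sums to $\Tr(\zeta_i^k)$.

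This is not a harmless transposition, because $\Tr(\zeta_i^k)$ need not be real: the whole nontrivial content of the lemma, beyond the routine expansion, is precisely which of $\Tr(\zeta_i^k)$ and $\overline{\Tr(\zeta_i^k)}$ occurs. (For instance, in a pointed category $\cC=\mathrm{Vec}_G^\omega$ with nontrivial unitary associator, $\zeta_g^k$ acts on the one-dimensional space $(g^k,\eps)$ by a root of unity that can be non-real; correspondingly the spectral measure of the unitary $U_i$ need not be invariant under complex conjugation --- the symmetry visible in Proposition \ref{thm:u-moments-aof} is special to that example.) Consequently, the ``termwise check'' you defer to Step 3 cannot succeed as described: carried out honestly from your Step 2 formula, it produces $\tau(U_i^k)=\Tr(\zeta_i^k)$, not the claimed identity, and the diagram coming from $\overline{\Tr(\zeta_i^k)}$ is the reflection of yours, not equal to it. In effect your Step 2 already performs an unintended reflection, so the conjugation you plan to extract in Step 3 has been used up. The repair is simply to start from the correct expansion; then the first displayed line above, together with $\sum_W\langle W,\zeta_i^k(W)\rangle=\overline{\sum_W\langle \zeta_i^k(W),W\rangle}=\overline{\Tr(\zeta_i^k)}$, finishes the proof in three lines --- which is exactly the paper's argument.
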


\begin{proof}
Fix $i\in\Irr(\mathcal{C})$ and observe that
\begin{align*}
\tau(U_i^k) &= \sum_{W\in \onb(i^k,\,\eps)} \Tr_i\left((1 \otimes W^*)(W\otimes 1)\right) =\sum_{W\in\onb(i^k,\,\eps)} s_i^*(1 \otimes W^*\otimes 1)(W\otimes 1 \ot 1)s_i\\
&=\sum_{W\in\onb(i^k,\,\eps)} s_i^*(1 \otimes W^*\otimes 1)(1^{\otimes k} \otimes s_i)W =\sum_{W\in\onb(i^k,\,\eps)} \langle W,\zeta_i^k(W)\rangle =\overline{\Tr(\zeta_i^k)} \; .
\end{align*}
\end{proof}

\begin{proposition}\label{thm:u-moments-aof}
Consider either $\Rep(A_u(F))$ for $F \in \GL_m(\C)$ or $\Rep(A_o(F))$ for $F \in \GL_m(\C)$ with $F \overline{F} = \pm 1$. In both cases, denote by $u$ the fundamental representation and let $\pi$ be the nontrivial irreducible summand of $u\otimes\overline{u}$ or $\overline{u}\otimes u$. Then, for all $k \in \Z$, we have
\begin{equation}
\label{eqn:freeqg-2-moments}
\tau(U_{\pi}^k)=\begin{cases}
d(u)^2-1 & \;\;\text{if}\; k = 0 \; ,\\
0 & \;\;\text{if}\; |k| = 1 \; \\
1 & \;\;\text{if}\; |k| \geq 2 \; .
\end{cases}
\end{equation}
So, $\tau(q_\pi) = 1$ and the spectral measure of $U_\pi$ with respect to the (unnormalized) trace $\tau$ on $p_\pi \cdot \cA'' \cdot p_\pi$ is given by $\delta_1 + (d(u)^2 - 2 - 2 \Re(z))\, dz$, where $dz$ denotes the normalized Lebesgue measure on the unit circle $S^1$.
\end{proposition}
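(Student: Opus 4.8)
The plan is to reduce the whole statement to a fixed‑point count for the rotation operators $\zeta_\pi^k$ and then assemble the moments. For $k=0$ there is nothing diagrammatic to do: $U_\pi^0=p_\pi$, so $\tau(U_\pi^0)=\Tr_\pi(1)=d(\pi)=d(u)^2-1$, using $d(u)^2=d(u\otimes\overline{u})=1+d(\pi)$. For $k\geq 1$, Lemma \ref{lem:u-moments-zeta} gives $\tau(U_\pi^k)=\overline{\Tr(\zeta_\pi^k)}$, and since the number we shall compute is a nonnegative integer this equals $\Tr(\zeta_\pi^k)$. Negative powers are then covered by $\tau(U_\pi^{-k})=\overline{\tau(U_\pi^k)}=\tau(U_\pi^k)$, which accounts for the dependence on $|k|$ alone. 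Thus the entire content is the computation of $\Tr(\zeta_\pi^k)$ for $k\geq 1$.

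To compute $\Tr(\zeta_\pi^k)$ I would set up a diagram model for $(\pi^k,\eps)$. By \cite{banica-auf} and its orthogonal analogue, all intertwiners between tensor powers of $u$ and $\overline{u}$ are spanned by the maps built from $s_u,t_u$, so $(\eps,(u\overline{u})^k)$ (respectively $(\eps,u^{2k})$ in the orthogonal case) has as a basis the noncrossing pair partitions of $2k$ points; in the unitary case the alternating colour pattern $u\overline{u}u\overline{u}\cdots$ forces every noncrossing arc to join a $u$ to a $\overline{u}$, so the two cases carry identical combinatorics. Writing $p_\pi=1-d(u)^{-1}s_us_u^*$ for the projection onto $\pi$ inside each block, the subspace $(\pi^k,\eps)$ is the range of $p_\pi^{\otimes k}$, whose natural (generally non‑orthogonal) basis consists of exactly those noncrossing pairings containing no \emph{short} arc $\{2i-1,2i\}$ joining the two legs of a single block.

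Next I would identify $\zeta_\pi^k$ with the geometric cyclic rotation by one block, i.e.\ by two strands in the $u$‑picture. The condition ``no short arc'' is invariant under this rotation, so the subspace is preserved; the delicate point — and the main obstacle of the proof — is to show that on the diagram basis the rotation acts as an honest permutation with unit coefficients. Concretely, this means verifying that the standard solution $s_\pi$ defining $\zeta_\pi^k$ is compatible with $s_u,t_u$ and the isometric inclusion $\pi\hookrightarrow u\otimes\overline{u}$ so that rotating a $\pi$‑block produces the rotated $u$‑diagram with no spurious scalar, in particular no sign of Frobenius–Schur type; sphericity of the standard solutions is what guarantees this. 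Granting it, the matrix of $\zeta_\pi^k$ in the diagram basis is a permutation matrix, and since the trace is basis‑independent, $\Tr(\zeta_\pi^k)$ equals the number of diagrams fixed by the rotation.

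Finally I would count the fixed diagrams. A noncrossing pairing of $2k$ points on the circle that is invariant under rotation by two and avoids short arcs is unique: the orbit of the arc through point $1$ under rotation by two must itself be noncrossing, and a chord of circular length at least two crosses its own rotate, so that arc must join adjacent points; the only adjacent matching avoiding the short arcs $\{2i-1,2i\}$ is $\{2k,1\},\{2,3\},\{4,5\},\dots,\{2k-2,2k-1\}$. Hence there is exactly one fixed diagram for $k\geq 2$, none for $k=1$ (indeed $(\pi,\eps)=0$ as $\pi\neq\eps$), giving $\Tr(\zeta_\pi^k)=1$ for $k\geq 2$ and $0$ for $k=1$. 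Together with the $k=0$ value this yields \eqref{eqn:freeqg-2-moments}. The equality $\tau(q_\pi)=1$ then follows from the mean ergodic theorem exactly as in \eqref{eqn:u-moments-dim}, since all but finitely many of the averaged moments equal $1$; and the asserted spectral measure is identified by checking that the Fourier coefficients of $\delta_1+(d(u)^2-2-2\Re(z))\,dz$ reproduce the moments in \eqref{eqn:freeqg-2-moments}, this measure being positive precisely because $d(u)\geq 2$.
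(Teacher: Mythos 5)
Your strategy is the same as the paper's: reduce everything to $\Tr(\zeta_\pi^k)$ via Lemma \ref{lem:u-moments-zeta}, realize $(\pi^{\otimes k},\eps)$ in the Temperley--Lieb diagram calculus, show that the rotation permutes a diagram basis, count fixed points, and finish with the ergodic theorem and moment matching (all of which you handle correctly, including the fixed-point count and the positivity of the limit measure). But the step you explicitly ``grant'' is the heart of the matter, and your stated justification for it is wrong. Sphericity cannot be what makes the rotation act with unit coefficients: standard solutions are spherical in \emph{every} rigid $C^*$-tensor category, including those where Frobenius--Schur signs genuinely occur ($F\overline{F}=-1$), so sphericity alone cannot rule such signs out. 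The paper has to do two real pieces of work here. First, a reduction handling the sign of $q$: it shows that the full subcategory generated by $v\otimes v$ is independent of that sign, by identifying the relevant Hopf algebra with the quantum automorphism group of $(M_2(\C),\phi_{q^2})$, so that one may assume $q<0$, where cups and caps literally are standard solutions. Second, an actual computation: writing $Q:v\otimes v\to\pi$ for a co-isometry, one checks $s_\pi=(Q\otimes Q)s_{v\otimes v}$ and then, substituting $Q^*Q=1-d(v)^{-1}s_vs_v^*$ and observing that all terms containing $s_vs_v^*$ vanish, obtains $\zeta^k_\pi(Q^{\otimes k}\xi)=Q^{\otimes k}\zeta^k_{v\otimes v}(\xi)$. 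Without some such computation the ``no spurious scalar'' claim is simply unproven --- note that it would already fail if $s_\pi$ were replaced by a non-standard solution, so no soft argument can settle it.

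The second gap is your assertion that the no-short-arc diagrams form a basis of $(\pi^{\otimes k},\eps)$. Spanning is easy, since $Q^{\otimes k}$ maps $((v\otimes v)^{\otimes k},\eps)$ onto $(\pi^{\otimes k},\eps)$ and kills every diagram with a short arc; but linear independence of the images requires an argument, and you give none. The paper settles it by a dimension count: $\dim_\C(\pi^{\otimes k},\eps)$ equals the $k$-th Riordan number $\#NC_2^\circ(k)$, proved by transporting the fusion rules to the three-dimensional representation of $SU(2)$ and applying the Weyl integration formula. (A triangularity argument --- $p_\pi^{\otimes k}T_p\equiv T_p$ modulo the span of short-arc diagrams --- would also work, but it has to be said.) One point in your proposal is a genuine, and attractive, deviation from the paper: you treat $\Rep(A_u(F))$ directly, noting that in the alternating word $u\otimes\overline{u}\otimes u\otimes\cdots$ every noncrossing arc automatically joins a $u$ to a $\overline{u}$, so the combinatorics coincides with the orthogonal case; the paper instead embeds $\Rep(A_u(F))$ into $\Z * \Rep(A_o(F))$ via $u\mapsto zv$ and reduces to $A_o(F)$ outright. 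Your route would avoid that reduction, but it still needs linear independence of the colored diagrams and does not remove either of the two gaps above.
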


\begin{proof}
We first deduce the result for $A_u(F)$ from the $A_o(F)$ case. Up to monoidal equivalence, we may assume that $F \overline{F} = \pm 1$. Consider the group $\ZZ$ as a $C^*$-tensor category with generator $z$, and denote the fundamental representation of $A_o(F)$ by $v$. Write $\pi$ for the nontrivial irreducible summand of $v\otimes v$. We can embed $\Rep(A_u(F))$ into the free product $\ZZ * \Rep(A_o(F))$ as a full subcategory, by sending the fundamental representation $u$ to $zv$, see \cite[Th\'{e}or\`{e}me~1(iv)]{banica-auf}. Under this identification, we have that $\overline{u}\otimes u = v\otimes v$, which implies that also $\overline{u}u=\pi$. By mapping $u$ to $vz$ instead, we similarly get that $u\otimes\overline{u}=v\otimes v$.

So it remains to prove the proposition for $\Rep(A_o(F))$, where $F\in \mathrm{GL}_m(\CC)$ with $m\geq 2$ and $F\overline{F}=\pm 1$.
If we choose $q\in [-1,1]\setminus \{0\}$ such that
\[
\Tr(F^*F)=|q|+|q|^{-1}\geq 2\qquad \text{ and }\qquad F\overline{F}=-\operatorname{sgn}(q)1 \; ,
\]
then it follows from \cite[Theorem~5.3]{bdrv-06} that $A_o(F)$ is monoidally equivalent to $\mathrm{SU}_q(2)$.
We still denote the fundamental representation by $v$.

Note that, strictly speaking, the category $\Rep(\mathrm{SU}_q(2))$ depends on the sign of $q$.
However, since we only work in the subcategory generated by $v\otimes v$, all parity issues disappear.
More precisely, if $v'$ denotes the fundamental representation of $\mathrm{SU}_{-q}(2)$, then the full $C^*$-tensor subcategories generated by $v\otimes v$ and $v'\otimes v'$ are monoidally equivalent. To see this, denote the Hopf $*$-subalgebra of $\Pol(\mathrm{SU}_q(2))$ generated by the matrix coefficients of $v\otimes v$ by $B$.
It suffices to remark that in the same way as in \cite[Corollary~4.1]{banica-as}, the adjoint coaction of $\mathrm{SU}_q(2)$ on $M_2(\CC)$ identifies $B$ with the quantum automorphism group of $(M_2(\CC),\phi_{q^2})$, where the state $\phi_{q^2}$ is given by
\[
\phi_{q^2}: M_2(\CC)\to \CC:\left(\begin{matrix} a_{11} & a_{12} \\ a_{21} & a_{22}\end{matrix}\right)\mapsto \frac{1}{1+q^2}\left(a_{11}+q^2 a_{22}\right) \; .
\]
In particular, the isomorphism class of $B$ does not depend on the sign of $q$. By duality, the full $C^*$-tensor subcategory of $\Rep(\mathrm{SU}_q(2))$ generated by $v\otimes v$ is also independent of the sign of $q$. We may therefore assume that $q<0$ without loss of generality. Since $U_\pi$ is unitary, it suffices to compute $\tau(U_\pi^k)$ for all positive integers $k$.

In summary, we have reduced the problem to a question about the Temperley-Lieb-Jones category $\mathcal{TL}_{d,-1}$, where $d=|q|+|q|^{-1}$ (cf.\ \cite[\S~2.5]{neshveyev-tuset}).
This category admits a well-behaved diagram calculus, see e.g.\ \cite{banica-speicher}.
In this view, morphisms from $v^{\otimes n}$ to $v^{\otimes m}$ are given by linear combinations of non-crossing pair partitions $p\in NC_2(n,m)$, which we will represent by diagrams of the following form:
\[
\begin{tikzpicture}[baseline={([yshift=-0.5ex]current bounding box.center)},thick, scale=0.2, every circle node/.style={draw, fill=black!50, minimum width=2pt, inner sep=0pt}]
	 				\roundedbox{(5,4)}{2}{4}{$p$}
						\node[circle] (u-1) at (0,8) [draw] {};
						\node[circle] (u-2) at (2,8) [draw] {};
						\node[circle] (u-3) at (8,8) [draw] {};
						\node[circle] (u-4) at (10,8) [draw] {};
						\node at (5,8) {$\cdots$};
						\draw (u-1) -- (0,6);
						\draw (u-2) -- (2,6);
						\draw (u-3) -- (8,6);
						\draw (u-4) -- (10,6);
						\draw [decorate, decoration = {brace}] (0,9) -- (10,9)
						node [anchor=south,xshift=-2.5em,yshift=1ex] {$n$ points};
						\node[circle] (l-1) at (0,0) [draw] {};
						\node[circle] (l-2) at (2,0) [draw] {};
						\node[circle] (l-3) at (8,0) [draw] {};
						\node[circle] (l-4) at (10,0) [draw] {};
						\node at (5,0) {$\cdots$};
						\draw (l-1) -- (0,2);
						\draw (l-2) -- (2,2);
						\draw (l-3) -- (8,2);
						\draw (l-4) -- (10,2);
						\draw [decorate, decoration = {brace, mirror}] (0,-1) -- (10,-1)
						node [anchor=north,xshift=-2.5em,yshift=-1ex] {$m$ points};
					\end{tikzpicture} \; .
		\]
The composition $pq$ of diagrams $p$ and $q$, whenever meaningful, is defined by vertical concatenation, removing any loops that arise.
The tensor product and adjoint operations are given by horizontal concatenation and reflection along the horizontal axis, respectively.
We will denote the morphism in $(v^{\otimes m},v^{\otimes n})$ associated to the partition $p\in NC_2(n,m)$ by $T_p$.
One then has that $T_p=T_{p^*}$, $T_{p\otimes q}=T_p\otimes T_q$ and $T_pT_q=d^{\ell(p,q)}T_{pq}$, where $\ell(p,q)$ denotes the number of loops removed in the composition of $p$ and $q$.
Moreover, the family $\{T_p\mid p\in NC_2(n,m)\}$ is a basis for $(v^{\otimes m},v^{\otimes n})$.

In view of Lemma \ref{lem:u-moments-zeta}, we now specialize to non-crossing pair diagrams without upper points, i.e. morphisms in $((v\otimes v)^{\otimes k},\eps)$.
The action of $\zeta^k_{v\otimes v}$ (as defined in Lemma \ref{lem:u-moments-zeta}) on intertwiners of the form $T_p$ for $p\in NC_2(0,2k)$ has an easy description in terms of the partition calculus discussed above:
\[
\zeta^k_{v\otimes v}(T_p)=T_{\sigma_k(p)} \; ,
\]
where $\sigma_k$ is the permutation of $NC_2(0,2k)$ given by
\[
\sigma_k\left(
			\begin{tikzpicture}[baseline={([yshift=-0.5ex]current bounding box.center)},thick, scale=0.2, every circle node/.style={draw, fill=black!50, minimum width=2pt, inner sep=0pt}]
					\roundedbox{(5,4)}{2}{4}{$p$}
					\node[circle] (u-1) at (0,0) [draw] {};
					\node[circle] (u-2) at (2,0) [draw] {};
					\node[circle] (u-3) at (8,0) [draw] {};
					\node[circle] (u-4) at (10,0) [draw] {};
					\node at (5,0) {$\cdots$};
					\draw (u-1) -- (0,2);
					\draw (u-2) -- (2,2);
					\draw (u-3) -- (8,2);
					\draw (u-4) -- (10,2);
				\end{tikzpicture}\right)
			=
			\begin{tikzpicture}[baseline={([yshift=-0.85ex]current bounding box.center)},thick, scale=0.2, every circle node/.style={draw, fill=black!50, minimum width=2pt, inner sep=0pt}]
					\roundedbox{(5,4)}{2}{4}{$p$}
					\node[circle] (u-3new) at (-4,0) [draw] {};
					\node[circle] (u-4new) at (-2,0) [draw] {};
					\node[circle] (u-1) at (0,0) [draw] {};
					\node[circle] (u-2) at (2,0) [draw] {};
					\coordinate (u-3old) at (8,-1) [draw] {};
					\coordinate (u-4old) at (10,0) [draw] {};
					\coordinate (u-3b) at (8,1);
					\coordinate (u-4b) at (10,2);
					\node at (5,0) {$\cdots$};
					\draw (u-1) -- (0,2);
					\draw (u-2) -- (2,2);
					\draw (10,2) -- (u-4old) -- (12,0) -- (12,7) -- (-2,7) -- (u-4new);
					\draw (8,2) -- (u-3old) -- (13,-1) -- (13,8) -- (-4,8) -- (u-3new);
				\end{tikzpicture} \; .
		\]
In other words, $\zeta^k_{v\otimes v}$ permutes a basis of $((v\otimes v)^{\otimes k},\eps)$.
In fact, $\zeta_\pi^k$ behaves similarly with respect to a suitable basis of $(\pi^{\otimes k},\eps)$.
Let $Q:v\otimes v\to \pi$ be a co-isometry.
We proceed to argue that the intertwiners $\{Q^{\otimes k}T_p\mid p\in NC_2^\circ(k)\}$ form a basis of $(\pi^{\otimes k},\eps)$, where
\[
NC_2^\circ(k)=\{p\in NC_2(0,2k)\mid i\text{ odd}\implies\{i,i+1\}\notin p\} \; .
\]
Indeed, it is clear that multiplication by $Q^{\otimes k}$ yields a linear map from $((v\otimes v)^{\otimes k},\eps)$ to $(\pi^{\otimes k},\eps)$.
Moreover, it is easy to see that $T_p$ lies in the kernel of this map whenever $p\in NC_2(0,2k)\setminus NC_2^\circ(k)$.
Hence, to finish the proof of the claim, it suffices to check that
\begin{equation}
\label{eqn:dimension-expression}
\dim_{\CC} (\pi^{\otimes k},\eps) = \# NC_2^\circ(k).
\end{equation}
This fact is probably well known, but we give a short proof here for completeness.
The number of elements of $NC_2^\circ(k)$ is known in the combinatorial literature as the $k$-th Riordan number.
As shown in \cite[\S~3.2 (R2), \S~5]{bernhart-1999}, the Riordan numbers can be expressed in terms of the Catalan numbers $C_i$ by means of the formula
\begin{equation}
\label{eqn:riordan-identity}
\# NC_2^\circ(k)=\sum_{i=0}^k (-1)^{k-i}\binom{k}{i} C_i.
\end{equation}

The left-hand side of \eqref{eqn:dimension-expression} only depends on the fusion rules of the tensor powers of $\pi$, so we can take $\pi$ to be the three-dimensional irreducible representation of $SU(2)$ for the purposes of this part of the computation.
Making use of the Weyl integration formula for $SU(2)$, see \cite[Example~11.33]{hall-lie-reps}, we find that
\begin{align*}
\label{eqn:riordan-integral}
\dim_{\CC}(\pi^{\otimes k},\eps)&=\int_{SU(2)}\chi_\pi^k(g)\diff g=\frac{4}{\pi}\int_0^{\pi/2} (4\cos^2\theta-1)^k\sin^2\theta\diff\theta\\
&=\frac{4}{\pi}\int_0^1(4x^2-1)^k\sqrt{1-x^2}\diff x\\
&=\frac{4}{\pi}\sum_{i=0}^k 4^i (-1)^{k-i}\binom{k}{i} \int_0^1 x^{2i}\sqrt{1-x^2} \diff x \; .
\end{align*}
By the moment formula for the Wigner semicircle distribution, this is precisely \eqref{eqn:riordan-identity}.

Having shown that the intertwiners of the form $Q^{\otimes k}T_p$ form a basis of $(\pi^{\otimes k},\eps)$, we now demonstrate that $\zeta_\pi^k$ acts on this basis by permutation.
To this end, observe that $s_\pi=(Q\otimes Q)s_{v\otimes v}$.
For $\xi\in ((v\otimes v)^{\otimes k},\eps)$, this yields
\begin{align*}
\zeta^k_{\pi}(Q^{\otimes k}\xi)
&=Q^{\otimes k}(1^{\otimes 2k} \otimes s_\pi^*)(1 \otimes\xi\otimes 1)s_\pi\\
&=Q^{\otimes k}(1^{\otimes 2k}\otimes s_{v\otimes v}^*)(1^{\otimes 2k}\otimes Q^*Q\otimes Q^*Q)(1^{\otimes 2}\otimes \xi\otimes 1^{\otimes 2})s_{v\otimes v}\\
&=Q^{\otimes k}\zeta^k_{v\otimes v}(\xi) \; .
\end{align*}
where the last equality follows by substituting $Q^*Q=1-d(v)^{-1}s_vs_v^*$ and noting that all terms involving $s_vs_v^*$ vanish.
In summary,
\[
\zeta_\pi^k(Q^{\otimes k}T_p)=Q^{\otimes k}T_{\sigma_k(p)}
\]
for all $p\in NC_2^\circ(k)$. So $\zeta^k_\pi$ permutes a basis of $(\pi^{\otimes k},\eps)$, as claimed.
It follows that the trace of $\zeta^k_\pi$ is exactly the number of fixed points of $\sigma_k$ that lie in $NC_2^\circ(k)$.
When $k=1$, this set is empty, but for all $k\geq 2$ there is a unique such fixed point, given by the partition
\[
\begin{tikzpicture}[baseline={([yshift=1.2ex]current bounding box.center)},thick, scale=0.2, every circle node/.style={draw, fill=black!50, minimum width=2pt, inner sep=0pt}]

				\node[circle] (outer1) at (0,0) [draw] {};
				\node[circle] (pair1-1) at (2,0) [draw] {};
				\node[circle] (pair1-2) at (4,0) [draw] {};
				\node[circle] (pair2-1) at (10,0) [draw] {};
				\node[circle] (pair2-2) at (12,0) [draw] {};
				\node[circle] (outer2) at (14,0) [draw] {};
				\node (dots) at (7,0) {$\cdots$};
				\coordinate (outer1b) at (0,2);
				\coordinate (outer2b) at (14,2);
				\coordinate (pair1-1b) at (2,1);
				\coordinate (pair1-2b) at (4,1);
				\coordinate (pair2-1b) at (10,1);
				\coordinate (pair2-2b) at (12,1);
				\coordinate (brace1) at (2,-1);
				\coordinate (brace2) at (12,-1);
				\draw (outer1) -- (outer1b) -- (outer2b) -- (outer2);
				\draw (pair1-1) -- (pair1-1b) -- (pair1-2b) -- (pair1-2);
				\draw (pair2-1) -- (pair2-1b) -- (pair2-2b) -- (pair2-2);
				\draw [decorate, decoration = {brace, mirror}] (brace1) -- (brace2)
					node [anchor=north,xshift=-2.5em,yshift=-1ex] {$k-1$ pairs};
			\end{tikzpicture} \; .
		\]
Since
$$\lim_{n\to\infty} \frac{1}{n}\sum_{k=0}^{n-1} \tau(U_\pi^k) = 1 \; ,$$
we conclude that $\tau(q_\pi) = 1$. Clearly, the measure on $S^1$ in the formulation of the proposition has the same moments as $U_\pi$ and thus is the spectral measure of $U_\pi$.
\end{proof}

\begin{remark}
From the computation for $A_o(F)$, one might be tempted to conjecture that the trace of the spectral projection $q_i$ is always less than 1 for all $i\in\Irr(\mathcal{C})$ in any $C^*$-tensor category.
However, this is not the case.
Consider the category of finite-dimensional unitary representations of the alternating group $A_4$.
This category has four equivalence classes of irreducible objects, which we will denote by $\eps, \omega_1,\omega_2$ and $\pi$.
The trivial representation corresponds to $\eps$, and $\omega_1,\omega_2$ are one-dimensional representations that can be thought of as ``cube roots of $\eps$'', in that $\omega_1=\overline{\omega_2}$, and $\omega_1\otimes\omega_1=\omega_2$.
The remaining representation $\pi$ is 3-dimensional, and satisfies
\[
\pi\otimes \pi\cong\eps\oplus\omega_1\oplus\omega_2\oplus\pi\oplus\pi \; .
\]
Fix a partition of the identity into pairwise orthogonal projections
\[
1_{\pi\otimes\pi}=P_\eps+P_{\omega_1\oplus\omega_2}+P_{\pi\oplus\pi}
\]
such that the image of $P_\alpha$ is isomorphic to $\alpha$.
Using numerical methods, we found that
\begin{align*}
q_\pi &=\frac{7}{18}p_\pi\oplus \frac{1}{18} 1_\pi\oplus\frac{1}{18}1_\pi\oplus\left(\frac{7}{6}P_\eps+\frac{1}{6}P_{\omega_1\oplus\omega_2}+\frac{1}{3}P_{\pi\oplus\pi}\right)\\
&\qquad \in (\pi\eps,\eps\pi)\oplus(\pi\omega_1,\omega_1\pi)\oplus(\pi\omega_2,\omega_2\pi)\oplus (\pi\pi,\pi\pi)=p_\pi\cdot \mathcal{A}\cdot p_\pi \; .
\end{align*}
In particular,
\[
\tau(q_\pi)=d(\pi)7/18=7/6 > 1 \; .
\]
\end{remark}

\end{document}